\renewcommand{\ALG@beginalgorithmic}{\normalsize}
\xpatchcmd{\algorithmic}{\itemsep\z@}{\itemsep=2ex plus2pt}{}{}
\newlist{casesp}{enumerate}{3} 
\setlist[casesp]{align=left, 
                 listparindent=\parindent, 
                 parsep=\parskip, 
                 font=\normalfont\bfseries, 
                 leftmargin=0pt, 
                 labelwidth=0pt, 
                 itemindent=.4em,labelsep=.4em, 
                 partopsep=0pt, 
                 }
\setlist[casesp,1]{label=Case~\arabic*:,ref=\arabic*}
\setlist[casesp,2]{label=Case~\thecasespi.\roman*:,ref=\thecasespi.\roman*}
\setlist[casesp,3]{label=Case~\thecasespii.\alph*:,ref=\thecasespii.\alph*}
\newtheorem{lemma}{Lemma}
\newtheorem{proposition}{Proposition}
\newtheorem{remark}{Remark}
\newtheorem{definition}{Definition}
\newtheorem{assumption}{Assumption}
\newtheorem{theorem}{Theorem}
\newtheorem{corollary}{Corollary}
\newtheorem{example}{Example}
\renewcommand{\Re}{\mathbb{R}}
\DeclareMathOperator{\conv}{conv}
\DeclareMathOperator{\extpt}{vert}
\DeclareMathOperator{\extray}{extray}
\DeclareMathOperator{\DBP}{DBP}
\DeclareMathOperator{\D}{D}
\DeclareMathOperator{\DD}{DD}
\DeclareMathOperator{\DDR}{DDR}
\DeclareMathOperator{\opt}{opt}
\DeclareMathOperator{\sign}{sign}
\DeclareMathOperator{\AC}{AC}
\DeclareMathOperator{\ACD}{ACD}
\DeclareMathOperator{\ACRLT}{ACRLT}
\DeclareMathOperator{\DDP}{DDP}
\DeclareMathOperator{\CM}{CM}
\DeclareMathOperator{\closure}{cl}
\DeclareMathOperator{\CE}{CE}
\DeclareMathOperator{\Trace}{Trace}
\DeclareMathOperator{\FD}{FD}
\DeclareMathOperator{\ri}{ri}
\DeclareMathOperator{\Id}{Id}
\DeclareMathOperator{\DE}{DE}
\DeclareMathOperator{\FDR}{FDR}
\newcommand{\K}{{\bar{K}}}
\newcommand{\ftilde}{{\tilde{f}}}
\newcommand{\cl}{\mathord{:}}
\newcommand{\Z}{\mathbb{Z}}
\newcommand{\lin}{\mathop{\text{lin}}\nolimits}
\newcommand{\A}{{\bar{A}}}
\newcommand{\starred}[1]{\accentset{\star}{#1}}
\newcommand{\V}{\mathcal{V}}
\newcommand{\x}{\boldsymbol{x}}
\newcommand{\myratio}[2]{\genfrac{}{}{0pt}{}{#1}{#2}}
\begin{document}

\iftoggle{createarxiv}{}{%
\journalname{Math. Program., Ser. B}

\title{New finite relaxation hierarchies for concavo-convex, disjoint bilinear programs, and facial disjunctions}
\titlerunning{Finite Relaxation Hierarchy for Disjoint Bilinear and Facial Disjunctions}
\author{Mohit Tawarmalani}
        
\institute{Mohit Tawarmalani\\
\email{mtawarma@purdue.edu}\\
Mitch Daniels School of Business,\\ Purdue University, 100 S. Grant Street, West Lafayette, IN 47907-2076, USA}


\date{Received: date / Accepted: date}
}

\iftoggle{createarxiv}{
\title{New finite relaxation hierarchies for concavo-convex, disjoint bilinear programs, and facial disjunctions}
\author{ \href{https://orcid.org/0000-0003-3085-0084}{\includegraphics[scale=0.06]{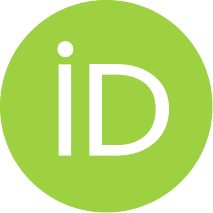}\hspace{1mm}Mohit Tawarmalani} \\
	Mitch Daniels School of Business\\
	Purdue University\\
	West Lafayette, IN 47907\\
	\texttt{mtawarma@purdue.edu}
}
\renewcommand{\shorttitle}{Finite Relaxation Hierarchy for Disjoint Bilinear and Facial Disjunctions}
\date{\today}}{}

\maketitle

\begin{abstract}
This paper introduces novel relaxation hierarchies for concavo-convex programs (CXP), a class of problems that includes disjoint bilinear programming (DBP) and concave minimization (CM) as special cases. At the core of these hierarchies is an algorithm  based on double-description (DD) that computes the barycentric coordinates of a polyhedral cone as rational, non-negative functions representing multipliers associated with the cone's rays. These hierarchies combine geometric structure derived from barycentric coordinates with algebraic techniques via rational functions, achieving the convex hull in $m$ iterations, where $m$ is the number of inequalities that a subset of the variables must satisfy. Our framework offers the first unified approach to analyze and tighten relaxations from disjunctive programming (DP) and reformulation-linearization technique (RLT) for CXP. We also demonstrate that our methods extend to facial disjunctive programs (FDP), where solutions are constrained to lie on faces of a Cartesian product of polytopes, generalizing known hierarchies for 0-1 programs.
    \keywords{Disjoint bilinear program \and Concave Minimization \and Convexification \and Double Description \and Barycentric Coordinates \and Reformulation Linearization Technique \and Facial Disjunctions}
    \iftoggle{createarxiv}{}{\subclass{65K05 \and 90C26 \and 90C11 \and 90C57}}
\end{abstract}

\section{Introduction}
This paper focuses primarily on concavo-convex programs (CXP):
\begin{equation}
  \nonumber
  (\D) \quad \opt_{\D} = \min \left\{ f(x,y)\;\middle|\; y\in C, x\in P\right\},
  \label{eq:defineD}
\end{equation}
where $C$ is a compact convex set in $\Re^{n_y}$, $P$ is the polytope $\{x\in \Re^n\mid Ax\le b\}$, $A\in \Re^{m\times n}$, $b\in \Re^m$, and $f:\Re^{n\times n_y}\rightarrow \Re$ is a continuous concavo-convex function, \textit{i.e.},\/ $f$ is concave in $x$ for a fixed $y$ and convex in $y$ for a fixed $x$. We also assume that, for any $x$, $f(x,\cdot)$ is proper (not identically $+\infty$ and never $-\infty$). Since $f$ is continuous, and $P$, $C$ are compact, the minimum of $(D)$ is achieved.
By viewing the objective as a minimum of concave functions, one for each $y\in C$, the problem reduces to concave minimization in $x$. Consequently, there exists an optimal solution of $(D)$ where $x$ is a vertex of $P$. This follows by recognizing that minimum of concave functions is concave and thus the minimizer over $P$ can be chosen to be a vertex (\textit{cf}, Theorem 1.19 in \cite{horst2000introduction}).

We also briefly consider convex relaxations for facial disjunctive sets:
\begin{equation}
  \nonumber
      \FD = \left\{(x,y)\;\middle|\; Bx + Dy \le_\K c, x_i\in \bigcup_{j=1}^{j_i} F_{ij}, i=1,\ldots,n_p\right\},
  \label{eq:defineFD}
\end{equation}
where $x_i\in \Re^{n_i}$, $F_{i1},\ldots,F_{ij_i}$ are disjoint faces of a polytope $P_i$,
$\K$ is a convex cone in $\Re^{m_k}$, $B\in \Re^{m_k\times n}$, $D\in \Re^{m_k\times n_y}$, and $c\in \Re^{m_k}$. The facial disjunctive program (FDP) optimizes a linear function over $\FD$.

These problems have widespread applications in mathematical optimization. For example, $(\D)$ includes, as a special case, the following affine-convex program:
\begin{equation}
  \nonumber
    (\AC) \quad \opt_{\AC} = \min\left\{g_0(y) + \sum_{i=1}^n x_ig_i(y) \;\middle|\;  y\in C, x\in P\right\},
  \label{eq:defineAC}
 \end{equation}
where each $g_i:\Re^{n_y}\rightarrow \Re$ is a convex function and, if $g_i(\cdot)$ is not affine, then $P\subseteq \{x\mid x_i\ge 0\}$. A subclass of $(\AC)$ includes disjoint bilinear programs:
 \begin{equation}
  \nonumber
   (\DBP)\quad \min\bigl\{x^\intercal Qy\mid y\in P^y, x\in P\bigr\}
   \label{eq:defineDBP}
 \end{equation}
where $Q\in \Re^n\times\Re^{n_y}$, and $P$ as well as $P^y$ are polytopes. We write $P^y = \{y\mid A^yy\le b^y\}$. To see $(\DBP)$ as a special case of $(\AC)$, set $C=P^y$, $g_0(x) = 0$, and $g_i(y) = Q_{i,\cl{}} y$, where $Q_{i,\cl{}}$ is the $i^{\text{th}}$ row of $Q$. Disjoint bilinear programs (DBP) have various applications in economics and engineering, including location-allocation \cite{adams1993mixed, lara2016global}, robust network design \cite{chang2017robust}, 
constrained bimatrix games \cite{konno1971bilinear}, and production planning \cite{as2011bilinear, bruni2018computational, chu2013integration}. More generally, $(\AC)$ has applications in optimal control of discrete-time hybrid dynamical systems~\cite{marcucci2024shortest}. The following concave minimization problem is also a subclass of $(\D)$:
 \begin{equation}
  \nonumber
  (\CM)\quad \min \bigl\{c(x) \mid x\in P\bigr\},
  \label{eq:defineCM}
 \end{equation}
where $c:\Re^n\rightarrow \Re$ is a concave function. To see this, let $C=\{0\}$ and define $c(x) = f(x,0)$. Finally, FDP includes 0-1 linear optimization problems, as seen by setting $n_p = n$, $P_i = [0,1]$, $K = \Re^{m_k}_+$, and $\left(F_{i1},F_{i2}\right) = \left(\{0\},\{1\}\right)$.
 
Relaxation techniques for each of these problem classes have been studied extensively. Two seminal relaxation schemes related to this paper are disjunctive programming (DP) \cite{balas2018disjunctive} and the reformulation linearization technique \cite{sherali2013reformulation}. Both were initially developed to solve 0-1 programming problems \cite{balas1985disjunctive,sherali1990hierarchy} where they are able to compute the convex hull in finite steps. These approaches have been found to be closely related \cite{balas1993lift} and have since been generalized to apply to a wide range of problem classes. While DP and RLT share similarities, they differ fundamentally in their underlying principles: DP leverages geometric structure, whereas RLT is based on algebraic manipulation. This distinction is particularly evident in their application to mixed-integer nonlinear programming. DP-based methods focus on exploiting geometric properties~\cite{stubbs1999branch,ceria1999convex,ruiz2012hierarchy}, while RLT extensions rely on the inherent algebraic structure of the problem~\cite{Sherali1992,lasserre2002semidefinite}. Although both approaches effectively capture the geometric structure of 0-1 programs, their distinct applications to continuous optimization problems remain largely unexplored, and the relationship between DP and RLT-based methods in this context is still unclear.
 
Consider the case of disjoint bilinear programs (DBP). The optimal solution to DBP can be found by restricting attention to a finite set of solutions consisting of the vertices of $P$ and $P^y$. Disjunctive programming (DP) has been used in this context to develop cutting planes \cite{sherali1980finitely}. On the other hand, RLT has been found to provide the most numerically performant relaxation \cite{Sherali1992,chang2017robust,zhen2022disjoint,marcucci2024shortest}. Recently, new insights have emerged into relaxations for DBP based on adjustable robust optimization (ARO) \cite{chang2017robust,xu2018copositive,zhen2018adjustable,zhen2022disjoint} and by exploiting semidefinite programming \cite{kellner2016semidefinite}. DP, ARO \cite{zhen2018adjustable}, and the semidefinite hierarchy \cite{kellner2016semidefinite} terminate finitely with the convex hull. Unlike DP, RLT does not restrict attention to faces of $P$ or $P^y$, and it is not known to terminate with the convex hull in finite steps. Finiteness in ARO is achieved by removing variables \cite{zhen2018adjustable}, while relationships to specific forms of RLT have been derived by restricting to special types of policies \cite{chang2017robust,ardestani2021linearized,chandra2022probability}. ARO-based relaxations improve slowly \cite{zhen2022disjoint}, and while RLT-based relaxations are numerically stronger at lower levels, they only converge asymptotically \cite{handelman1988representing}. The semidefinite hierarchy requires solving large-scale semidefinite programs, but finite termination is guaranteed under certain technical conditions \cite{kellner2016semidefinite}.

There is also a significant body of literature on solving (CM) \cite{benson1995concave}. Concave minimization is amenable to disjunctive arguments \cite{tuy1964concave}. For the special case of minimizing a finite number of linear functions, \cite{zhen2022disjoint} reduces the problem to a disjoint bilinear program. However, no generally applicable RLT-based schemes are available for relaxing (CM). Despite the extensive research on relaxations for DBP and CM, a key gap in the literature is the lack of a unified theoretical framework that simultaneously captures the geometric and algebraic structures of these problems while ensuring finite termination. This motivates the development of a new finite hierarchy that integrates the strengths of multiple approaches, offers a cohesive relaxation strategy, and reveals structural relationships between existing hierarchies. 

In this paper, we propose new finite hierarchies of relaxations for $(\D)$ and $\FD$, addressing the challenges highlighted in the discussion of DBP and CM. These hierarchies are obtained by constructing outer-approximations of $P$ using the double-description (DD) algorithm \cite{motzkin1953double,Fukuda1995,padberg2013linear}. In related work, insights from the DD algorithm and relaxations developed here have been leveraged recently to design a finite simplicial branch-and-bound algorithm for $(\D)$ with encouraging computational performance \cite{oh2022algorithms, oh2024branchandbound}. For (CM), our relaxations encode the steps of the SUM algorithm proposed in \cite{falk1976successive} while deriving additional strength from the algebraic representation of barycentric coordinates. These coordinates, derived using the DD algorithm, are non-negative rational functions with broad applications in computer-aided design \cite{warren1996barycentric} and convexification \cite{tawarmalani2010inclusion}. Prior work has shown that polynomial functions do not suffice, and rational functions are required to express barycentric coordinates \cite{wachspress2016rational}. Our construction computes these rational functions for general polyhedra, tackling an existing challenge in computational geometry. Using these functions, we propose a new hierarchy that inherits the strengths of both DP and RLT-based hierarchies. The barycentric coordinates $\lambda_v(x)$ satisfy the linear precision property, expressing $x$ as $\sum_{v \in \extpt(P)} \lambda_v(x) v$, where $\extpt(P)$ is the set of vertices of $P$. These equalities are central to DP and will also play a key role in our hierarchy. Our approach tightens these equalities by introducing a new affine relationship between successive barycentric coordinate iterates. Additionally, we leverage fractional optimization techniques \cite{he2023convexification} to generalize RLT inequalities, inheriting the strengths of both DP and RLT. For $\FD$, we construct a relaxation hierarchy using barycentric coordinates, their multiplicative property over the Cartesian product of polytopes, and disjunctive programming, ensuring recovery of the convex hull. In the special case of 0-1 programs, this hierarchy reduces to the corresponding RLT relaxation.

\paragraph{Notation:} We write $(X; Y)$ (resp. $(X,Y)$) to stack matrices $X$ and $Y$ along the rows (resp. columns), where the number of columns (resp. rows) of $X$ and $Y$ match. Vectors are assumed to be column vectors. We will denote the $i^{\text{th}}$ principal vector in $\Re^q$ as $e_{q;i}$ while $e_{q;:}$ will denote the vector of all ones. We will drop the subscript $q$ when it is apparent from the context. Submatrices of a matrix $X$ will be denoted as $X_{I,J}$ where $I$ and $J$ are appropriately defined subsets. We denote the list of indices $[i,i+1,\ldots,j]$ as $[i\cl{}j]$ and drop $i$ or $j$ if they are the first or last indices. Similarly, we use $\{i\cl{}j\}$ to denote the set of indices $\{i,i+1,\ldots,j\}$. For a transposed matrix, the transpose is taken before indices are selected. The identity matrix (resp. zero matrix) of dimension $n\times n$ ($m\times n$) will be represented as $\Id_n$ (resp. $0_{m\times n}$) and the standard $n$-dimensional simplex $\{\gamma \in \Re^n_+\mid \sum_{i=1}
^n \gamma_i = 1\}$ will be represented as $\Lambda_n$. The optimal value of an optimization problem $(D)$ will be denoted as $\opt_D$, the convex hull of a set $S$ as $\conv(S)$. We will denote the extreme points of a polytope $P$ as $\extpt(P)$, its extreme rays as $\extray(P)$, and its relative interior as $\ri(P)$. 
In this paper, we repeatedly use several sets and acronyms. We provide these in Table~\ref{tab:notation} for ease of reference.

\begin{table}[ht]
\centering
\begin{tabular}{|l|l|}
\hline
\textbf{Notation/Acronym} & \textbf{Meaning} \\
\hline

$\AC$ & Affine-convex program; see Page~\pageref{eq:defineAC} \\
$\underline{\ACD}^k$ & Level-$k$ of a relaxation hierarchy for $\AC$; see Theorem~\ref{thm:simpleAChierarchy}\\
$\ACRLT^k$ & Level-$k$ RLT relaxation for $\AC$ when $x\in [0,1]^n$; see \eqref{eq:RLThypercube}\\
$\CM$ & Concave Minimization; see Page~\ref{eq:defineCM} \\
$\D$ & Formulation of concavo-convex program; see Page~\pageref{eq:defineD} \\
$\DD$ & Double Description Algorithm\\
$\DDP_\V$ & Convex hull of $\D$ using vertex-representation of $P$; see Page~\pageref{eq:DDPv}\\
$\underline{\DDR}^k$ & Level-$k$ of a relaxation hierarchy for $\D$; see Theorem~\ref{thm:simplehierarchy}\\
$\DBP$ & Disjoint Bilinear Program; see Page~\pageref{eq:defineDBP} \\
DP & Disjunctive Programming \\
$\FD$ & Facial Disjunctive Set; see Page~\pageref{eq:defineFD} \\
FDP & Facial Disjunctive Program\\
$\underline{\FDR}^k$ & Level-$k$ relaxation for $\FD$; see Theorem~\ref{thm:FDPhierarchy}\\
$\bar{A}$ & Matrix $\begin{pmatrix}b & -A\end{pmatrix}$ obtained while homogenizing $Ax\le b$ as $\bar{A}(x_0;x)\ge 0$\\
$C$ & Convex set containing $y$ in concavo-convex program $\D$ \\
$K$ & A polyhedral cone described as $\{(x_0,x)\mid \A(x_0; x) \ge 0, x_0\ge 0\}$\\
$P$ & Polytope describing constraints on $x$, written as $\{x\mid Ax\le b\}$ \\
$P^y$ & Polytope describing constraints on $y$, written as $\{y\mid A^yy\le b^y\}$ \\
RLT & Reformulation-Linearization Technique \\
$\V$ & Vertex representation of $P$; see Page~\pageref{eq:defineV} \\
CXP & Concavo-convex program; see its definition in $\D$\\
$x^{S,S'}$ & RLT product factor $\prod_{i\in S} x \prod_{i\in S'}(1-x_i)$\\
$\mu(x_0;x)$ & barycentric coordinate of $(x_0;x)$; See Definition~\ref{defn:barycentric}\\
$\theta(x_0;x)$ & barycentric coordinate of $(x_0;x)$; See Definition~\ref{defn:barycentric}\\
\hline
\end{tabular}
\caption{Common Notation and Acronyms}
\label{tab:notation}
\end{table}

\section{A Roadmap}

The primary goal of this paper is to propose finite hierarchies for $(\D)$ and $\FD$. We also establish a geometric-algebraic bridge connecting DP with RLT by deriving rational formulas for barycentric coordinates of polyhedral sets using the double-description procedure. This procedure addresses a long-standing question in computational geometry \cite{warren1996barycentric,wachspress2016rational}. Specializing the method to 0-1 hypercube provides a constructive interpretation of RLT product-factors as barycentric coordinates clarifying their role in relaxation construction.

\paragraph{Disjunctive Programming and Vertex Representations:} Assume a vertex description of $P$ is given as follows:
  \begin{equation}
    \nonumber
      \V = \left\{x\in \Re^n\;\middle|\; V\lambda = x, \lambda\in \Lambda_p\right\},
    \label{eq:defineV}
  \end{equation}
where $\Lambda_p = \left\{\lambda\in \Re^p\;\middle|\; \lambda_i\ge 0, \sum_{i=1}^p\lambda_i = 1\right\}$. Here, the columns of $\V$ contain the vertices of $P$. Then, in Section~\ref{sec:disjandRLT}, we use disjunctive programming to show that DBP can be reformulated as:
\begin{equation}\label{eq:DBPhull}
  \begin{alignedat}{2}
      &\min &\quad&\Trace\left(QY V^\intercal\right) \\
      &&& \begin{pmatrix}b^y  &- A^y\end{pmatrix}\begin{pmatrix}\lambda^\intercal\\ Y\end{pmatrix}\ge 0\\
      &&& y = Y e, x=V\lambda, \lambda\in \Lambda_p,
  \end{alignedat}            
\end{equation}
where $(\lambda_i V_{:,i}; Y_{:,i}) \in \lambda_i (P\times P^y)$. The objective $x^\intercal Q y$ is expressed as $\sum_i V^\intercal_{i,:}Q Y_{:,i}$ above, where, for $\lambda_i > 0$, $V^\intercal_{i,:}Q Y_{:,i}$ is a $\lambda_i$-scaling of $V^\intercal_{i,:}Q \frac{Y_{:,i}}{\lambda_i}$, the objective value of a feasible point. Therefore, \eqref{eq:DBPhull} is a convex combination of feasible objective values. These ideas are generalized for $(\D)$ in Lemma~\ref{lemma:disjVrep}.

\paragraph{Barycentric coordinates and representations:} A key contribution is to find algebraic expressions for $\lambda$ in terms of $x$; such expressions are referred to as barycentric coordinates. This is achieved for polyhedral cones in Section~\ref{sec:doubleandsimple} by adapting the double-description algorithm (see Algorithm~\ref{alg:ddwithmult}). Although barycentric coordinates are not in general unique, our specific choice leads to rational functions (see Proposition~\ref{prop:rationalfunction}). Then, in Section~\ref{sec:algebraic}, these coordinates are expressed as ratios of non-negative combinations of products of constraints (see Proposition~\ref{prop:mustructure}). They are further shown to be strictly positive in the interior of the polytope (see Proposition~\ref{prop:positivemu}). Proposition~\ref{prop:DDhypercube} shows that for $[0,1]^n$, these coordinates match the product-factors used in constructing RLT relaxations for 0-1 programs. The formulation of barycentric coordinates in terms of constraints with dual certificates from \eqref{eq:DBPhull} reveals, as shown in Theorem~\ref{thm:representationresult} and Example~\ref{ex:DBPproof}, algebraic proofs of optimality for DBP.

\paragraph{Convex hull formulations:} A simple hierarchy of relaxations for DBP is constructed by relaxing \eqref{eq:DBPhull} as follows:
\begin{equation}\label{eq:DBPHierarchy}
  \begin{alignedat}{2}
      &\min &\quad&\Trace\left(QY^k \left(V^k\right)^\intercal\right) \\
      &&& \begin{pmatrix}b^y &-A^y\end{pmatrix}\begin{pmatrix}\left(\lambda^k\right)^\intercal\\ Y^k\end{pmatrix}\ge 0\\
      &&& y = Y^k e, x=V^k\lambda^k, \lambda^k\in \Lambda_{p_k},
  \end{alignedat}
\end{equation}
where $V^k$ has $p_k$ columns, and these columns are feasible to and contain the vertices of a polytope $P^k$ that outer-approximates $P$. A similar construction is used in Theorems~\ref{thm:simplehierarchy} and \ref{thm:simpleAChierarchy} to derive a finite hierarchy for $(\D)$ and $(\AC)$. The polytopes $P^k$ are derived using constraints indexed $[1\cl{}k]$ in $P$'s description.

\paragraph{Tighter relaxations:} A significant effort in this paper is to tighten the simple hierarchy~\eqref{eq:DBPHierarchy} by exploiting the expressions for $\lambda^k$ as functions of $x$ by using algebraic techniques. We provide two illustrations. First, although \eqref{eq:DBPHierarchy} approximates $P$ using a polytope $P^k$ at level-$k$, we suggest using several approximations of $P$ at the same level. Then, the barycentric coordinates arising from these different outer-approximations are related with one another using algebraic techniques adding strength to the relaxation. To illustrate, assume that $P$ is outer-approximated by two different full-dimensional simplices. A superscript $r\in \{1,2\}$ is added to $V^k, Y^k, \lambda^k$ to distinguish these approximations, rewriting them as $V^{k,r}, Y^{k,r}, \lambda^{k,r}$. Since $V^{k,r}$ is a simplex, the barycentric coordinates are affine functions and can be derived by writing $(1;x) = (e; V^{k,r})\lambda^{k,r}$ such that $\lambda^{k,r} = (e; V^{k,r})^{-1}(1;x)$. Then, the following expresses $\left(\lambda^{k,r}, \bigl(Y^{k,r}\bigr)^\intercal\right)$ using original variables and linearization of $xy^\intercal$ as:
\begin{align*}
\left(\lambda^{k,r}, \bigl(Y^{k,r}\bigr)^\intercal\right) \xrightarrow[\text{represents}]{}\lambda^{k,r}(1;y)^\intercal = (e; V^{k,r})^{-1} 
   \begin{pmatrix}
      1 & y\\
      x & xy^\intercal
   \end{pmatrix}\\ \xrightarrow[\text{linearized as}]{} 
  (e; V^{k,r})^{-1} 
   \begin{pmatrix}
      1 & y\\
      x & W
   \end{pmatrix}.
\end{align*} 
Since $(1,y;x,W)$ is independent of $r$, the following equality is implied:
\begin{equation}\label{eq:expansioneq}
  (e; V^{k,1})\left(\lambda^{k,1}, \bigl(Y^{k,1}\bigr)^\intercal\right) = 
  (e; V^{k,2})\left(\lambda^{k,2}, \bigl(Y^{k,2}\bigr)^\intercal\right).
\end{equation}
This discussion highlights the advantage of using functional expressions for barycentric coordinates. It also shows that approximating $P$ in different ways simultaneously helps construct tighter relaxations. Such constructions play a crucial role in the hierarchy \eqref{eq:tighthierarchy} introduced in Section~\ref{sec:algebraic}.

Second, consider two disjoint subsets, $S$ and $S'$, of $\{1\cl{}n\}$ with $S\cup S' = k$. Then, for $j\not\in S\cup S'$, we have
\begin{equation*}
 \prod_{i\in S}x_i \prod_{i\in S'}(1-x_i) = \prod_{i\in S\cup \{j\}}  x_i \prod_{i\in S'}(1-x_i) + \prod_{i\in S}  x_i \prod_{i\in S'\cup \{j\}}(1-x_i),
\end{equation*}
an affine relation that relates product factors at level $k$ with those at level $k+1$. We show in Theorem~\ref{thm:expressmulin} that such relations generalize to arbitrary polyhedra by relating the barycentric coordinates of $P^k$ with those of $P^{k-1}$. Such a relation does not, however, hold for product-factors for polyhedral sets used in RLT (see Example~\ref{ex:affineDDnotRLT}). When used simultaneously, the two techniques (several outer-approximations and the affine relationship) work in tandem to yield additional benefits. Since a polytope $P^k$ can be tightened in various ways to approach $P$, the barycentric coordinates of these improved outer-approximations are related to one another via those of $P^k$.

\paragraph{Inequalities in the hierarchy:} We briefly describe the proposed relaxations. First, Algorithm~\ref{alg:ddwithmult} generates symbolic expressions for barycentric coordinates in the form of rational, non-negative functions which are used to construct the hierarchy described in \eqref{eq:tighthierarchy}. In particular, we draw the reader's attention to \eqref{eq:DEkmurecurse} and \eqref{eq:DEkmuy} which are derived in Theorem~\ref{thm:expressmulin}. These equalities play a central role in unrolling the derivations performed by Algorithm~\ref{alg:ddwithmult} providing a geometric-algebraic bridge connecting different levels of the hierarchy, a property RLT does not share (see Example~\ref{ex:affineDDnotRLT}). Expanding the polynomials in the numerators of these rational functions yields atomic expressions consisting of monomials divided by polynomials (see Example~\ref{ex:megacontinues}). These denominator polynomials are computed in Step~\ref{algstep:defineNtot}~of Algorithm~\ref{alg:ddwithmult} and can be expressed as sums of products of constraints (see Proposition~\ref{prop:mustructure}). To construct our relaxations, we linearize these atomic expressions using new variables and obtain non-trivial inequalities (see Proposition~\ref{prop:twoineq} and Remark~\ref{rmk:advantageoverRLT}). By leveraging the non-negativity of the denominators over the feasible region, these atomic expressions satisfy constraints from the sum-of-squares hierarchy~\cite{lasserre2002semidefinite} and fractional programming~\cite{he2023convexification} (see Remark~\ref{rmk:nonlinineq}). Furthermore, the inequalities defining $P$ can be lifted to the space of barycentric coordinates, as discussed in Remark~\ref{rmk:other_ineq} and thereafter.

\paragraph{Facial Disjunctions:} In Section~\ref{sec:fdp}, we consider the set $\FD$. In Theorem~\ref{thm:FDPhierarchy} we propose a relaxation hierarchy that terminates with the convex hull of $\FD$. Then, using Remark~\ref{rmk:barycentricproduct}, we introduce barycentric indicators for certain faces of $\prod_{i=1}^{n_p} P_i$. Finally, Proposition~\ref{prop:fdpnonlin}, provides algebraic functions that can replace variables introduced in the hierarchy of Theorem~\ref{thm:FDPhierarchy}.


\section{Preliminaries: Disjunctive Programming and RLT}\label{sec:disjandRLT}
Since the optimal solution to CXP (see the formulation $(D)$) occurs at a vertex of $P$, a convex hull formulation for $(D)$ can be developed using DP. Recall that 
$\V = \{x\in \Re^n\mid V\lambda = x, \lambda\in \Lambda_p\}$
is the $V$-representation of $P$, where $V\in \Re^{n\times p}$ and the columns of $V$ contain $\extpt(P)$. Let 
\begin{equation}
  \nonumber
    (\DDP_{\V})\quad \min\left\{\sum_{i=1}^p \ftilde\left(\lambda_iV_{:,i},y^i,\lambda_i\right) \;\middle|\; y^i\in \lambda_i C\; \forall i\in [\cl{}p], \lambda\in \Lambda_p, x=V\lambda\right\},
  \label{eq:DDPv}
\end{equation}
where $0C=\{0\}$ and $\ftilde(x,y,\lambda)$ is the homogenization of $f(x,y)$ using variable $\lambda$. Specifically, $\ftilde(x,y,\lambda) = \lambda f\left((x,y)/\lambda\right)$ when $\lambda > 0$ and $\ftilde(0,0,0) = 0$ \cite[Theorem 8.5]{Rockafella1970}. Then, standard DP arguments yield the following result.

\begin{lemma}\label{lemma:disjVrep}
    The optimal value of $(\DDP_{\V})$ matches that of $(D)$. Consider an optimal solution $(x_*, \lambda_*,y^1_*,\ldots,y^p_*)$ for $\DDP_{\V}$. Let $j'\in J := \{j\mid {(\lambda_*)}_j > 0\}$. Then, $\left(V_{\cl{},j'},y^{j'}_*/(\lambda_*)_{j'}\right)$ is optimal to $(D)$. Given $(\bar{x},\bar{y})\in P\times C$, introduce additional constraints, $x=\bar{x}$ and $\sum_{i=1}^p y^i = \bar{y}$, to $(\DDP_{\V})$. Then, the optimal value is the convex envelope of $f(x,y)$ over $P\times C$ at $(\bar{x},\bar{y})$.
  \end{lemma}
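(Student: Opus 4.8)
The plan is to dispatch the three assertions in turn, using the homogenization identity $\ftilde(\lambda V_{:,i},y,\lambda)=\lambda f(V_{:,i},y/\lambda)$ for $\lambda>0$, the convention $\ftilde(0,0,0)=0$ together with $0C=\{0\}$, and the concavo-convexity of $f$.

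\emph{Equality of optimal values and recovery of an optimizer.} First I would fix $\lambda\in\Lambda_p$ in $(\DDP_{\V})$ and minimize over the remaining variables. Indices with $\lambda_i=0$ force $y^i=0$ and contribute $0$, while for $\lambda_i>0$ the $i$th term equals $\lambda_i f(V_{:,i},y^i/\lambda_i)$ with $y^i/\lambda_i$ free in $C$. Hence the inner minimization separates over $i$ and returns $\sum_i\lambda_i v_i$, where $v_i:=\min\{f(V_{:,i},z)\mid z\in C\}$ is finite by continuity of $f$ and compactness of $C$; so $(\DDP_{\V})$ equals $\min\{\sum_i\lambda_i v_i\mid\lambda\in\Lambda_p\}=\min_i v_i$, a value attained by some $\lambda_*$ supported on a minimizing index. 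On the other side, since $(D)$ has an optimal solution with $x$ a vertex of $P$, its value is $\min_i\min\{f(V_{:,i},z)\mid z\in C\}=\min_i v_i$, which proves the first claim. For the second, at an optimal tuple the objective reads $\sum_{j\in J}(\lambda_*)_j f(V_{:,j},y^j_*/(\lambda_*)_j)$; each summand is at least $(\lambda_*)_j v_j\ge(\lambda_*)_j\min_i v_i$ and $\sum_{j\in J}(\lambda_*)_j=1$, so equality of the sum with $\min_i v_i$ forces each summand to be tight, i.e.\ $f(V_{:,j'},y^{j'}_*/(\lambda_*)_{j'})=v_{j'}=\min_i v_i$ for every $j'\in J$; thus $(V_{:,j'},y^{j'}_*/(\lambda_*)_{j'})$ is feasible for, and optimal to, $(D)$.

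\emph{The convex-envelope statement.} Write $\check f$ for the convex envelope of $f$ over $P\times C$; I would use the fact that $\check f(\bar x,\bar y)$ equals the infimum of $\sum_k\mu_k f(x^k,y^k)$ over finite convex combinations with $(x^k,y^k)\in P\times C$ and $\sum_k\mu_k(x^k,y^k)=(\bar x,\bar y)$ (attained here, by compactness). For ``$\ge$'', any tuple feasible for the augmented $(\DDP_{\V})$ yields, via $z^i:=y^i/\lambda_i$ on $\{i\mid\lambda_i>0\}$, the convex combination $\sum_i\lambda_i(V_{:,i},z^i)=(V\lambda,\sum_i y^i)=(\bar x,\bar y)$ of points of $P\times C$ whose weighted $f$-values sum to the objective, so the objective is $\ge\check f(\bar x,\bar y)$. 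For ``$\le$'', take any finite convex combination $\sum_k\mu_k(x^k,y^k)=(\bar x,\bar y)$ with $(x^k,y^k)\in P\times C$, expand $x^k=\sum_i\nu^k_i V_{:,i}$ with $\nu^k\in\Lambda_p$, and use concavity of $f$ in $x$ to get $\mu_k f(x^k,y^k)\ge\sum_i\mu_k\nu^k_i f(V_{:,i},y^k)$. With $\lambda_i:=\sum_k\mu_k\nu^k_i$ (so $\lambda\in\Lambda_p$ and $V\lambda=\bar x$) and $z^i:=\lambda_i^{-1}\sum_k\mu_k\nu^k_i y^k\in C$ for $\lambda_i>0$, convexity of $f$ in $y$ gives $\sum_k\mu_k\nu^k_i f(V_{:,i},y^k)\ge\lambda_i f(V_{:,i},z^i)$; taking $y^i:=\lambda_i z^i$ (and $y^i:=0$ when $\lambda_i=0$) produces a point feasible for the augmented $(\DDP_{\V})$ with $\sum_i y^i=\bar y$ and objective $\le\sum_k\mu_k f(x^k,y^k)$. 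Passing to the infimum gives the reverse inequality, hence equality.

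I expect the ``$\le$'' direction just above to be the main obstacle, since it is exactly where concavo-convexity is indispensable: one must first disaggregate each $x^k$ over the vertices of $P$ (concavity in $x$), and then re-aggregate, separately for each vertex, the several $y$-contributions into a single point of $C$ (convexity in $y$). Put differently, this is the claim that $\conv\bigl(\bigcup_i\text{epi}(f|_{\{V_{:,i}\}\times C})\bigr)$ coincides with $\text{epi}(\check f)$, which is the nontrivial ingredient that legitimizes the ``standard DP'' reduction in this setting. The routine points are the degenerate indices with $\lambda_i=0$, dispatched by $0C=\{0\}$ and $\ftilde(0,0,0)=0$, and keeping the perspective and Jensen manipulations well defined given the stated properness of $f(x,\cdot)$ on $C$.
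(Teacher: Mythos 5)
Your proof is correct, but it takes a genuinely different route from the paper's. The paper proves the convex-envelope statement \emph{first} and then derives the optimal-value equality as a corollary; and for the direction you flag as nontrivial (optimal value of the augmented problem $\le$ convex envelope), the paper does not disaggregate an arbitrary convex combination the way you do. Instead, it observes that the value function $\check f(\bar x,\bar y)$ of the augmented program is automatically convex (value function of a jointly convex program), and then establishes $\check f\le f$ pointwise on $P\times C$ by a single construction: pick any $\lambda\in\Lambda_p$ with $V\lambda=\bar x$ and set every block to $(\lambda_i\bar y)$, so concavity of $f(\cdot,\bar y)$ immediately gives $f(\bar x,\bar y)\ge\sum_i\lambda_i f(V_{:,i},\bar y)\ge\check f(\bar x,\bar y)$; because $\check f$ is a convex underestimator, it is $\le h$ by the definition of convex envelope as the largest such. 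Your argument instead appeals to the finite-convex-combination characterization of the envelope and carries out a two-level disaggregation (over $k$ via concavity in $x$, then a re-aggregation over $k$ for each $i$ via convexity in $y$), which is more work but more elementary and makes the role of concavo-convexity entirely explicit; the paper's use of the value-function convexity hides the $y$-side Jensen step inside the abstract theorem that value functions of convex programs are convex. The reverse direction (feasible tuple yields a convex combination certificate, so objective $\ge$ envelope) and the optimality-recovery argument via termwise tightness match the paper closely. Your stand-alone proof that $\opt_{\DDP_\V}=\opt_D$ via $\min_i v_i$ is also fine and is in fact slightly more self-contained than the paper's, which obtains it as a consequence of $\check f=h$.
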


\long\def\disjstandardsimplex{
  We first argue the last statement of the result. Let $h(x,y)$ be the convex envelope of $f(x,y)$ over $P\times C$ and let:
  \begin{equation*}
    (\CE_\Lambda)\quad \check{f}(\bar{x},\bar{y}) = \min\left\{\sum_{i=1}^p\ftilde\left(\lambda_iV_{:,i},y^i,\lambda_i\right) \;\middle|\;
    \begin{aligned} &y^i\in \lambda_i C\; \forall i\in [\cl{}p]\\
       &\bar{x} = V\lambda, \sum_{i=1}^p y^i = \bar{y}, \lambda\in \Lambda_p\end{aligned}\right\}.
  \end{equation*}
  Since $(\CE_{\Lambda})$ is a convex program, its value function $\check{f}(\bar{x},\bar{y})$ is convex \cite[Corollary 3.32]{RockWets98}. Also, since $x\in P$ and $V\supseteq \extpt(P)$, we can choose $\lambda\in \Lambda_p$ such that $V\lambda=\bar{x}$ and conclude that
  \begin{equation*}
    f(\bar{x},\bar{y})\ge \sum_{i=1}^p \lambda_i f(V_{\cl{},i},\bar{y}) = \sum_{i=1}^p \ftilde(\lambda_iV_{:,i},\bar{y}\lambda_i,\lambda_i) \ge \check{f}(\bar{x},\bar{y}),
  \end{equation*}
  where the first inequality is because $f$ is concave with respect to $x$ for a fixed $y$ and the last inequality follows because $\lambda_i\bar{y}\in \lambda_iC$ and $\sum_{i=1}^p \lambda_i \bar{y} = \bar{y}$ imply that $(\lambda_iV_{\cl{}i},\bar{y}\lambda_i,\lambda_i)_{i\in \{\cl{}p\}}$ is feasible to $(\CE)_\Lambda$. Since $h(x,y)$ is the highest convex underestimator of $f(x,y)$, we have $h(\bar{x},\bar{y})\ge \check{f}(\bar{x},\bar{y})$. It follows from compactness of $P\times C$ and continuity of $f(x,y)$ that the convex hull of $f$ over $P\times C$ is compact \cite[Corollary 2.30]{RockWets98}. Assume $(\tilde{\lambda},\tilde{y}^1,\ldots,\tilde{y}^p)$ is optimal to $(\CE)_\Lambda$ and $J_{\tilde{\lambda}} = \{i\mid \tilde{\lambda}_i > 0\}$. Then,
  \begin{equation*}
    \check{f}(\bar{x},\bar{y}) = \sum_{i=1}^p \tilde{f}(\tilde{\lambda}_iV_{\cl{},i},\tilde{y}^i,\tilde{\lambda}_i) = \sum_{i \in J_{\tilde{\lambda}}} \tilde{\lambda}_i f\left(V_{\cl{},i},\dfrac{\tilde{y}^i}{\tilde{\lambda}_i}\right) \ge h(\bar{x},\bar{y}),
  \end{equation*}
  where the second equality is because it follows from $0C=\{0\}$ and $\tilde{y}^i\in \tilde{\lambda}_iC$ that $\tilde{y}^i = 0$ whenever $\tilde{\lambda}_i = 0$ and $\tilde{f}(0,0,0)$ was assumed to be zero. The last inequality is because $\tilde{y}^i/\tilde{\lambda}^i\in C$, $V\tilde{\lambda} = \bar{x}$, and $\sum_{i:\tilde{\lambda}_i > 0} \tilde\lambda_i\left(\tilde{y}^i/\tilde{\lambda}_i\right) = \bar{y}$. In other words, $h(\bar{x},\bar{y}) = \check{f}(\bar{x},\bar{y})$.

  The first statement of the result follows since $\check{f}(x,y)$ being the convex envelope of $f(x,y)$ over $P\times C$ achieves the same minimum as $f(x,y)$. Given the optimal solution for $\DDP_\Lambda$, we recover the solution for $(\D)$ as follows: 
  \begin{equation*}
    \begin{split}
    \opt_{\DDP_{\Lambda}} = \sum_{i\in J} f\left((\lambda_*)_iV_{\cl{},i},y_*^i, (\lambda_*)_i\right) = \sum_{i\in J} (\lambda_*)_i f(V_{\cl{},i}, y_*^i/(\lambda_*)_i)\\
    \quad \ge \min_{i\in J} f(V_{\cl{},i}, y_*^i/(\lambda_*)_i) \ge \opt_D,
    \end{split}
  \end{equation*}
  where the first equality is because $\tilde{f}(0,0,0) = 0$ and $J=\{j\mid (\lambda_*)_j > 0\}$, the first inequality is because $\lambda_*\in \Lambda_p$, and the second inequality is because $y^i/(\lambda_*)_i \in C$. Therefore, equality holds throughout. In particular, since the first inequality above is an equality, it follows that $\left(V_{\cl{}j},y_*^j/(x_*)_j\right)$ is optimal to $(\D)$ for any $j\in J$.}

  \iftoggle{allproofsinpaper}{
  \begin{proof}
    \disjstandardsimplex\qed
  \end{proof}}{}

  \iftoggle{allproofsinpaper}{}{The missing proofs are in the Appendix.} 
  We now consider the case where $P=[0,1]^n$. By Lemma~\ref{lemma:disjVrep}, a convex reformulation is obtained as follows:
\begin{equation*}
  (\DDP_\square)\;\min\left\{\sum_{S\subseteq\{\cl{}n\}} \ftilde\left(\lambda_S\chi_S,y^S,\lambda_S\right) \;\middle|\; y^S\in \lambda_S C, \lambda\in \Lambda_{2^n}, x=\sum_{\mathclap{S\subseteq \{\cl{}n\}}}\lambda_S\chi_S\right\},
\end{equation*}
where the vector $\lambda$ is indexed using subsets of $\{\cl{}n\}$ and $\chi_S\in\{0,1\}^n$ is $1$ indicates membership in $S$ by taking value $1$ at the corresponding indices. 

Throughout the paper, for $\alpha\in \Z^n$, we denote $\prod_{i=1}^n x_i^{\alpha_i}$ as $x^\alpha$. The RLT relaxation at level-$k$ linearizes monomials $x^{\chi_S}$ for $|S|\le k$ in inequalities obtained using the product factors $x^{\chi_{S'}}(1-x)^{\chi_{S''}}$, where $S'\cap S'' =\emptyset$ and $|S'\cup S''| = k$. We express $x^{\chi_S}$ as simply $x^S$ and $x^{\chi_{S'}}(1-x)^{\chi_{S''}}$ as $x^{S',S''}$. New variables are also introduced to linearize $yx^{S}$ and $zx^S$. A key insight of RLT is that the product linearizations and product factors are related to one another via a linear transformation \cite{sherali1990hierarchy}. Given a set $T$ such that $|T|=k$, we have 
\begin{equation}\label{eq:invertmult}
  x^{S,T\backslash S} = \prod_{S':[S,T]}(-1)^{|S'\backslash S|}x^{S'} \text{ and } x^{S} = \sum_{S'\subseteq T\backslash S} x^{S\cup S',T\backslash (S\cup S')},
\end{equation} 
where $[I,J]$ are the sets that contain $I$ and are contained in $J$. At level-$k$, there are $\sum_{i=0}^k\binom{n}{i}$ monomials---$x^S$ for all $S\subseteq \{\cl{}n\}$ such that $|S|\le k$---while the number of product factors---$x^{S,S'}$, $S\cap S'=\emptyset$, and $|S\cup S'| = k$---is $\binom{n}{k}2^k$. The monomials being fewer, if linearized, yield tighter relaxations. So, the level-$k$ RLT relaxation for $(\AC)$ expands and linearizes the following:
\begin{subequations}\label{eq:RLThypercube}
\begin{alignat}{4}
  &(\ACRLT)^k&\;&\min &\;& z_0 + \sum_{i=1}^n z_ix_i\\
  &&&\text{s.t.}&& zx^{S,S'} \ge x^{S,S'}g\left(\dfrac{yx^{S,S'}}{x^{S,S'}}\right) &\quad& S\cap S'=\emptyset, |S\cup S'|=k\label{eq:RLTHghomogenize}\\
  &&&&&yx^{S,S'} \in x^{S,S'}C&& S\cap S'=\emptyset, |S\cup S'|=k\label{eq:RLTHChomogenize}\\
  &&&&&x^{S,S'} \ge 0&& S\cap S'=\emptyset, |S\cup S'|=k.\label{eq:RLTHlambda}
\end{alignat}
\end{subequations}

\begin{proposition}\label{prop:RLThypercubetight}
  When $P=[0,1]^n$, RLT produces a convex reformulation of $(\AC)$ at level $n$.
\end{proposition}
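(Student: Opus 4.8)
The plan is to show that, at level $k=n$ and once the RLT linearization has been carried out, the program $(\ACRLT)^n$ of \eqref{eq:RLThypercube} coincides, up to a linear change of variables, with the disjunctive reformulation $(\DDP_\square)$, which Lemma~\ref{lemma:disjVrep} already certifies to be an exact convex reformulation of $(\D)$ and hence of $(\AC)$. The first observation is purely combinatorial: when $k=n$, the requirement $|S\cup S'|=n$ together with $S\cap S'=\emptyset$ forces $S'=\{\cl{}n\}\backslash S$, so the full-degree product factors are exactly the $2^n$ functions $x^{S,\{\cl{}n\}\backslash S}=\prod_{i\in S}x_i\prod_{i\notin S}(1-x_i)$, one for each vertex $\chi_S$ of $[0,1]^n$; these are precisely the multilinear barycentric coordinates of the cube.

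Next I would spell out the content of the RLT identities \eqref{eq:invertmult} for the unique full index set $T=\{\cl{}n\}$. Writing $\lambda_S:=x^{S,\{\cl{}n\}\backslash S}$, the second identity in \eqref{eq:invertmult} with $S=\emptyset$ gives $\sum_S\lambda_S=x^\emptyset=1$, and with $S=\{i\}$ it gives $\sum_{S\ni i}\lambda_S=x^{\{i\}}=x_i$; combined with \eqref{eq:RLTHlambda} this says $\lambda\in\Lambda_{2^n}$ and $x=\sum_S\lambda_S\chi_S$, i.e.\ linear precision holds. The same identities, and their counterparts for the products $yx^{S}$ and $zx^{S}$, express every lower-degree linearization variable occurring in $(\ACRLT)^n$ --- notably $x^{\{i\}}=x_i$, the variables $yx^{\{i\}}$, $zx^{\{i\}}$, and the objective terms $z_ix_i$ --- as explicit linear combinations of the degree-$n$ variables $x^{S,\{\cl{}n\}\backslash S}$, $yx^{S,\{\cl{}n\}\backslash S}$, $zx^{S,\{\cl{}n\}\backslash S}$, so the degree-$n$ variables carry all the information in the relaxation.

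With $y^S:=yx^{S,\{\cl{}n\}\backslash S}$ and $z^S:=zx^{S,\{\cl{}n\}\backslash S}$, I would then match constraints and objective termwise. Constraint \eqref{eq:RLTHChomogenize} becomes $y^S\in\lambda_SC$; constraint \eqref{eq:RLTHghomogenize} becomes the epigraph inequalities $z^S_\ell\ge\tilde g_\ell(y^S,\lambda_S)$ for the homogenizations $\tilde g_\ell$ of the convex functions $g_\ell$; \eqref{eq:RLTHlambda} together with $\sum_S\lambda_S=1$ gives $\lambda\in\Lambda_{2^n}$; and \eqref{eq:invertmult} gives $x=\sum_S\lambda_S\chi_S$ --- exactly the constraints of $(\DDP_\square)$. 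For the objective, expanding $f(x,y)=g_0(y)+\sum_i x_ig_i(y)$ yields $\ftilde(\lambda_S\chi_S,y^S,\lambda_S)=\tilde g_0(y^S,\lambda_S)+\sum_{i\in S}\tilde g_i(y^S,\lambda_S)$, while multiplying $z_0+\sum_i z_ix_i$ by the partition of unity $\sum_S\lambda_S=1$ and reindexing ($\sum_i\sum_{S\ni i}=\sum_S\sum_{i\in S}$) turns it into $\sum_S\bigl(z^S_0+\sum_{i\in S}z^S_i\bigr)$; since every $z^S_i$ occurs there with coefficient $+1$ in a minimization, \eqref{eq:RLTHghomogenize} may be taken tight, so $(\ACRLT)^n$ and $(\DDP_\square)$ have the same optimal value. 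Because the homogenized epigraphs and the cones $\lambda_SC$ are convex and the objective is linear in the linearization variables, $(\ACRLT)^n$ is a convex program, and the displayed correspondence exhibits it as the epigraph reformulation of $(\DDP_\square)$ (with the $z^S$ serving as epigraph variables for the convex summands $\ftilde(\lambda_S\chi_S,y^S,\lambda_S)$); Lemma~\ref{lemma:disjVrep} then gives the proposition, including that adjoining $x=\bar x$ and $\sum_S y^S=\bar y$ makes the optimal value the convex envelope of $f$ over $[0,1]^n\times C$ at $(\bar x,\bar y)$.

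The main obstacle is bookkeeping rather than anything conceptual: \eqref{eq:RLThypercube} displays only the degree-$n$ constraints, so one must be explicit that the RLT relaxation silently carries the linearization/consistency equations \eqref{eq:invertmult} --- including $x^\emptyset=1$, $x^{\{i\}}=x_i$, and their analogues for the $y$- and $z$-products --- and must verify that these are exactly the equations making the substitution $\lambda_S=x^{S,\{\cl{}n\}\backslash S}$ a value-preserving bijection between the feasible descriptions of $(\ACRLT)^n$ and $(\DDP_\square)$, while checking that the extra components $z^S_\ell$ with $\ell\notin\{0\}\cup S$ present in the vector inequality \eqref{eq:RLTHghomogenize} but absent from the objective are harmless. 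Once this identification is in place, the remaining steps --- convexity via perspective functions and the reindexing that matches the two objectives --- are routine.
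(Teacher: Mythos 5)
Your proof is correct and follows essentially the same route as the paper's: use the invertibility of the linearization map \eqref{eq:invertmult} at level $n$ to change variables from the full-degree linearizations to $(\lambda_S,y^S,z^S)$, verify that these satisfy the constraints of $(\DDP_\square)$, and reindex the RLT objective as $\sum_S\bigl(z^S_0+\sum_{i\in S}z^S_i\bigr)$. The only cosmetic difference is the closing step --- the paper combines the a~priori fact that $(\ACRLT)^n$ is a valid relaxation of $(\AC)$ with the inequality showing its objective overestimates that of $(\DDP_\square)$ and squeezes, whereas you argue program equivalence directly by noting that \eqref{eq:RLTHghomogenize} may be taken tight in the objective-relevant components and the remaining $z^S_\ell$ project out --- and you make explicit the linear-precision identity $x=\sum_S\lambda_S\chi_S$, which the paper leaves implicit in the claim that ``$(\lambda_S,y^S)$ satisfy the constraints in $(\DDP_\square)$.''
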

\begin{proof}
  At level $n$, the number of monomials is equal to the number of product factors because $\sum_{i=1}^n \binom{n}{i} = 2^n = \binom{n}{n}2^n$, and this counting argument shows that the linear transformation \eqref{eq:invertmult} relating $x^{S,\{\cl{}n\}\backslash S}$ and $x^S$ is invertible. Let $\lin(\cdot)$ denote the linearization of $(\cdot)$ after $x^{S,\{\cl{}n\}\backslash S}$ is expanded using the first expression in \eqref{eq:invertmult}. We write $\lin(x^{S,\{\cl{}n\}\backslash S})$ as $\lambda_S$, $\lin(yx^{S,\{\cl{}n\}\backslash S})$ as $y^S$, and $\lin(zx^{S,\{\cl{}n\}\backslash S})$ as $z^S$ and create inequalities satisfied by these linearizations. Since non-negativity of $\lambda_S$ follows from \eqref{eq:RLTHlambda} and the second inequality in \eqref{eq:invertmult} implies $\sum_{S\subseteq \{\cl{}n\}}\lambda_S = 1$, $(\lambda_S)_{S\subseteq\{\cl{}n\}}$ belongs to $\Lambda_{2^n}$.
  Using the second equality in \eqref{eq:invertmult}, we obtain 
  \begin{align*}
    &z_0 = \sum_{S\subseteq \{\cl{}n\}}\lin\left(z_0 x^{S,\{\cl{}n\}\backslash S}\right) = \sum_{S\subseteq\{\cl{}n\}}z^S_0\\
    &\lin(z_ix_i) = \sum_{S\subseteq \{\cl{}n\}, i\in S}\lin\left(z_i x^{S,\{\cl{}n\}\backslash S}\right) = \sum_{S\subseteq \{\cl{}n\}, i\in S}z^S_i.
  \end{align*} 
  Combining, we have
  \begin{equation*}
    \begin{split}
    &z_0 + \sum_{i=1}^n \lin(z_ix_i) = \sum_{S\subseteq\{\cl{}n\}}\left(z^S_0+(\chi_S)_iz^S_i\right)\\
    &\qquad \ge \sum_{S\subseteq \{\cl{}n\}}\left[\lambda_S g_0\left(\dfrac{y^S}{\lambda_S}\right) + \sum_{i=1}^n (\chi_S)_i\lambda_S g_i\left(\dfrac{y^S}{\lambda_S}\right)\right] = \sum_{\mathclap{S\subseteq\{\cl{}n\}}}\ftilde\left(\lambda_S\chi_S,y^S,\lambda_S\right),
    \end{split}
  \end{equation*}
  where the inequality follows from \eqref{eq:RLTHghomogenize} and $(\chi_S)_i\ge 0$. Moreover, $y^S\in \lambda_S C$ by \eqref{eq:RLTHChomogenize}. Clearly, \eqref{eq:RLThypercube} is a valid relaxation of $(\AC)$ when $P=[0,1]^n$. Since we have shown that $(\lambda_S,y^S)$ satisfy the constraints in $(\DDP_\square)$ and the objective of \eqref{eq:RLThypercube} overestimates that of $(\DDP_\square)$, it follows that \eqref{eq:RLThypercube}, at level-$n$, is a valid convex reformulation of $(\AC)$.\qed
\end{proof}

The above result shows that RLT terminates finitely yielding the convex hull of $(\AC)$. This result is akin to the use of RLT to develop convex envelopes of multilinear functions over a unit hypercube~\cite{sherali1997convex} but differs in that $y$ lies in a general compact convex set and participates in the objective via arbitrary convex functions. 

Lemma~\ref{lemma:disjVrep} can be used to derive the convex hull when $(x_1,\ldots,x_n) \in \Lambda_{q_1}\times\cdots\times \Lambda_{q_n}$. It is shown in~\cite{oh2022algorithms,oh2024branchandbound} that when $P$ is of the form $\Lambda_{q_1}\times\cdots\times \Lambda_{q_n}$ RLT yields the convex hull at level-$n$. Our focus is on the structure of this RLT relaxation and that expanding the product factors results in a relaxation that is at least as strong. To see this, observe that the product factors in the relaxation are of the form $\prod_i x_{ij_i}$, where $j_i \in \{1\cl{}q_i\}$, and $x_{iq_i}$ is rewritten as $1 - \sum_{j=1}^{q_i-1} x_{ij}$ exploiting $\sum_{j=1}^{q_i}x_{ij} = 1$. After distributing products over addition, the monomials $\prod_i x_{ij_i}$, for $j_i \in \{0\cl{}(q_i-1)\}$, are linearized, where $x_{i0}$ denotes the constant $1$. We now present a counting argument showing that expanding the product form, in this way, does not weaken the relaxation and allows it to capture linear dependencies among the product factors.

\begin{proposition}\label{prop:expansionhelps}
Consider $(x_1,\ldots,x_n)\in (\Lambda_q)^n$. There are $\binom{n}{k}q^k$ product factors of the form $\prod_{i\in T} x_{ij_i}$ where $|T| \le k$. Replacing $x_{iq}$ with $1-\sum_{j=1}^{q-1} x_{ij}$ and expanding the expression reduces the number of nonlinear monomials to $\sum_{i=1}^k\binom{n}{i}(q-1)^i$. The resulting RLT relaxation exploits linear dependencies between the product-factors.
\end{proposition}
\begin{proof}
There are $\binom{n}{k}$ ways to select $T$, and to create a product factor, for each $i\in T$, we may select any element of $\{x_{i1},\ldots,x_{iq}\}$. This yields the count $\binom{n}{k}q^k$. When we replace $x_{iq_i}$ with $1-\sum_{j=1}^{q-1} x_{ij}$ and expand the product factor, the resulting monomials have degree $i\le k$ where each term in the monomial can take one of $q-1$ values yielding $\sum_{i=1}^k \binom{n}{i}(q-1)^i$ possible choices. Since
\begin{equation*}
\binom{n}{k}q^k  = \sum_{i=0}^k \binom{n}{k}\binom{k}{i}(q-1)^i 
= \sum_{i=0}^k \binom{n}{i}\binom{n-i}{k-i}(q-1)^i
\ge \sum_{i=0}^k \binom{n}{i}(q-1)^i,
\end{equation*}
where the last inequality holds because $i\le k\le n$. The linear dependencies exploited by the relaxation after expansion are seen by choosing distinct supersets $T$ and $T'$ of $S$ and observing that:
\begin{equation}\label{eq:affinerelation}
\prod_{i\in S}x_{ij_i} = \prod_{i\in S}x_{ij_i}\prod_{j\in T\backslash S} (x_{j1}+\cdots+x_{jq}) =  \prod_{i\in S}x_{ij_i}\prod_{j\in T'\backslash S} (x_{j1}+\cdots+x_{jq}).
\end{equation}
In other words, for a given $S$ with cardinality $i$, there are $\binom{n-i}{k-i}$ ways in which this monomial can be expressed linearly using product factors $\prod_{i\in T} x_{ij_i}$. \qed
\end{proof}

Proposition~\ref{prop:expansionhelps} shows that recognizing the affine relationship between product factors at level $\ell$, such as $\prod_{i\in S}x_{ij_i}$, where $|S|=\ell$, with those at a level $k > \ell$, affords an opportunity to tighten the relaxation. We will show later that such affine relationships between barycentric coordinates at adjacent levels can be developed for general polyhedral cones (see Theorem~\ref{thm:expressmulin}).


\section{Double description and a simple finite hierarchy}\label{sec:doubleandsimple}

For a polyhedron $P=\{x\mid Ax\le b\}$, where $A\in \Re^{m\times n}$, the double description (DD) procedure, as outlined in Algorithm~\ref{alg:ddwithmult}, derives its $V$-representation in $m$ iterations \cite{motzkin1953double}. Our finite hierarchy for $(\D)$ critically relies on this property of DD. We leverage the fact that Algorithm~\ref{alg:ddwithmult} creates increasingly tighter outer-approximations of $P$ with each iteration. By incorporating these approximations into the relaxation described in Lemma~\ref{lemma:disjVrep}, we construct a finite hierarchy of relaxations that terminates with the convex hull of $(\D)$. This basic relaxation hierarchy is derived using DP. 
However, its connection to RLT is less direct and will be explored in Section~\ref{sec:algebraic}. 

This connection relies on the barycentric coordinates, which we will next derive by adapting the DD procedure. While the DD procedure is typically viewed as a numerical algorithm for computing the $V$-representation of polyhedra, we have augmented it with symbolic calculations (Steps~\ref{algstep:definemu1}, \ref{algstep:definetheta1}, and \ref{algstep:definemu2}) to obtain Algorithm~\ref{alg:ddwithmult}. This modified procedure computes barycentric coordinates for each point $x\in P$. These coordinates represent each point in $P$ in terms of its vertices, rays, and lineality directions. Such a representation is guaranteed by the Minkowski-Weyl Theorem for polyhedra \cite{conforti2014integer}. While these coordinates are often treated as numerical values for each $x$, it is crucial for our purposes to regard them as functions of the point $x$ being represented. We now formally define the barycentric coordinates for polyhedral cones, which also allows us to extend this framework to polyhedra, since the latter can be homogenized to yield a polyhedral cone.

\begin{definition}\label{defn:barycentric}
    Given a cone $K$, a set of its rays $(r_i)_{i=1}^p$ and its lineality directions $(l_i)_{i=1}^q$, we say $\left(\left(\mu_{r_i}(x)\right)_{i=1}^{p},\left(\theta_{l_i}(x)\right)_{i=1}^{q}\right)$ are barycentric coordinates for $K$ if, for all $x\in K$, $x=\sum_{i=1}^{p} \mu_{r_i}(x)r_i + \sum_{i=1}^{q}\theta_{l_i}(x)l_i$, with $\mu_{r_i}(x) \ge 0$.
\end{definition}

In order for barycentric coordinates to exist, it follows easily that the set of rays must contain $\extray(K)$ and the lineality directions must be covered by $(l_i)_{i=1}^q$ and $(r_i)_{i=1}^p$. Although barycentric coordinates have been studied extensively in computer-aided-design and finite-element literature~\cite{warren1996barycentric,warren2003uniqueness,warren2007barycentric,wachspress2016rational}, it has not been realized that DD procedure produces these coordinates. It is known that the barycentric coordinates cannot be polynomials even for quadrilaterals in two dimensions~\cite{warren2003uniqueness,wachspress2016rational} and rational functions suffice for general polytopes \cite{warren1996barycentric}. We operationalize this insight by algorithmically constructing the barycentric coordinates as rational functions of $x$.

\begin{algorithm}
    \normalsize
    \caption{Double Description Algorithm with Barycentric Coordinates}\label{alg:ddwithmult}
    \begingroup
    \fontsize{8}{14}\selectfont
    \begin{algorithmic}[1]
      \State\label{algstep:initialize} $L^0=(0;\Id_{n})$;\; $\theta^0 = x$\; $R^0=e_0$\; $\mu^0=(x_0)$\; $E^0 = []$.
      \For{$k$ from $1$ to $m$}\label{algstep:ddforloop}
        \State $p_{k-1} \leftarrow \text{ColumnDimension}(R^{k-1})$; $q_{k-1} \leftarrow \text{ColumnDimension}(L^{k-1})$
        \State\label{algstep:loopinit}  $E^{k} = E^k$\;;\; $\theta^k = \theta^{k-1}$;\; $L^k = L^{k-1}$.
        \State\label{algstep:definealphabeta} $\alpha \leftarrow \A_{k,\cl}L^{k-1}$; $\beta \leftarrow \A_{k,\cl{}}R^{k-1}$
        \If{$q_{k-1} > 0$ and $\alpha \ne 0$}\label{algstep:ifcondition}
        \State\label{algstep:definexi} $\xi \leftarrow \arg\min\{i\mid \alpha_i\ne 0\}$
          \If{$\alpha_{\xi}<0$}
            \State\label{algstep:negateL} $L^{k-1}_{:,\xi} \leftarrow -L^{k-1}_{:,\xi}$;\; $\theta^{k-1}_{\xi} \leftarrow -\theta^{k-1}_{\xi}$;\; $\alpha_{\xi} = -\alpha_{\xi}$
          \EndIf
          \State\label{algstep:definemu1} $\mu^k \leftarrow \left(\theta^{k-1}_\xi+\sum_{j=\xi+1}^{q_{k-1}}\dfrac{\alpha_j\theta^{k-1}_j}{\alpha_\xi} + \sum_{j=1}^{p_{k-1}} \dfrac{\beta_j\mu^{k-1}_j}{\alpha_\xi}\;,\; \left(\dfrac{\mu^{k-1}_j}{\alpha_\xi}\right)_{j=1}^{p_{k-1}}\right)$
          \State\label{algstep:definetheta1} $\theta^k \leftarrow \left(\left(\theta^{k-1}_j\right)_{j=1}^{\xi-1}\;,\;\left(\dfrac{\theta^{k-1}_{j}}{\alpha_{\xi}}\right)_{j=\xi+1}^{q_k}\right)$
          \State\label{algstep:defineR1} $R^k\leftarrow \left(L^{k-1}_{:,\xi}\;,\; \left(\alpha_{\xi} R^{k-1}_{\cl{},j}-\beta_j L^{k-1}_{:,\xi}\right)_{j=1}^{p_{k-1}}\right)$
          \State\label{algstep:defineL1} $L^k\leftarrow \left(\left(L^{k-1}_{\cl{},j}\right)_{j=1}^{\xi-1}\;,\;\left(\alpha_\xi L^{k-1}_{\cl{},j}-\alpha_j L_{\cl{},\xi}\right)_{j=\xi+1}^{q_{k-1}}\right)$
        \Else
          \State \label{algstep:defineNsets}$N^- = \{i\mid \beta_i < 0\}$;\; $N^0 = \{i\mid \beta_i = 0\}$;\; $N^+ = \{i\mid \beta_i > 0\}$
          \State \label{algstep:defineNtot} $N_{\text{tot}} = \sum_{i\in N^+}\mu_i\beta_i$
          \State\label{algstep:definemu2} $\mu^k \leftarrow \left(\mu^{k-1}_{N^0}, \left(\mu^{k-1}_i+\sum\limits_{\mathclap{j\in N^-}}\dfrac{\mu^{k-1}_j\mu^{k-1}_i\beta_j}{N_{\text{tot}}}\right)_{i\in N^+}\mskip-10mu,\left(\dfrac{\mu^{k-1}_j\mu^{k-1}_i}{N_{\text{tot}}}\right)_{\myratio{i\in N^+}{j\in N^-}}\right)$
          \State\label{algstep:defineR2} $R^k\leftarrow  \left(R^{k-1}_{\cdot N^0}\;,\; R^{k-1}_{\cdot N^+}\;,\; \left(\beta_iR^{k-1}_{:,j}-\beta_jR^{k-1}_{:,i}\right)_{i\in N^+,j\in N^-}\right)$
          \State\label{algstep:defineE2} $E^k = \left(E^{k-1},k\right)$ if $N^+ = \emptyset$  
        \EndIf
      \EndFor
    \end{algorithmic}
    \endgroup
  \end{algorithm}

We refer the reader to \cite{Fukuda1995,padberg2013linear} for a detailed treatment of acceleration and numerical stabilization techniques for DD algorithm, which include reordering inequalities, removing redundant points from the $V$-representation, and reducing the size of generated numbers. We do not need these strategies for our theoretical developments, but expect that they would help with scaling and numerical performance of our hierarchy. For concise notation, it is helpful at this stage to arrange the rays $r_i$ and lineality directions $l_i$ as columns of matrices $R$ and $L$ respectively that are updated at each iteration of Algorithm~\ref{alg:ddwithmult}.

In summary, Algorithm~\ref{alg:ddwithmult} computes the barycentric coordinates of a cone $K = \{(x_0;x)\in \Re^{n+1}\mid \A (x_0;x)\ge 0, x_0\ge 0\}$, where $\A\in \Re^{m\times (n+1)}$ and $x\in \Re^n$. This cone is obtained by homogenizing $P$ using the variable $x_0$ so that $\A = (b; -A)$. Apart from initialization, the main content of the algorithm is the loop beginning at Step~\ref{algstep:ddforloop}. Thus, by iteration $k$ or $k^{\text{th}}$ iterates of Algorithm~\ref{alg:ddwithmult}, we respectively mean the $k^\text{th}$ iteration of the loop or the quantities computed during that iteration. It is well-known that DD algorithm, at iteration $k$, produces a $V$-description of $K^k = \{(x_0;x)\in \Re^{n+1}\mid \A_{1\cl{}k,\cl{}}(x_0;x)\ge 0, x_0\ge 0\}$, which is obtained by restricting attention to the first $k$ inequalities describing $K$ \cite[Section 7.4]{padberg2013linear}. 
\iftoggle{ancilliarydiscussions}{
    Several intricacies of the DD algorithm are crucial for deriving the barycentric coordinates and proving their properties. To make our treatment self-contained, we include a short explanation of the algorithm and its correctness in Appendix~\ref{app:ddalgorithm}. The reader is referred to \cite{padberg2013linear,conforti2014integer} for additional background on the algorithm.}{\ddalgorithm}
At the final step, since $K^m = K$, the algorithm computes a $V$-description of $K$. At each iteration, this $V$-description, detailed below in \eqref{eq:pointrep}, uses the $k^\text{th}$ iterates, $R^k\in \Re^{k\times p_k}$ and $L^k\in \Re^{k\times q_k}$, where $R^k$ (resp. $L^k$) contains rays (resp. lineality directions) of $K^k$ as columns. We remark that Algorithm~\ref{alg:ddwithmult}, as stated, may produce feasible rays that are not extremal, so that the resulting description is not minimal. However, it can be made minimal later by pruning, as described in Remark~\ref{rmk:removeredundant}. As mentioned before, we keep track of the barycentric coordinates $\mu^k$ (resp. $\theta^k$) for $R^k$ (resp. $L^k$) via symbolic calculations. For all $(x_0;x) \in K^k$, this provides the following linear-algebraic version of the representation in Definition~\ref{defn:barycentric}:
\begin{equation}\label{eq:pointrep}
    R^k\mu^k + L^k\theta^k = (x_0;x), \quad \mu^k \ge 0.
\end{equation}
\long\def\ddalgorithm{%
The DD algorithm produces a $V$-representation of $K^k = \{\A(x_0;x)\ge 0, x_0\ge 0\}$ which lends itself naturally to an inductive argument. Here, we sketch the intuitive ideas behind the correctness of the algorithm. We remark that several modifications have been devised to the DD procedure to generate minimal descriptions \cite{Fukuda1995,padberg2013linear} so that the matrices $R^k$ contain only extremal rays. However, we do not utilize these extensions and do not detail them here. 

The base case begins with a $V$-representation of $K^0=\{(x_0;x)\mid x_0\ge 0\}$ by setting $L^0 = (0; \Id_{n})$, $\theta^0 = x$, $R^0=e_{0}$, $\mu^0 = (x_0)$, where $\Id_{n}$ denotes the identity matrix in $\Re^{n\times n}$. By the induction hypothesis, at iteration $k+1$, a $V$-representation is available for $(x_0;x)\in K^k$. Then, at iteration $k+1$, depending on the outcome of the test at Step~\ref{algstep:ifcondition}, Algorithm~\ref{alg:ddwithmult} executes either Steps~\ref{algstep:definexi}-\ref{algstep:defineL1} (Case 1) or Steps~\ref{algstep:defineNsets}-\ref{algstep:defineE2} (Case 2). The discriminating test checks whether $\A_{k+1,\cl{}}$, the $k+1^\text{st}$ row of $\A$, is orthogonal to the lineality space $L^{k}$ of $K^{k}$. If not, it executes Case 1, otherwise Case 2. We remark that, by rearranging rows, at iteration $k+1$, we may pick any row with index in $\{(k+1)\cl{}m\}$  that is not orthogonal to $L^{k}$. Doing so, Case 1 is executed in the first iterations until $L^m$ has been identified, and the remaining iterations only change $R^{k+1},\ldots,R^m$ by executing Case 2. Now, we consider the two cases in more detail.
\begin{casesp}
    \item Let $L^k_{\cl{},\xi}$, where $\xi\in[\cl{}q_k]$, be a column that is not orthogonal to $\A_{k+1,\cl{}}$. Let $\alpha  = A_{k+1,\cl{}}L^k$ and, by negating $L^k_{:,\xi}$ if necessary, assume that $\alpha_\xi > 0$ (see Step~\ref{algstep:negateL}). Then, Algorithm~\ref{alg:ddwithmult} constructs columns of $L^{k+1}$ (resp. $R^{k+1}$), to be orthogonal to $A_{k+1,\cl{}}$, by adding a suitably chosen multiple of $L^k_{\cl{},\xi}$ to other columns $L^k_{\cl{}j}$ (resp. $R^k_{\cl{}j}$), $j \in [\cl{}q_k]\backslash\{\xi\}$ (resp. $j\in [\cl{}p_k]$) of $L^k$ (resp. $R^k$). In addition, we prepend the column $L^k_{\cl{},\xi}$ to $R^{k+1}$. The key difference in the representation \eqref{eq:pointrep} at $k+1^{\text{st}}$ iteration is that the multiplier of $L^k_{\cl{},\xi}$ cannot be negative. The observation that supports this reclassification of $L^k_{\cl{},\xi}$ as a ray at $k+1^\text{st}$ iteration is that for any $(x_0;x)\in K^{k+1}$, $0\le \frac{1}{\alpha_\xi}A_{k+1,\cl{}} (x_0;x) = \mu^{k+1}_1$, and, so, the multiplier $\mu^{k+1}_1$ associated with $L^k_{:,\xi}$ (or $R^{k+1}_{\cl{},1}$) is non-negative. Here, the inequality follows since $\alpha_\xi \ge 0$ and $K^{k+1}$ satisfies $\A_{k+1,\cl{}}(x_0;x)\ge 0$, and the equality follows from \eqref{eq:pointrep} and that, by construction, the remaining columns of $L^{k+1}$ and $R^{k+1}$ are orthogonal to $\A_{k+1,\cl{}}$.
    \item Since the $k+1^\text{st}$ row of $\A$, $A_{k+1,\cl{}}$, is orthogonal to $L^k$, the lineality directions of $K^k$ continue to be those of $K^{k+1}$ as well. Nevertheless, some columns of $R^k$, those with indices in $N^-$, do not correspond to feasible points (see Step~\ref{algstep:defineNsets}). The remaining columns are either orthogonal to $A_{k+1,\cl{}}$ and belong to $N^0$, or are strictly feasible to the constraint and classified as $N^+$. This classification is performed easily by computing $\beta = A_{k+1,\cl{}}R^k$, which are the slacks for each column of $R^k$ on the $k+1^\text{st}$ constraint. By scaling $R^k$, if necessary, we assume that $|\beta_j| \le \mu_j$, for all $j\in N^+\cup N^-$. The algorithm then replaces the columns with indices in $N^-$ with additional feasible columns, one for each pair $(i,j)$ in $N^+\times N^-$, utilizing the following observation.  
    
    Consider reducing the multiplier for a single column, with an index $j$, that is in $N^-$. For $i\in N^+$, we simultaneously reduce $\mu_j$ by $-1/\beta_j$ and $\mu_i$ by $1/\beta_i$ by assigning a multiplier $-1/(\beta_i\beta_j)$ to a new ray $\beta_i R_{\cl{},j} - \beta_j R_{\cl{},i}$. This new ray barely satisfy the $k+1^{\text{st}}$ inequality and is feasible to $K^{k+1}$. The ray $i$ in $N^+$ has $\mu_i\beta_i$ such transfers available, while the ray $j$ requires $-\mu_j\beta_j$ transfers to reduce its multiplier to zero. Then, these transfers are determined by a transportation problem on a complete bipartite graph with nodes in $N^+\cup N^-$, where nodes in $N^+$ (resp. $N^-$) have a supply of $\mu_i\beta_i$ (resp. demand of $-\mu_j\beta_j$). This transportation problem is feasible since $0\le A_{k+1,\cl{}}(x_0;x) = \sum_{i\in N^+}\beta_i\mu_i + \sum_{j\in N^-}\beta_j\mu_j$, \textit{i.e.}\/, supply exceeds demand. Then, we use a solution where the transfer from each node $i\in N^+$ to every node $j\in N^-$ is $(-\mu_i\beta_i\mu_j\beta_j)/(\sum_{i'\in N^+}\mu_{i'}\beta_{i'})$. The feasibility of this solution follows by scaling the supply and demand so that total supply is one unit and shipping the product of supply and demand along each arc from source to destination.
\end{casesp} 
} 

\paragraph{Finiteness of the basic hierarchy:} Before exploring the structure of barycentric coordinates, we argue, as was claimed at the beginning of this section, that a basic hierarchy, oblivious to the structure of these coordinates, is already finite. Since our hierarchy will tighten this basic hierarchy, it will inherit this finiteness property. To find the barycentric coordinates for $P$ instead of its homogenization, $K$, we will replace $x_0$ with $1$ and dehomogenize $R^k$ and $\mu^k$, by scaling those columns of $R^k$ that have positive entries in the first row.

\begin{definition}\label{defn:dehomogenize}
    Let $J=\{j\mid R_{1, j} > 0\}$. We say a pair $\left(V,\lambda(x)\right)$, is a \emph{dehomogenization} of $\left(R,\mu(x_0;x)\right)$ if $\left(V_{\cl{},j},\lambda_{j}(x)\right) = \left(R_{\cl{},j}/R_{1j},R_{1j}\mu_{j}(1;x)\right)$ for $j\in J$ and $\left(V_{\cl{},j},\lambda_{j}(x)\right) = \left(R_{\cl{},j},\mu_j(1;x)\right)$ otherwise. We say $\left(R(x),\mu(1;x)\right)$ are \emph{dehomogenized} if this operation is performed in-place.
\end{definition}

Since $P$ is a polytope, there exists a $k$ such that $R^k_{1,\cl{}}$ is a vector of all ones after $R^k$ has been dehomogenized. Moreover, as will be clear later, we can add a suitable inequality bounding slacks of $n$ linearly independent constraint expressions to ensure this occurs at the end of iteration $n+1$.

\begin{theorem}\label{thm:simplehierarchy}
    Let $\left(R^k,\mu^k(1;x)\right)$ be dehomogenized and $\bar{k}$ be the minimum index at which $P^{\bar{k}} = K^{\bar{k}}\cap\{(x_0;x)\mid x_0=1\}$ is bounded so that $R^k_{1,j} = 1$ for all $j\in[\cl{}p_k]$. For $k\in [\bar{k}\cl{}m]$, we relax $(\D)$ as follows:
  \begin{subequations}\label{eq:DD}
  \begin{alignat}{4}
    &(\DDR^k)&\quad&\min&&\sum_{i=1}^{p_k} \ftilde\left(R^k_{[2\cl{}],i}\mu^k_i(1;x),y\mu^k_i(1;x),\mu^k_i(1;x)\right)\label{eq:objDDk} \\
    &&&\text{s.t.}&&y\mu^k_i(1;x)\in \mu^k_i(1;x) C &\quad& i\in [:p_k]\label{eq:DDkyscale}\\
    &&&&&\mu^k_i(1;x)\ge 0 && i\in [:p_k]\label{eq:DDknonneg}\\
    &&&&&R^k \mu^k(1;x) = (1;x).\label{eq:DDklinprecision}
  \end{alignat}
  \end{subequations}
  Further, we relax $(\DDR^k)$ to a convex optimization problem $\underline{\DDR}^k$ by replacing $\mu^k(1;x)$ (resp. $y\mu^k_i(1;x)$) with a variable $\lambda^k\in \Re^{p_k}$ (resp. $y^{k,i}$).
  Then, $\opt_{\underline{DDR}^{\bar{k}}}\le \ldots\le \opt_{\underline{\DDR}^{m}} = \opt_D$. 
  \end{theorem}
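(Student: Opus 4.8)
The plan is to recognize each $\underline{\DD}^k$, for $k\in[\bar{k}\cl{}m]$, as exactly the disjunctive reformulation $(\DDP_{\V})$ of Lemma~\ref{lemma:disjVrep}, but with the polytope $P$ replaced by $P^k$, and then to read off the asserted ordering from the fact that $P^{\bar k}\supseteq P^{\bar k+1}\supseteq\cdots\supseteq P^m=P$ is a decreasing chain of polytopes. No property of the barycentric coordinates $\mu^k$ beyond the correctness of Algorithm~\ref{alg:ddwithmult} as a producer of $V$-descriptions of $K^k$ is needed for this basic hierarchy.

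\emph{First, the identification.} For $k\ge\bar k$, the set $P^k=K^k\cap\{(x_0;x)\mid x_0=1\}$ is a polytope, $K^k$ is pointed, $L^k$ is empty, and after dehomogenization $R^k_{1,j}=1$ for all $j$, so $K^k=\cone(R^k)$. I would use the standard correctness of Algorithm~\ref{alg:ddwithmult} to conclude that the columns of $V^k:=R^k_{[2\cl{}],\cl{}}$ all lie in $P^k$, include $\extpt(P^k)$, and have convex hull $P^k$; hence $V^k$ is an admissible vertex matrix for $P^k$ in the sense of Lemma~\ref{lemma:disjVrep}. Since $R^k_{1,\cl{}}$ is the all-ones vector, the constraints $\lambda^k\ge 0$, $R^k\lambda^k=(1;x)$ in $\underline{\DD}^k$ amount to $\lambda^k\in\Lambda_{p_k}$ together with $x=V^k\lambda^k$; and once $\mu^k(1;x)$ and $y\mu^k_i(1;x)$ are freed to $\lambda^k$ and $y^{k,i}$, the variable $y$ drops out entirely and $x$ is determined by $\lambda^k$. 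Matching this term by term against $(\DDP_{\V^k})$ for $P^k$ shows the two problems coincide (up to the vacuous extra variable $y$). As the hypotheses on $f$ (continuous, concavo-convex, $f(x,\cdot)$ proper) and on $C$ (compact convex) in Lemma~\ref{lemma:disjVrep} do not involve $P$, and $P^k$ is compact convex, the lemma would give $\opt_{\underline{\DD}^k}=\opt_{\D^k}$, where $(\D^k)$ denotes $\min\{f(x,y)\mid y\in C,\ x\in P^k\}$.

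\emph{Then, the chaining.} Because $K^{k+1}$ is $K^k$ with the single extra inequality $\A_{k+1,\cl{}}(x_0;x)\ge 0$ appended, I get $P^{k+1}=P^k\cap\{x\mid A_{k+1,\cl{}}x\le b_{k+1}\}\subseteq P^k$, and every $P^k$, $k\in[\bar k\cl{}m]$, is nonempty and compact since it contains $P$. Minimizing the fixed continuous $f$ over the nested compact sets $P^{\bar k}\times C\supseteq\cdots\supseteq P^m\times C$ then gives $\opt_{\D^{\bar k}}\le\cdots\le\opt_{\D^m}$, while $P^m=K^m\cap\{x_0=1\}=K\cap\{x_0=1\}=P$ gives $\opt_{\D^m}=\opt_D$. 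Substituting $\opt_{\underline{\DD}^k}=\opt_{\D^k}$ from the previous step delivers $\opt_{\underline{\DD}^{\bar k}}\le\cdots\le\opt_{\underline{\DD}^m}=\opt_D$.

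The one place that needs genuine care is the identification step — verifying that $\underline{\DD}^k$ is literally $(\DDP_{\V^k})$ and that $V^k$ contains every vertex of $P^k$ with convex hull exactly $P^k$ — and this rests entirely on the standard fact that $R^k$ generates $K^k$, hence (since $K^k$ is pointed) contains up to scaling all its extreme rays; this is precisely the correctness property of the double-description procedure recalled around Algorithm~\ref{alg:ddwithmult}, so the obstacle is more bookkeeping than mathematics. Everything else — convexity of $\underline{\DD}^k$, the monotonicity, the terminal equality — is then immediate. As an alternative I could avoid Lemma~\ref{lemma:disjVrep} and instead extract from the Case~2 update of Algorithm~\ref{alg:ddwithmult} a nonnegative matrix $M$ with $R^{k+1}=R^kM$ whose columns sum to one, and map any feasible point of $\underline{\DD}^{k+1}$ to a feasible point of $\underline{\DD}^k$ of no larger objective using the perspective-convexity of the objective; I would keep the lemma-based route as the main line since it is shorter and reuses machinery already developed.
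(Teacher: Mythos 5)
Your proof is correct and takes essentially the same approach as the paper's: identify $\underline{\DD}^k$ with $(\DDP_{\V^k})$ via Lemma~\ref{lemma:disjVrep} applied to $P^k$ with vertex matrix $V^k = R^k_{[2\cl{}],\cl{}}$, and then read off the monotone chain from the nesting $P^{\bar{k}}\supseteq\cdots\supseteq P^m=P$. You are in fact a bit more explicit than the paper about why $V^k$ is admissible for the lemma (its columns lie in $P^k$ and contain $\extpt(P^k)$, even if some are non-extremal), which is a genuine bookkeeping point the paper glosses over.
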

  \begin{proof}
    Recall that Algorithm~\ref{alg:ddwithmult} yields the $V$-representation of $P^k = \{x\mid b_{1\cl{}k} - A_{1\cl{}k,\cl{}}x\}$ at the $k^\text{th}$ iteration. Clearly, $\bar{k}\le m$, since $P$, being a polytope, does not admit any rays. Moreover, since $k\ge \bar{k}$, $R^k_{[2\cl{}],i}$ is the $i^\text{th}$ vertex of $P^k$. To see that $(\DDR^k)$ is a relaxation of $(\D)$, we show that a solution of $(\D)$ can be transformed to a feasible solution of $(\DDR^k)$ with an objective value no higher than in $(\D)$. We choose a solution $(x,y)$ of $(\D)$ such that $x\in P$ and $y\in C$. Then, $x\in P\subseteq P^k$. Therefore, $(1;x) = R^k\mu^k(1;x)$. It follows that $(x,\mu^k(1;x),y)$ satisfies \eqref{eq:DDkyscale}-\eqref{eq:DDklinprecision}. Further,
    \begin{equation*}
        \begin{split}
        \sum_{i=1}^{p_k} \ftilde\left(R^k_{[2\cl{}],i}\mu^k_i(1;x),y\mu^k_i(1;x),\mu^k_i(1;x)\right) =  \sum_{i=1}^{p_k} \mu^k_i(1;x)f\left(R^k_{[2\cl{}],i},y\right)\\
        \le f\left(R^k_{[2\cl{}],\cl{}}\mu^k(1;x),y\right) = f(x,y),
        \end{split}
    \end{equation*}
    where the first equality is because $\ftilde$ is  a homogenization of $f$, the inequality is because $f$ is concave in $x$ for a given $y$ and, for $k\ge \bar{k}$, $\mu^k_i(1;x)\in \Lambda_{p_k}$, and the last equality is because of \eqref{eq:pointrep}.

    The linearization of $(\DDR^k)$ obtained by replacing $\mu^k(1;x)$ (resp. $y\mu^k(1;x)$) with $\lambda^k$ (resp. $y^{k,i}$) yields the relaxation in Lemma~\ref{alg:ddwithmult}. This is because the first constraint in \eqref{eq:DDklinprecision} reduces to $\sum_{i=1}^{p_k}\lambda_i = 1$, while the remaining constraints model $x = \sum_{i=1}^{p_k} R_{[2\cl{}],i}\lambda_i$. It follows from Lemma~\ref{alg:ddwithmult} that $\opt_{\underline{\DDR}^k}$ is the optimal value of $(\D)$ where $P$ has been relaxed to $P^k$. Since $P^k\supseteq P^{k+1}$, we have $\opt_{\underline{\DDR}^{k}} \le \opt_{\underline{\DDR}^{k+1}}$. Moreover, since $P^m = P$, $\opt_{\underline{\DDR}^m} = \opt_D$.
  \qed
  \end{proof}

For $(\AC)$, we use separability with respect to $x$ to construct relaxations at earlier iterations, even though $P^k$ is unbounded.
For simplicity, assume that, for $j\in \{\cl{}\varrho\}$,  $g_j(x)$ are affine functions, so that $P\subseteq\{x\mid x_i\ge 0 \text{ for } i\in \{(\varrho+1)\cl{}\}\}$.
First, we update the initialization step of Algorithm~\ref{alg:ddwithmult} so that:
\begin{equation}\label{eq:alternateinit}
  \begin{split}
&L^0 = (0_{1\times \varrho};\Id_{\varrho}; 0_{(n-\varrho)\times \varrho}),\;
 \theta^0 = x_{[\cl{}\varrho]},\\
&R^0=\left(e_1, (0_{(\varrho+1)\times (n-\varrho)},\Id_{n-\varrho})\right), \text{ and } \mu^0 = (x_0; x_{[\varrho+1:n]}).
  \end{split}
\end{equation}
This initialization starts as if $x_{i}\ge 0$ for $i\in [(\varrho+1)\cl{}]$ are already processed by Algorithm~\ref{alg:ddwithmult} and, so, $x_{(\varrho+1):n}$ are non-negative for each $P^k$. Now, assume $(R^k,\mu^k(1;x))$ is dehomogenized and $L^k$ is empty. Let $g(y) = \left(g_i(y)\right)_{i=0}^n$. Then,
\begin{equation}\label{eq:reformulateACobj}
  \begin{split}
g(y) (1;x) = g(y) R^k\mu^k(1;x) = \sum_{i=1}^{p_k} g(y)R^k_{\cl{},i} \mu^k_i(1;x)\\
 = \sum_{i=1}^{p_k} g\left(\dfrac{y\mu^k_i(1;x)}{\mu^k_i(1;x)}\right)R^k_{\cl{},i} \mu^k_i(1;x),
  \end{split}
\end{equation}
where the last equality uses $\mu^k_i(1;x)\ge 0$ and that homogenization of $g_i(\cdot)$ is zero when the homogenization variable is zero.
\begin{theorem}\label{thm:simpleAChierarchy}
  Let $\left(R^k,\mu^k(1;x)\right)$ be dehomogenized and $J=\left\{i\mid R^k_{1j} = 1\right\}$. Assume that for $j\in\{\cl{}\varrho\}$, $g_j(x)$ is affine and $P\subseteq\left\{x\;\middle|\; x_i\ge 0 \text{ for } i\in \{(\varrho+1)\cl{}\}\right\}$. Let $\bar{k}$ be the smallest index such that $L^{\bar{k}}$ is empty. For $k\in \{\bar{k}\cl{}m\}$, we relax $(\AC)$ as follows:
  \begin{subequations}\label{eq:DAC}
  \begin{alignat}{5}
    &(\ACD^k)&\quad&\min&\quad&\sum_{i=1}^{p_k} g\left(\dfrac{y\mu^k_i(1;x)}{\mu^k_i(1;x)}\right)R^k_{\cl{},i} \mu^k_i(1;x)\label{eq:ACDkobj}\\
    &&&\text{s.t.}&&\eqref{eq:DDkyscale}-\eqref{eq:DDklinprecision}.
  \end{alignat}
\end{subequations}
Further, we relax $(\ACD^k)$ into a convex optimization problem $\underline{\ACD}^k$ by replacing $\mu^k(1;x)$ (resp. $y\mu^k_i(1;x)$) with a variable $\lambda^k\in \Re^{p_k}$ (resp. $y^{k,i}$). We have $\opt_{\underline{\ACD}^{\bar{k}}}\le \ldots\le \opt_{\underline{\ACD}^m} = \opt_{\AC}$. 
\end{theorem}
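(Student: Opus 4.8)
The plan is to mirror the structure of the proof of Theorem~\ref{thm:simplehierarchy}, but now exploiting the separability of the $(\AC)$ objective in the $x$ variables so that we no longer need $P^k$ to be bounded; we only need the lineality space $L^k$ to be empty. First I would establish that $\bar k$ is well-defined and finite: each iteration of Algorithm~\ref{alg:ddwithmult} that executes Case~1 strictly reduces the dimension of the lineality space (it reclassifies a lineality direction as a ray), and with the modified initialization \eqref{eq:alternateinit} the starting lineality space has dimension $\varrho$; since $P$ is a polytope (hence pointed) and $\A$ contains enough rows, $L^k$ becomes empty at some iteration $\bar k \le m$. For $k\ge\bar k$, the columns of $R^k$ are precisely the extreme rays of the pointed cone $K^k$, and after dehomogenization those with $R^k_{1j}=1$ (the index set $J$) correspond to vertices of $P^k$ while the remaining ones (with $R^k_{1j}=0$) correspond to recession directions of $P^k$; the latter lie in $\{x\mid x_i\ge 0 \text{ for } i\in\{(\varrho+1)\cl{}\}\}$'s recession cone, so all their coordinates in positions $\varrho+1,\dots,n$ are nonnegative.

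Next I would show $(\ACD^k)$ is a valid relaxation of $(\AC)$. Take any feasible $(x,y)$ for $(\AC)$; then $x\in P\subseteq P^k$, so by \eqref{eq:pointrep} we have $(1;x)=R^k\mu^k(1;x)$ with $\mu^k(1;x)\ge 0$, and $(x,\mu^k(1;x),y)$ satisfies \eqref{eq:DDkyscale}--\eqref{eq:DDklinprecision}. Feasibility of \eqref{eq:DDkyscale} uses that $y\in C$ and $\mu^k_i(1;x)\ge 0$ imply $y\mu^k_i(1;x)\in \mu^k_i(1;x)C$. For the objective, apply identity \eqref{eq:reformulateACobj} (which holds because $L^k$ is empty, so the representation is $R^k\mu^k(1;x)=(1;x)$ with no lineality term) to write $g_0(y)+\sum_{i=1}^n x_i g_i(y) = g(y)(1;x) = \sum_{i=1}^{p_k} g(y)R^k_{\cl{},i}\,\mu^k_i(1;x)$, which equals the objective \eqref{eq:ACDkobj} evaluated at this point — here I must be careful that the affine $g_j$ for $j\le\varrho$ can be evaluated at $y\mu^k_i(1;x)/\mu^k_i(1;x)$ by homogeneity even for indices $i\notin J$ (i.e.\ recession directions), which is fine since for those $i$ the entries $(R^k_{\cl{},i})_j$ for $j\le\varrho$ multiply only affine $g_j$, while the entries for $j>\varrho$ are nonnegative and multiply convex $g_j$; this is exactly the role of the hypothesis $P\subseteq\{x\mid x_i\ge0,\ i\in\{(\varrho+1)\cl{}\}\}$. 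So $(\ACD^k)$ attains value $\le \opt_{\AC}$, establishing $\opt_{\underline{\ACD}^k}\le\opt_{\AC}$ after the convexifying linearization (which only enlarges the feasible set and cannot raise the objective).

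For the monotonicity and exactness at level $m$, I would argue as in Theorem~\ref{thm:simplehierarchy}: the linearized problem $\underline{\ACD}^k$, obtained by substituting $\lambda^k$ for $\mu^k(1;x)$ and $y^{k,i}$ for $y\mu^k_i(1;x)$, coincides with applying Lemma~\ref{lemma:disjVrep}'s disjunctive relaxation to $(\AC)$ with $P$ replaced by $P^k$ — the constraint $R^k\lambda^k=(1;x)$ splits into $\sum_{i\in J}\lambda^k_i=1$ (the first row) together with $x=\sum_i R^k_{[2\cl{}],i}\lambda^k_i$, which is exactly the $V$-representation of $P^k$ (vertices plus extreme recession directions), and the homogenized convex objective $\sum_i g(y^{k,i}/\lambda^k_i)R^k_{\cl{},i}\lambda^k_i$ is the DP objective for this relaxed problem. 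Since $P^k\supseteq P^{k+1}$ we get $\opt_{\underline{\ACD}^k}\le\opt_{\underline{\ACD}^{k+1}}$, and since $P^m=P$ we get $\opt_{\underline{\ACD}^m}=\opt_{\AC}$. The main obstacle I anticipate is the bookkeeping around the unbounded $P^k$: precisely justifying that, although $(\ACD^k)$ and $\underline{\ACD}^k$ contain "recession-direction" columns of $R^k$ with zero first coordinate, the objective remains well-defined and the relaxation remains valid — this hinges entirely on the affine-versus-convex split of the $g_j$'s and on the sign conditions on the entries of those columns, and getting the homogenization conventions ($\ftilde$ and $g$ evaluated at ratios with a possibly-zero denominator only in affine coordinates) stated cleanly is where the care is needed.
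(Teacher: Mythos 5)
Your argument for validity of $(\ACD^k)$ and for exactness at level $m$ is fine and essentially matches the paper (both rest on the reformulation identity \eqref{eq:reformulateACobj} and, for $k=m$, on $P^m=P$ being bounded). However, there is a genuine gap in your monotonicity step. You write that $\underline{\ACD}^k$ ``coincides with applying Lemma~\ref{lemma:disjVrep}'s disjunctive relaxation to $(\AC)$ with $P$ replaced by $P^k$'' and then conclude $\opt_{\underline{\ACD}^k}\le\opt_{\underline{\ACD}^{k+1}}$ from $P^k\supseteq P^{k+1}$, ``as in Theorem~\ref{thm:simplehierarchy}.'' But Lemma~\ref{lemma:disjVrep} is stated for a polytope with a $V$-representation over $\Lambda_p$, and the corresponding step in the proof of Theorem~\ref{thm:simplehierarchy} uses exactly this ($P^k$ bounded for $k\ge\bar k$ there, where $\bar k$ means boundedness). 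In Theorem~\ref{thm:simpleAChierarchy}, $\bar k$ is defined only by $L^{\bar k}=[]$, so for $\bar k \le k < m$ the polyhedron $P^k$ is generally unbounded and the Lemma~\ref{lemma:disjVrep} shortcut is unavailable; the inequality $\opt_{\underline{\ACD}^k}\le\opt_{\underline{\ACD}^{k+1}}$ does not follow just from set containment without further argument. You flag the unboundedness as a worry for the \emph{validity} step, but the place it actually bites is the \emph{monotonicity} step, which you do not resolve.

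The paper handles this with a direct construction rather than an appeal to Lemma~\ref{lemma:disjVrep}: since $P^k\subseteq P^{k-1}$, one can write $R^k=R^{k-1}\Gamma$ with $\Gamma\ge 0$, and from any feasible $(\lambda^k,(y^{k,i})_i)$ for $\underline{\ACD}^k$ one builds $(\lambda^{k-1},Y^{k-1})=(\Gamma\lambda^k,\,Y^k\Gamma^\intercal)$ feasible for $\underline{\ACD}^{k-1}$. Showing the objective does not increase under this map is exactly where your ``affine-versus-convex split'' observation is needed \emph{quantitatively}: for $j\le\varrho$ one gets equality because $g_j$ is affine, and for $j>\varrho$ one needs sublinearity of the homogenized $g_j$ together with $\Gamma\ge 0$ and $R^{k-1}_{j,i}\ge 0$ (which is where the alternate initialization \eqref{eq:alternateinit} enters), as in \eqref{eq:showACdominate}. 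Your proposal correctly identifies the ingredients but never assembles them into the comparison between consecutive levels, so the chain $\opt_{\underline{\ACD}^{\bar k}}\le\cdots\le\opt_{\underline{\ACD}^m}$ is asserted rather than proved.
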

\begin{proof}
Since \eqref{eq:reformulateACobj} rewrites the objective \eqref{eq:ACDkobj} in the form of \eqref{eq:objDDk}, it follows from Theorem~\ref{thm:simplehierarchy} that $(\ACD)^k$ is a relaxation of $(\AC)$ and $\opt_{\underline{\ACD}^m} = \opt_{\AC}$. To show that $\opt_{\underline{\ACD}^{k}}$ is increasing in $k$ for $k\ge \bar{k}+1$, we construct a solution to $\underline{\ACD}^{k-1}$ with the same objective value as a given solution $(\lambda^k,(y^{k,i})_{i=1}^{p_k})$ to $\underline{\ACD}^k$. Since $P^k\subseteq P^{k-1}$, we write $R^k = R^{k-1}\Gamma$ where $\Gamma \in \Re^{p_{k-1}\times p_k}_+$. Let $\lambda_i^{k-1}$ (resp. $\lambda_i^k$) be the linearization of $\mu_i^{k-1}(1;x)$ (resp. $\mu_i^{k}(1;x)$) and $Y^{k-1}$ (resp. $Y^k$) be a matrix whose columns are $y^{k-1,i}$ (resp. $y^{k,i}$). Then, for any feasible $(Y^k,\lambda_k)$ to $\underline{\ACD}^k$, we define $(Y^{k-1},\lambda^{k-1}) = (Y^{k}\Gamma^\intercal,\Gamma\lambda^k)$. The feasibility of the constructed solution to $\underline{\ACD}^{k-1}$ is easy to verify. Specifically, $Y^{k-1}_{\cl{},i} = Y^k\Gamma^\intercal_{\cl{},i}\in \sum_{j=1}^{p_k}\Gamma_{i,j}\lambda^k_j C = \lambda^{k-1}_i C$. Moreover, 
 \begin{equation}\label{eq:showACdominate}
  \begin{split}
  &\sum_{\ell=1}^{p_{k}} g_j\left(\dfrac{y^{k,\ell}}{\lambda^{k}_\ell}\right)R^{k}_{j,\ell}\lambda^{k}_\ell 
  = \sum_{i=1}^{p_{k-1}}\sum_{\ell=1}^{p_{k}} g_j\left(\dfrac{y^{k,\ell}\Gamma_{i\ell}}{\lambda^{k}_\ell\Gamma_{i\ell}}\right)R^{k-1}_{j,i} \Gamma_{i\ell}\lambda^{k}_\ell\\
  &\ge \sum_{i=1}^{p_{k-1}} g_j\left(\dfrac{Y^k\Gamma^\intercal_{\cl{}i}}{\Gamma_{i,\cl{}}\lambda^{k}}\right)R^{k-1}_{j,i} \Gamma_{i,\cl{}}\lambda^{k} = \sum_{i=1}^{p_{k-1}} g_j\left(\dfrac{y^{k-1,i}}{\lambda^{k-1}_i}\right)R^{k-1}_{j,i} \lambda^{k-1}_i,
  \end{split}
 \end{equation}
 where the first equality is by writing $R^k_{\cl{},\ell} = \sum_{i=1}^{p_{k-1}}R^{k-1}_{\cl{},i}\Gamma_{i\ell}$. The first inequality is an equality for $j\in\{\cl{}\varrho\}$ because $g_j$ is affine. For $j\in \{0,\varrho+1,\ldots,n\}$, the inequality follows since the homogenization of $g$ is sublinear, $\Gamma_{i\ell}\ge 0$, and $R^{k-1}_{j,i}\ge 0$ because of the alternate initialization~\eqref{eq:alternateinit}. Then, by summing \eqref{eq:showACdominate} for $j\in[0\cl{}n]$, we have
 $\opt_{\underline{\ACD}^{k}}\ge \opt_{\underline{\ACD}^{k-1}}$. 
 \qed
\end{proof}

\section{The structure of barycentric coordinates}\label{sec:muproperties}

Theorems~\ref{thm:simplehierarchy} and~\ref{thm:simpleAChierarchy} leveraged the numerical aspects of Algorithm~\ref{alg:ddwithmult}, and we will now explore the symbolic calculations it performs. We begin by considering $P=[0,1]^n$ and show that the barycentric coordinates arising from Algorithm~\ref{alg:ddwithmult} match $x^{S,S'}$, where $S\cap S' = \emptyset$ and $S\cup S'= \{\cl{}k\}$.

\begin{proposition}\label{prop:DDhypercube}
    Let $P=[0,1]^n$ and Algorithm~\ref{alg:ddwithmult} be initialized so that:
    \begin{equation}\label{eq:nonneginit}
      L^0 = [],\;\theta^0 = [],\; R^0=\Id_{n+1},\; \text{ and } \mu^0 = (x_0;x). 
    \end{equation}
    Let $T=[\cl{}k]$ and process the inequalities $x_i\le 1$ for $i\in T$ during the first $k$ iterations. Then, $R^k = [V^k;\bar{R}^k]$, where $V^k$ has $2^k$ columns, which will be indexed using subsets of $\{\cl{}k\}$ and $\bar{R}^k$ has $n-k$ columns which will be indexed $\left[1\cl{}(n-k)\right]$. These matrices $V^k$ and $\bar{R}^k$ are:
    \begin{equation}\label{eq:Boxkrays}
      V^k = \left(1;\chi_S;0_{n-k}\right)_{S\subseteq [:k]} \text{ and } \bar{R}^k = \left[e_{k+1},\ldots,e_n\right].
    \end{equation}
    Similarly, partition $\mu^{k}(x_0;x)$ as $[\nu^k(x_0;x);\omega^k(x_0;x)]$, where $\nu^k$ (resp. $\omega^k$) contain the first (resp. last) $2^k$ (resp. $n-k$) barycentric coordinates. Then, 
    \begin{equation}\label{eq:Boxkmultipliers}
      \nu^k_S = \dfrac{\prod_{i\in S}x_i\prod_{i\in \{\cl{}k\}\backslash S}(x_0-x_i)}{x_0^{k-1}} \text{ and }\omega^k_i = x_i.
    \end{equation} 
  \end{proposition}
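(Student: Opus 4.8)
The plan is a direct induction on $k$, the number of constraints $x_i\le 1$ processed. The base case $k=0$ is exactly the initialization \eqref{eq:nonneginit}: $R^0=\Id_{n+1}$ consists of the column $(1;0_n)=e_0$ (this is $V^0$, since $\chi_\emptyset$ is the empty vector) followed by $e_1,\ldots,e_n$ (this is $\bar{R}^0$), and $\mu^0=(x_0;x)$ agrees with \eqref{eq:Boxkmultipliers} at level $0$ once one reads $x_0^{0-1}$ in the denominator as $x_0^{-1}$, so that $\nu^0_\emptyset=x_0$ and $\omega^0_i=x_i$. For the inductive step the decisive first observation is that only the Else branch of Algorithm~\ref{alg:ddwithmult} (Steps~\ref{algstep:defineNsets}--\ref{algstep:defineE2}) is ever executed: $L^0$ is empty, that branch never creates lineality directions, and Step~\ref{algstep:loopinit} copies $L^k=L^{k-1}$, so the column count of $L^{k-1}$ is $0$ at every iteration and the test at Step~\ref{algstep:ifcondition} fails. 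Hence at iteration $k$ only Steps~\ref{algstep:defineNsets}, \ref{algstep:defineNtot}, \ref{algstep:definemu2}, \ref{algstep:defineR2} act on $(R^{k-1},\mu^{k-1})$, which by the inductive hypothesis has the asserted form with $k$ replaced by $k-1$.

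Next I would classify the columns of $R^{k-1}$ relative to the constraint $x_0-x_k\ge 0$ processed at iteration $k$ by computing the slack vector $\beta=\A_{k,\cl{}}R^{k-1}$. A column $(1;\chi_S;0)$ with $S\subseteq\{\cl{}(k-1)\}$ has slack $1-(\chi_S)_k=1>0$ because $k\notin S$, so all $2^{k-1}$ such columns form $N^+$ with $\beta_i=1$; the ray $e_k$ of $\bar{R}^{k-1}$ has slack $-1<0$, so $N^-=\{e_k\}$ with $\beta_j=-1$; and $e_{k+1},\ldots,e_n$ have slack $0$ and form $N^0$. Then Step~\ref{algstep:defineR2} retains the $N^0$ columns as $\bar{R}^k=[e_{k+1},\ldots,e_n]$, retains the $N^+$ columns $(1;\chi_S;0)$ --- these are $V^k_S$ for $k\notin S$ --- and appends, for each $S\subseteq\{\cl{}(k-1)\}$, the column $\beta_iR^{k-1}_{:,j}-\beta_jR^{k-1}_{:,i}=e_k+(1;\chi_S;0)=(1;\chi_{S\cup\{k\}};0)$, which is $V^k_{S'}$ for $k\in S'=S\cup\{k\}$. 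Together the retained and appended columns are precisely the $2^k$ columns of $V^k$ in \eqref{eq:Boxkrays}; the difference between the algorithm's column order $[\bar{R}^k,V^k]$ and the stated grouping $[V^k,\bar{R}^k]$ is a permutation, applied simultaneously to $\mu^k$, under which \eqref{eq:pointrep} is unaffected.

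Finally, for the multipliers I would evaluate $N_{\text{tot}}=\sum_{i\in N^+}\mu^{k-1}_i\beta_i=\sum_{S\subseteq\{\cl{}(k-1)\}}\nu^{k-1}_S$ and use the elementary identity $\sum_{S\subseteq\{\cl{}(k-1)\}}\prod_{i\in S}x_i\prod_{i\in\{\cl{}(k-1)\}\setminus S}(x_0-x_i)=\prod_{i=1}^{k-1}(x_i+(x_0-x_i))=x_0^{k-1}$, which together with the inductive form of $\nu^{k-1}$ gives $N_{\text{tot}}=x_0^{k-1}/x_0^{k-2}=x_0$. Substituting $\beta$, $N_{\text{tot}}$, and $\mu^{k-1}$ into Step~\ref{algstep:definemu2}: the $N^0$ block is copied, giving $\omega^k=(x_{k+1},\ldots,x_n)$; the modified $N^+$ entry at $S$ becomes $\nu^{k-1}_S(1-x_k/x_0)=\nu^{k-1}_S(x_0-x_k)/x_0=\nu^k_S$ for $k\notin S$; and the appended entry at $(S,e_k)$ becomes $x_k\nu^{k-1}_S/x_0=\nu^k_{S\cup\{k\}}$ for $k\in S\cup\{k\}$. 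These match \eqref{eq:Boxkmultipliers} at level $k$, closing the induction; non-negativity of $\mu^k$ on $K^k$ follows from the correctness of Algorithm~\ref{alg:ddwithmult}, and is also evident since $x_i\ge 0$ and $x_0-x_i\ge 0$ there. The argument is essentially bookkeeping once the ``Else branch only'' observation is made; the only points demanding care are keeping the subset index of $\nu^k$ aligned with the columns of $V^k$ across the split ``$k\in S$ versus $k\notin S$'' and the telescoping identity forcing $N_{\text{tot}}=x_0$, which is exactly what collapses the rational multipliers to the clean product form in \eqref{eq:Boxkmultipliers}. I do not foresee a deeper obstacle.
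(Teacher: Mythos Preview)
Your proposal is correct and follows essentially the same inductive argument as the paper's proof: both verify the base case $k=0$, classify the columns of $R^{k-1}$ against the constraint $x_0-x_k\ge 0$ into $N^+=\{V^{k-1}_S\}_{S\subseteq\{\cl{}(k-1)\}}$, $N^-=\{e_k\}$, $N^0=\{e_{k+1},\ldots,e_n\}$, compute $N_{\text{tot}}=\sum_S\nu^{k-1}_S=x_0$, and then read off the updated rays and multipliers from Steps~\ref{algstep:definemu2} and \ref{algstep:defineR2}. Your write-up is slightly more explicit in two places---you spell out why only the Else branch is ever taken and give the telescoping identity justifying $N_{\text{tot}}=x_0$---and you note the harmless column permutation between the algorithm's output order $(N^0,N^+,N^+\times N^-)$ and the stated grouping $[V^k,\bar{R}^k]$, which the paper glosses over.
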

  \long\def\propDDhypercubeproof{
    We prove the result by induction. For $k=0$, $V^k$ consists of a single column $e_0$ and $\nu^0_S = 1/x_0^{-1} = x_0$. Then, it can be checked that \eqref{eq:nonneginit} is a specialization of \eqref{eq:Boxkrays} and \eqref{eq:Boxkmultipliers} with $k=0$. Now, assume that the result is true at iteration $i$. We show that it holds also at iteration $i+1$. To see this, observe that the inequality processed at iteration $i+1$ is $x_0 - x_{i+1}\ge 0$. The only column of $R^i$ that does not satisfy this inequality is $\bar{R}^i_1$ and the columns in $V^i$ satisfy this inequality strictly. Since $\sum_{S\subseteq \{\cl{}i\}}\nu^i_S = x_0$, we set $N_{\text{tot}} = x_0$ at Step~\ref{algstep:defineNtot}. Then, the first $2^{i+1}$ columns of $R^{i+1}$ are $V^{i+1} = [V^i;W^i]$, where $W^i = (1;\chi_S;1;0_{n-i-1})_{S\subseteq \{\cl{}i\}}$ are obtained by adding $\bar{R}^i_1$ to the columns of $V^i$. Combining, $V^{i+1} = (1;\chi^S;0_{n-i-1})_{S\subseteq {\cl{}i+1}}$. Now, as in Step~\ref{algstep:definemu2}, we compute $\nu_S^{i+1}$ where $S\subseteq\{\cl{}(i+1)\}$ as a multiplier for $V^{i+1}_S$. We have
    \begin{equation*}
      \nu_S^{i+1} = \left\{
        \begin{alignedat}{2}
          &\nu_S^{i} - \dfrac{\nu_S^ix_{i+1}}{x_0} = \dfrac{\prod_{j\in S}x_j\prod_{j\in \{\cl{}(i+1)\}\backslash S}(x_0-x_j)}{x_0^{i}} &\quad& i+1\not\in S\\
          &\dfrac{\nu_{S\backslash\{i+1\}}^ix_{i+1}}{x_0} =  \dfrac{\prod_{j\in S}x_j\prod_{j\in \{\cl{}(i+1)\}\backslash S}(x_0-x_j)}{x_0^{i}} &&i+1\in S.
        \end{alignedat}\right.
    \end{equation*}
    The expressions for $\nu^{i+1}_S$ match regardless of whether $i+1\in S$ or not verifying the first formula in \eqref{eq:Boxkmultipliers}. Finally, $\omega^{i+1}_j = \omega^{i}_j = x_{j+i+1}$ for $j\in \left[1\cl{}(n-i-1)\right]$ since the corresponding columns are tight at this constraint.  
  }
  \iftoggle{allproofsinpaper}{%
  \begin{proof}
    \propDDhypercubeproof
    \qed
  \end{proof}
  }{}
  
  The expressions \eqref{eq:Boxkrays} for $(V^k, R^k)$ obtained at the $k^{\text{th}}$ iteration of DD algorithm provide a $V$-description of the set $X = \bigl\{x\mid 0 \le x_i \le 1 \text{ for } i\in \{\cl{}k\}, x_i\ge 0 \text{ for } i > k\bigr\}$. This property holds more generally for the DD-algorithm. When initiated with the $V$-description of a pointed polyhedral cone (\textit{i.e.}, the cone does not contain lineality directions), the algorithm generates at the $k^{\text{th}}$ iteration a $V$-description of the starting cone intersected with the first $k$ constraints (see, for example, Lemma~3.51 in \cite{conforti2014integer}). Our modified version in Algorithm~\ref{alg:ddwithmult} also returns the barycentric coordinates for this polyhedron~(see, for example, \eqref{eq:Boxkmultipliers} and Proposition~\ref{prop:correctmutheta}). For $X$, these coordinates are given by the expressions detailed in \eqref{eq:Boxkmultipliers}, where $x_0$ is set to $1$ to dehomogenize the representation. The following result follows directly from Proposition~\ref{prop:DDhypercube} due to the permutation-invariance of the constraint set with respect to $x_i$, $i\in\{\cl{}n\}$.

  \begin{corollary}\label{cor:allorders}
    Let $P=[0,1]^n$ and Algorithm~\ref{alg:ddwithmult} be initialized as in \eqref{eq:Boxkrays} and executed for $k$ steps for each $T\subseteq \{\cl{}m\}$ such that $|T|=k$. Then, after relabeling rows in $T$ as $1,\ldots,k$, the execution with $T$ returns matrices $V^{T}$ and $\bar{R}^{T}$ as in \eqref{eq:Boxkrays} with barycentric coordinates $\nu_S^T$ and $\omega_i^T$ given as in \eqref{eq:Boxkmultipliers}.
  \end{corollary}

  By Corollary~\ref{cor:allorders} and \eqref{eq:Boxkmultipliers}, 
  we have $\nu_S^T = x^{S,T\backslash S}$ when $x_0=1$. By defining $S' = T\backslash S$, it follows that all product-factors used in \eqref{eq:RLThypercube} (or, $\ACRLT^k$) can be obtained through various executions of the DD procedure. This also demonstrates that the linear dependencies in Proposition~\ref{prop:expansionhelps} are captured when we algebraically expand $\nu_S^T$. These affine dependencies will be explored in a more general setting in Theorem~\ref{thm:expressmulin}.

  

  Even though the barycentric coordinates for $[0,1]^n$ were polynomials, this is not the case for general polytopes \cite{warren2003uniqueness,wachspress2016rational}. However, it is easy to see that the barycentric coordinates produced by Algorithm~\ref{alg:ddwithmult} are rational functions.

  \begin{proposition}\label{prop:rationalfunction}
    At iteration $k$, Algorithm~\ref{alg:ddwithmult} generates $\mu^k(x_0;x)$ and $\theta^k(x_0;x)$ that are ratios of homogenous polynomials. 
  \end{proposition}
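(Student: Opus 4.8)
The plan is to prove, by induction on the iteration counter $k$, a statement slightly stronger than the one asserted: \emph{at iteration $k$ there is a homogeneous polynomial $D^k$ in $(x_0;x)$ such that every entry of $\mu^k$ and of $\theta^k$ can be written as $P/D^k$ for a homogeneous polynomial $P$ of degree $\deg D^k+1$.} The strengthening is needed because the class ``ratio of two homogeneous polynomials'' is not closed under addition in general, but it does become closed once one fixes a common ``net degree'' (degree of numerator minus degree of denominator) across the summands---and Algorithm~\ref{alg:ddwithmult} builds $\mu^k$ and $\theta^k$ precisely by adding ratios of this kind. (The functional forms in Proposition~\ref{prop:DDhypercube}, which all have net degree $1$, are consistent with this invariant.) The base case is immediate: under each of the initializations considered---Step~\ref{algstep:initialize}, the alternate one \eqref{eq:alternateinit}, and the hypercube one \eqref{eq:nonneginit}---the entries of $\mu^0$ and $\theta^0$ are homogeneous linear forms, so $D^0\equiv 1$ works.

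For the inductive step I would first record the elementary but crucial point that the vectors $\alpha=\A_{k,\cl{}}L^{k-1}$ and $\beta=\A_{k,\cl{}}R^{k-1}$ of Step~\ref{algstep:definealphabeta} have \emph{constant} entries, since $\A$ and the columns of $L^{k-1}$ and $R^{k-1}$ do not depend on $(x_0;x)$. Then the two branches of the loop body are handled separately. In the first branch (Steps~\ref{algstep:definexi}--\ref{algstep:defineL1}), $\mu^k$ and $\theta^k$ are obtained from $\mu^{k-1}$ and $\theta^{k-1}$ by forming constant-coefficient linear combinations and dividing by the nonzero constant $\alpha_\xi$, the sign flips of Step~\ref{algstep:negateL} being harmless; writing these over the common denominator $D^{k-1}$ keeps the numerators homogeneous of the same degree, so $D^k:=D^{k-1}$ suffices. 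In the second branch (Steps~\ref{algstep:defineNsets}--\ref{algstep:defineE2}), $N_{\text{tot}}=\sum_{i\in N^+}\beta_i\mu^{k-1}_i$ equals $N/D^{k-1}$ for a homogeneous $N$ of degree $\deg D^{k-1}+1$, and $N\not\equiv 0$ since $N_{\text{tot}}=\A_{k,\cl{}}(x_0;x)+\sum_{j\in N^-}|\beta_j|\mu^{k-1}_j$ (using $\A_{k,\cl{}}L^{k-1}=0$ and \eqref{eq:pointrep}); each entry of $\mu^k$ from Step~\ref{algstep:definemu2} is then an entry of $\mu^{k-1}$ (indices in $N^0$), or of the form $\mu^{k-1}_i+\sum_{j\in N^-}\beta_j\mu^{k-1}_j\mu^{k-1}_i/N_{\text{tot}}$ (indices in $N^+$), or $\mu^{k-1}_j\mu^{k-1}_i/N_{\text{tot}}$ (indices in $N^+\times N^-$), and placing these and the unchanged $\theta^k=\theta^{k-1}$ over $D^k:=D^{k-1}N$ makes every numerator homogeneous of degree $2\deg D^{k-1}+2=\deg D^k+1$. (When $N^+=\emptyset$ no division occurs and $\mu^k$ is a sub-tuple of $\mu^{k-1}$.) This closes the induction and, a fortiori, proves the proposition.

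The part requiring the most care---indeed the main obstacle---is identifying and maintaining the correct invariant: it is the ``uniform net degree $+1$'' version, not the bare statement of the proposition, that propagates through the recursion, and one must carry out the (routine) degree bookkeeping in the second branch and check that the denominators $\alpha_\xi$ and $N_{\text{tot}}$ never vanish identically. Once the right invariant is fixed, everything else amounts to direct substitution into the update formulas of Algorithm~\ref{alg:ddwithmult}.
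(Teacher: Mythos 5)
Your proof is correct and follows the same inductive route as the paper's: induction on the iteration counter with a case split on the two branches of the loop body, tracing the update formulas for $\mu$ and $\theta$. The substantive improvement is your strengthened invariant---a single common homogeneous denominator $D^k$ with every numerator homogeneous of degree $\deg D^k+1$---and that strengthening is in fact what makes the induction close: the bare property ``ratio of two homogeneous polynomials'' is not preserved by the sums formed at Step~\ref{algstep:definemu2} (it is closed under addition only when the summands share a common net degree), and the paper's one-line ``the result follows from the induction hypothesis'' silently relies on exactly this. You also note the secondary, likewise elided, point that the divisor $N_{\text{tot}}$ cannot vanish identically when $N^+\ne\emptyset$, arguing via $N_{\text{tot}}=\A_{k,\cl{}}(x_0;x)+\sum_{j\in N^-}|\beta_j|\mu^{k-1}_j$. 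In short: same strategy, but your version identifies and fixes a genuine gap in the paper's sketch.
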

\long\def\proprationalfunctionproof{
  The proof follows easily by induction. The base case follows directly from Step~\ref{algstep:initialize}. Assume, now that this condition holds after iteration $i$. Moreover, assume that the lineality directions are not orthogonal to $\A_{i+1,\cl{}}$ so that Case 1 of Algorithm~\ref{alg:ddwithmult} is executed. The coordinates $\mu^{i+1}(x_0;x)$ defined at Steps~\ref{algstep:definemu1} and \ref{algstep:definetheta1} are linear functions of $\mu^i(x_0;x)$. The property then follows from the induction hypothesis. Now, assume that the lineality directions are orthogonal to $\A_{i+1,\cl{}}$. Then, at Step~\ref{algstep:defineNtot}, $N_{\text{tot}}$ is computed as a conic combination of elements of $\mu^{i}(x_0;x)$. At Step~\ref{algstep:definemu2}, $\mu^{i+1}(x_0;x)$ is obtained either as $\mu^i_{j'}(x_0;x)\mu^i_{j}(x_0;x)/N_{\text{tot}}$ or as a linear combination of such quantities added to $\mu_j(x_0;x)$. Once again, the result follows from the induction hypothesis.
}
\iftoggle{allproofsinpaper}{%
\begin{proof}
 \proprationalfunctionproof\qed 
\end{proof}
}{}

  We now provide a concrete example. Here, we set $x_0=1$ to dehomogenize the coordinates.
  \long\def\showallmuexpr{%
  \begingroup
  \allowdisplaybreaks
  \renewcommand*{\arraystretch}{1.3}
  \begin{align*}
    &\mu_2 = \dfrac{2 \left(3-x_{1}-x_{2}-x_{3}\right) x_{2} \left(2-2 x_{1}-x_{2}\right)}{\left(2-x_{1}-x_{2}\right) \left(3-x_{1}-x_{2}\right)}
    \\
    &\mu_3 = \dfrac{\left(3-x_{1}-x_{2}-x_{3}\right) x_{1} \left(2-x_{1}-4 x_{2}\right)}{\left(2-x_{1}-x_{2}\right) \left(3-x_{1}-x_{2}\right)} 
    \\
    &\mu_4 = \dfrac{7 \left(3-x_{1}-x_{2}-x_{3}\right) x_{2} x_{1}}{2 \left(2-x_{1}-x_{2}\right) \left(3-x_{1}-x_{2}\right)} 
    \\
    &\mu_5 = \dfrac{x_{3} \left(2-2 x_{1}-x_{2}\right) \left(2-x_{1}-4 x_{2}\right)}{2 \left(2-x_{1}-x_{2}\right) \left(3-x_{1}-x_{2}\right)} 
    \\
    &\mu_6 = \dfrac{2 x_{3} x_{2} \left(2-2 x_{1}-x_{2}\right)}{\left(2-x_{1}-x_{2}\right) \left(3-x_{1}-x_{2}\right)} 
    \\
    &\mu_7 = \dfrac{x_{3} x_{1} \left(2-x_{1}-4 x_{2}\right)}{\left(2-x_{1}-x_{2}\right) \left(3-x_{1}-x_{2}\right)} 
    \\
    &\mu_8 = \dfrac{7 x_{3} x_{2} x_{1}}{2 \left(2-x_{1}-x_{2}\right) \left(3-x_{1}-x_{2}\right)}
\end{align*}
\endgroup}
\long\def\showonlyfirstmu{\begin{equation*}
\mu_1 = \dfrac{\left(3-x_{1}-x_{2}-x_{3}\right) \left(2-2 x_{1}-x_{2}\right) \left(2-x_{1}-4 x_{2}\right)}{2 \left(2-x_{1}-x_{2}\right) \left(3-x_{1}-x_{2}\right)},
\end{equation*}
where the remaining barycentric coordinates are detailed in Appendix~\ref{app:showremainingmu}.}
  \begin{example}\label{ex:nontrivialbarycentric}
    Consider the polytope for which we provide the $H$-description and $V$-description below:
    \begin{equation*}
    P = \left\{x\in\Re^3\;\middle|\; 
    \begin{aligned}
      &2 - x_1 -4x_2\ge 0, \\
      &2 - 2x_1 - x_2 \ge 0,\\
      &3 - x_1 - x_2 - x_3\ge 0\\
      &x\ge 0
    \end{aligned}\right\}\qquad
    \begingroup
    \renewcommand*{\arraystretch}{1.5}
    R=\begin{pmatrix}
      1 & 1 & 1 & 1 & 1 & 1 & 1 & 1 
      \\
       0 & 0 & 1 & \frac{6}{7} & 0 & 0 & 1 & \frac{6}{7} 
      \\
       0 & \frac{1}{2} & 0 & \frac{2}{7} & 0 & \frac{1}{2} & 0 & \frac{2}{7} 
      \\
       0 & 0 & 0 & 0 & 3 & \frac{5}{2} & 2 & \frac{13}{7} 
    \end{pmatrix}
    \endgroup
    \end{equation*}
    Using Algorithm~\ref{alg:ddwithmult}, we obtain 
    \iftoggle{allproofsinpaper}{\showallmuexpr}{\showonlyfirstmu}
    It can be verified that each $\mu_i$ is non-negative over the polytope and $\sum_{i=1}^8\mu_i = 1$. As expected, $\mu_j$ evaluates to $1$ at $R_{\cl{},j}$ and to $0$ at $R_{\cl{},i}$ for $i\ne j$. The polytope $P$ in this example is a simple polytope, \textit{i.e.}, an $n$-dimensional polytope, with $n=3$, where each vertex is adjacent to $n$ edges. For such polytopes, an explicit formula for barycentric coordinates is available~\cite{warren2007barycentric}. In our computation, we will denote the determinant of matrix $D$ as $|D|$. For the vertex indexed $1$, the set of tight inequalities is $\Id_{3}x\ge 0$. Since $\left|\Id_{3}\right| = 1$, and the inequalities which have a slack at this vertex are $2-x_1-4x_2\ge 0$, $2-2x_1-x_2\ge 0$, and $3-x_1-x_2-x_3\ge 0$, we obtain:
    \begin{equation*}
      \omega_1 = |\Id_{3}|\times (2-x_1-4x_2)(2-2x_1-x_2)(3-x_1-x_2-x3).
    \end{equation*}
    Similarly, we compute $\omega_2,\ldots,\omega_8$ and obtain $\sum_{i=1}^8 \omega_i = 2 \bigl(x_{1}+x_{2}-2\bigr) \bigl(x_{1}+x_{2}-3\bigr)$. Then, the barycentric coordinate for $(0,0,0)$ is $\omega_1/\sum_{i=1}^8 \omega_i$ \cite{warren2007barycentric} and matches the expression found using Algorithm~\ref{alg:ddwithmult}. \qed
  \end{example}

  We used Algorithm~\ref{alg:ddwithmult} to derive barycentric coordinates for specially structured polyhedral sets in Proposition~\ref{prop:DDhypercube} and Example~\ref{ex:nontrivialbarycentric}. That the algorithm produces such coordinates for all polyhedral cones will follow from the description in Appendix~\ref{app:ddalgorithm} and the formulae in Algorithm~\ref{alg:ddwithmult}. Next, we will verify this fact, albeit focusing on a transformation that reveals new inequalities for the relaxation hierarchy. These equalities generalize \eqref{eq:affinerelation} when $T\backslash S$ is a singleton to the setting of arbitrary polyhedra. Specifically, we will show that, as long as $N^+\ne \emptyset$ during the $k+1^{\text{st}}$ iteration, $\left(\theta^{k}(x_0;x);\mu^{k}(x_0;x)\right)$ is affinely related to $\left(\theta^{k+1}(x_0;x);\mu^{k+1}(x_0;x)\right)$ and the inequalities $\mu^{k+1}(x_0;x)\ge 0$ imply the inequalities $\mu^{k}(x_0;x) \ge 0$. We first introduce some notation and then use it to describe this affine transformation. Assume $L^k_{\cl{},\xi}$ is not orthogonal to $\A_{k+1,\cl{}}$, and, therefore, during iteration $k+1$, $q_k\ne 0$ and $\alpha\ne 0$. Then, we let:
\begin{equation}\label{eq:firstFGD}
\begin{aligned}
  &\NiceMatrixOptions{cell-space-limits = 2pt}\setlength\arraycolsep{0.5em}
  F^{k+1} = 
  \begin{pNiceMatrix}
    \Id_{(\xi-1)} & 0_{(\xi-1)\times (q_k-\xi)}\\
    0 & -\sign(\alpha_\xi)\alpha_{[(\xi+1)\cl{}q_k]}\\
    0_{(q_k-\xi)\times(\xi-1)} & |\alpha_\xi| \Id_{(q_k-\xi)}
  \end{pNiceMatrix}, 
  G^{k+1} = \begin{pNiceMatrix}
    0 & 0_{(\xi-1)\times (p_k)}\\
    1 & -\sign(\alpha_\xi)\beta\\
    0 & 0_{(q_k-\xi)\times(p_k)}  
  \end{pNiceMatrix}\\
  &D^{k+1} = 
  \begin{pmatrix}
    0_{1\times p_k} & |\alpha_\xi| \Id_{p_{k}}
  \end{pmatrix}.
\end{aligned}
\end{equation}
On the other hand, if  $L^k$ is empty or orthogonal to $\A_{k+1,\cl{}}$, we have $q_k=0$ or $\alpha=0$. Let $\beta$ be as computed in Step~\ref{algstep:definealphabeta}, and $N^0$, $N^-$, and $N^+$ classify columns of $R^k$ as at Step~\ref{algstep:defineNsets} during the $k+1^{\text{st}}$ iteration of Algorithm~\ref{alg:ddwithmult}. Then, we write
\begin{equation}\label{eq:secondFGD}
\NiceMatrixOptions{cell-space-limits = 2pt}
\begin{aligned}
  &F^{K+1} = \Id_{q_k}, G^{k+1}=0_{p_k\times \left(|N^0|+(|N^+|+1)\times|N^-|\right)}\\
  &\NiceMatrixOptions{cell-space-limits = 2pt}
  D^{k+1} = 
  \begin{pNiceArray}{!{\quad}c!{\quad}|c|ccc}
    \Id & \phantom{\quad}0\phantom{\quad} & 0 & 0 & 0\\\hline
    \Block{3-1}<\LARGE>{0} & \Block{3-1}<\LARGE>{\Id} & -\beta_{|N^0|+|N^+|+1\cl{}} & 0 & 0\\
    &&0 & \Ddots & 0\\
    &&0 & 0 & -\beta_{|N^0|+|N^+|+1\cl{}} \\\hline
    0&0& \beta_{|N^0|+1}\Id & \Cdots & \beta_{|N^0|+|N^+|}\Id
  \end{pNiceArray},
\end{aligned}
\end{equation}
where we assume for notational convenience that entries of $\beta$ and, correspondingly columns of $R^k$, are permuted such that indices are ordered as follows: first are those that belong to $|N^0|$; second, those in $|N^+|$; and finally, those in $|N^-|$. Correspondingly, for $D_{K+1}$, vertical lines separate column blocks of size $|N^0|$, $|N^+|$, and $|N^+|\times |N^-|$ respectively and horizontal lines separate row blocks of size $|N^0|$, $|N^+|$, and $|N^-|$ respectively.
\begin{theorem}\label{thm:expressmulin}
  Assume that at iteration $k+1$ of Algorithm~\ref{alg:ddwithmult}, either $q_k\ne 0$ and $\alpha\ne 0$ or $N^+\ne \emptyset$. Then, the symbolic expressions for the barycentric coordinates satisfy:
  \begin{equation}\label{eq:multipliersk+1tomultipliersk}
    \begin{pmatrix}\theta^k(x_0;x)\\\mu^k(x_0;x)\end{pmatrix} = (P^{k+1}_{\pi})^\intercal
    \begin{pmatrix} F^{k+1} & G^{k+1}\\ 0 & D^{k+1}\end{pmatrix} \begin{pmatrix} \theta^{k+1}(x_0;x)\\\mu^{k+1}(x_0;x)\end{pmatrix},
  \end{equation}
  where $P^{k+1}_\pi\in \Re^{(q_k+p_k)\times (q_k+p_k)}$ is a permutation matrix that does not resequence $\theta^k(x_0;x)$ but permutes the entries of $\mu^k(x_0;x)$ so that, at iteration $k+1$, the indices in $N^0$ are followed by those in $N^+$ and, finally, by $N^-$. If $N^+=\emptyset$ and either $q_k=0$ or $\alpha=0$, \eqref{eq:multipliersk+1tomultipliersk} still holds numerically. Specifically, using $P^{k+1}_\pi$, the expressions for $\mu^k_j(x_0;x)$, where $j\in N^-$, are dropped at iteration $k+1$, thereby enforcing that they evaluate to zero for all $(x_0;x)\in K^{k+1}$. Also, $F^{k+1}:\Re^{q_k\times q_{k+1}}$, $G^{k+1}:\Re^{q_k\times p_{k+1}}$, and $D^{k+1}:\Re^{p_k\times p_{k+1}}$ are constant matrices such that $D^{k+1}\ge 0$. Regardless of whether \eqref{eq:multipliersk+1tomultipliersk} holds symbolically,
  \begin{equation}\label{eq:LkRktoLk+1Rk+1}
    \begin{pmatrix}L^{k+1} & R^{k+1}\end{pmatrix} = \begin{pmatrix}L^k & R^k\end{pmatrix}(P^{k+1}_\pi)^\intercal \begin{pmatrix}F^{k+1} & G^{k+1}\\ 0 & D^{k+1}\end{pmatrix}.
  \end{equation}
\end{theorem}
\begin{proof}
  Let $\alpha$ and $\beta$ be as computed during iteration $k+1$ of Algorithm~\ref{alg:ddwithmult}.  First, assume that at iteration $k+1$ of Step~\ref{algstep:ddforloop}, $q_{k}> 0$ and $\alpha \ne 0$. We will temporarily index the columns of $\theta^{k+1}$ using the indices of the columns of $\theta^k$ from which they are  computed, \textit{i.e.},\/ $\{1,\ldots,\xi-1,\xi+1,\ldots,p_k\}$ and those of $\mu^{k+1}$ will be indexed $\{0,1,\ldots,p_{k}\}$, where index $0$ corresponds to the new ray $L^k_{\cl{},\xi}$. We will also hide the function arguments for $\theta$ and $\mu$ from either iteration. For $i< \xi$, $\theta^{k}_i = \theta^{k+1}_i$ and, for $i > \xi$, $\theta^k_i = \alpha_{\xi}\theta^{k+1}_i$. Step~\ref{algstep:negateL} ensures $\alpha_{\xi} > 0$ by switching the sign of $L^k_{\cl{}\xi}$, if required. Then, we have $\theta^k_{\xi} = \mu^{k+1}_0 - \sum_{i > \xi}\alpha_j\theta^{k+1}_{i} - \sum_{j=1}^{p_k} \beta_j\mu^{k+1}_j$. Finally, for $j\in\{\cl{}p_k\}$, $\mu^k_j = \alpha_{\xi}\mu^{k+1}_j$. These calculations are summarized in \eqref{eq:firstFGD} and \eqref{eq:multipliersk+1tomultipliersk}. It is easily checked that $L^{k+1}$ and $R^{k+1}$ computed in Steps~\ref{algstep:defineR1} and \ref{algstep:defineL1} match the calculations in \eqref{eq:firstFGD} and \eqref{eq:LkRktoLk+1Rk+1}.
  
  Now, assume that either $q_k=0$ or $\alpha = 0$. In this case, if $j\in N^0\cup N^+$, this column is retained in $R^{k+1}$ and will be temporarily indexed as $j$, while the newly created columns in $N^+\times N^-$ will be indexed $(i,j)$ for $i\in N^+$ and $j\in N^-$.
  For $j\in N^0$, $\mu^{k+1}_j = \mu^{k}_j$. Now, consider $j\in N^-$. Then, if $N^+\ne\emptyset$,
   \begin{equation*}\label{eq:mukjN-}
    0\le \sum_{i\in N^+}\beta_i\mu^{k+1}_{(i,j)} = \sum_{i\in N^+}\beta_i \frac{\mu^k_i\mu^k_j}{\sum_{i'\in N^+}\mu^k_{i'}\beta_{i'}} = \mu^k_j.
   \end{equation*}
   On the other hand, if $N^+ = \emptyset$, the columns in $\mu^k_j$, $j\in N^-$ are dropped. However, in this case, for $(x_0;x)\in K^{k+1}$, the following holds numerically: $0 \le \A_{k+1,\cl{}}R^k\mu^k_j(x_0;x) = \beta \mu^k(x_0;x)$. Since each term in the last calculation is non-positive, $\mu^k_j(x_0;x)$ must be zero if $j\in N^-$.
   Now, consider $i\in N^+$. Then,
   \begin{equation}\label{eq:muNplus}
   \begin{split}
    &0\le \mu^{k+1}_i - \sum_{j\in N^-}\beta_j\mu^{k+1}_{(i,j)} \\
    &\quad = \mu^k_i + \sum_{j\in N^-}\frac{\beta_j\mu^k_i\mu^k_j}{\sum_{i'\in N^+}\mu^k_{i'}\beta_{i'}} - \sum_{j\in N^-}\frac{\beta_j\mu^k_i\mu^k_j}{\sum_{i'\in N^+}\mu^k_{i'}\beta_{i'}} 
    = \mu^k_i.
   \end{split}
  \end{equation}
  These calculations are shown in \eqref{eq:secondFGD} and \eqref{eq:multipliersk+1tomultipliersk}. It can be easily seen that the calculations in Step~\ref{algstep:defineR2} are the same as those in \eqref{eq:secondFGD} and \eqref{eq:LkRktoLk+1Rk+1}.
   \qed
\end{proof}

Theorem~\ref{thm:expressmulin} plays an important role in our proposed hierarchy since it is typically the case that the expressions for $\left(\mu^k(x_0;x);\theta^k(x_0;x)\right)$ involve polynomials of smaller degree than those in $\left(\mu^{k+1}(x_0;x);\theta^{k+1}(x_0;x)\right)$. In other words, this result allows us to relate higher-degree polynomials terms with those having smaller degrees. We will show that a similar property does not hold for RLT in the polyhedral case. 

\begin{example}\label{ex:affineDDnotRLT}
Consider the polyhedron described by
\begin{equation*} 
P = \left\{(x_1,x_2)\;\middle|\; 
\begin{aligned}
3x_1-x_2\ge 0,\\
-x_1 + 4x_2\ge 0,\\ 
1 + 10x_1-10x_2\ge 0,\\
1 + x_1 - 3x_2\ge 0
\end{aligned}\right\}.
\end{equation*}
The DD algorithm at iteration 4 (resp. iteration 3) yields barycentric coordinates for $P$ (resp. polyhedron defined by its first three inequalities). The coordinates at iteration 4 (resp. iteration 3) are shown on the right-hand-side (resp. left-hand-side) of the following instantiation of \eqref{eq:multipliersk+1tomultipliersk} with $k=3$ for $P$,
\begin{equation}\label{eq:affinerel}
  \begin{aligned}
&\frac{1}{33}\begin{bmatrix}
33(1 -10 x_{2}+10 x_{1})
\\
3(4 x_{2} - x_{1})
\\
3x_{1}-x_{2} 
\end{bmatrix} \\
& {} = \frac{1}{d(x)}\begin{bmatrix}
1 & 0 & 87 & 0 
\\
 0 & 1 & 0 & 87 
\\
 0 & 0 & 1 & 12 
\end{bmatrix}
\begin{bmatrix}
33 \left(1 -10 x_{2}+10 x_{1}\right) \left(1 -3 x_{2}+x_{1}\right)
\\
3\left(4 x_{2}-x_{1}\right) \left(1 -3 x_{2}+x_{1}\right) 
\\
\left(3 x_{1}-x_{2}\right) \left(1 -10 x_{2}+10 x_{1}\right)
\\
33(3x_1-x_2)(4x_2-x1)
\end{bmatrix}.
\end{aligned}
\end{equation}
Here, $d(x) = 363 + 3243x_1 - 2046x_2 = 33\bigl(11(1+10x_1-10x_2) + 12(4x_2-x_1)\bigr)$. The equality~\eqref{eq:affinerel} exemplifies that $\mu^4\ge 0$ implies $\mu^3\ge 0$, since the constant matrix on the right-hand-side has non-negative entries. This in turn implies that the first three constraints defining $P$ are implied by $\mu^4\ge 0$ and the affine relation \eqref{eq:affinerel}. In contrast, first-level RLT relaxation of $P$ does not imply these inequalities. To see, write $P=\{x\mid M(1;x)\ge 0\}$ where
\begin{equation*} 
M = \begin{bmatrix}
0 & 3 & -1 
\\
 0 & -1 & 4 
\\
 1 & 10 & -10 
\\
 1 & 1 & -3 
\end{bmatrix}, \text{ and let }
X = \begin{bmatrix}
1 & x_1 & x_2\\
x_1 & x_1^2 & x_1x_2\\
x_2 & x_1x_2 & x_2^2
\end{bmatrix}.
\end{equation*}
The RLT inequalities are derived from $MXM^\intercal \ge 0$ after linearizing $x_1^2$, $x_1x_2$, and $x_2^2$ as $x_{11}$, $x_{12}$, and $x_{22}$ respectively. Then, the projection of this polyhedron to the space of $(x_1,x_2)$ variables is $\Re^2$. This demonstrates that RLT relaxation does not capture any of the structure of $P$. Specifically, the point $(x_1,x_2) = (-1,-1)$, although infeasible to $P$, can be extended by defining $(x_{11},x_{12},x_{22}) = (40,13,4)$ to satisfy all RLT inequalities. \qed
\end{example}

\begin{remark}\label{rmk:other_ineq}
Observe that if $N^+=\emptyset$ at the $k^{\text{th}}$ iteration, we drop the columns in $N^-$, or equivalently, we impose $\mu^{k-1}_j = 0$ for all $j\in N^{-}$. Then, these constraints, the affine relations in Theorem~\ref{thm:expressmulin}, and $\mu^{k}\ge 0$ imply $\A_{k\cl{}}R^{k-1}\mu^{k-1} \ge 0$. To see this, when $\mu^{k-1}_i > 0$ for some $i\in N^+$, observe that $N_{\text{tot}} > 0$ and
\begin{equation*}
0\le \mu^k_i = \mu^{k-1}_i + \sum_{j\in N^-}\mu_j^{k-1}\mu_i^{k-1}\beta_j = \mu^{k-1}_i \frac{\A_{k\cl{}}R^{k-1} \mu^{k-1}}{N_{\text{tot}}},
\end{equation*}
which implies that $\A_{k\cl{}}R^{k-1} \mu^{k-1} \ge 0$. If $\mu^{k-1}_i =0$ for all $i\in N^+$ and $N^+\ne \emptyset$, then \eqref{eq:mukjN-} implies that $\mu^{k-1}_j = 0$. If $N^+= \emptyset$, we require that $\mu^{k-1}_j = 0$. Therefore, in both cases, by definition of $N^0$, we have $\A_{k\cl{}}R^{k-1} \mu^{k-1} = \sum_{j\in N^0} \A_{k\cl{}}R^{k-1}_{\cl{}j} \mu^{k-1}_j = 0$. \qed
\end{remark}

The inequalities $\A_{t\cl{}}R^{k-1}\mu^{k-1} \ge 0$ for $t > k$ are clearly valid, but not implied since Algorithm~\ref{alg:ddwithmult} has not utilized inequalities indexed $k+1,\ldots,m$ until the $k^{\text{th}}$ iteration. 

\begin{proposition}\label{prop:correctmutheta}
    At iteration $k$,  $R^k$, $\mu^k(x_0;x)$, $L^k$, and $\theta^k(x_0;x)$ satisfy \eqref{eq:pointrep} for every $(x_0;x)\in K^k$. 
\end{proposition}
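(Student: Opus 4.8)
The plan is to prove \eqref{eq:pointrep} at every iteration by induction on $k$, letting Propositions~\ref{prop:expressmulin} and~\ref{prop:rationalfunction} carry the load so that what remains is elementary matrix algebra together with a short sign check. For the base case $k=0$ I would simply substitute the initialization of Step~\ref{algstep:initialize}: $R^0\mu^0+L^0\theta^0 = e_0\,x_0 + (0;\Id_n)x = (x_0;x)$, while $\mu^0=(x_0)\ge 0$ exactly on $K^0=\{(x_0;x)\mid x_0\ge 0\}$; the alternate initializations \eqref{eq:nonneginit} and \eqref{eq:alternateinit} are checked the same way.

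For the inductive step, assume \eqref{eq:pointrep} holds at iteration $k$ and note that $K^{k+1}\subseteq K^k$, since passing from $K^k$ to $K^{k+1}$ only appends the constraint $\A_{k+1,\cl{}}(x_0;x)\ge 0$. Let $M^{k+1}:=(P^{k+1}_\pi)^\intercal\left(\begin{smallmatrix}F^{k+1}&G^{k+1}\\0&D^{k+1}\end{smallmatrix}\right)$ be the common matrix appearing in \eqref{eq:multipliersk+1tomultipliersk} and \eqref{eq:LkRktoLk+1Rk+1}, so that $(\theta^k;\mu^k)=M^{k+1}(\theta^{k+1};\mu^{k+1})$ and $(L^{k+1},R^{k+1})=(L^k,R^k)M^{k+1}$. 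The representation equality at iteration $k+1$ then falls out by associativity: for $(x_0;x)\in K^{k+1}$ we get $L^{k+1}\theta^{k+1}+R^{k+1}\mu^{k+1}=(L^k,R^k)M^{k+1}(\theta^{k+1};\mu^{k+1})=(L^k,R^k)(\theta^k;\mu^k)=L^k\theta^k+R^k\mu^k=(x_0;x)$, the last step being the inductive hypothesis applied on $K^k\supseteq K^{k+1}$. In the degenerate subcase where \eqref{eq:multipliersk+1tomultipliersk} holds only numerically on $K^{k+1}$ --- namely $N^+=\emptyset$ with $q_k=0$ or $\alpha=0$, where the expressions $\mu^k_j$ for $j\in N^-$ are dropped --- this chain still goes through, because $0\le\A_{k+1,\cl{}}(x_0;x)=\beta\mu^k(x_0;x)=\sum_{j\in N^-}\beta_j\mu^k_j(x_0;x)$ with every summand non-positive forces those dropped coordinates to vanish on $K^{k+1}$.

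It remains to verify $\mu^{k+1}(x_0;x)\ge 0$ on $K^{k+1}$, by a short case split on the branch taken at Step~\ref{algstep:ifcondition}. In Case~1 ($q_k>0$ and $\alpha\ne 0$), after the possible sign flip of Step~\ref{algstep:negateL} we have $\alpha_\xi>0$, a nonzero constant, so no division issue arises here; the coordinate of the reclassified column $R^{k+1}_{\cl{},1}=L^k_{\cl{},\xi}$ equals $\A_{k+1,\cl{}}(x_0;x)/\alpha_\xi\ge 0$ on $K^{k+1}$ --- since every other column of $(L^{k+1},R^{k+1})$ is orthogonal to $\A_{k+1,\cl{}}$ and the equality just proved holds --- while the remaining coordinates equal $\mu^k_j/\alpha_\xi\ge 0$ by the hypothesis. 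In Case~2 ($q_k=0$ or $\alpha=0$), the $N^0$-coordinates are unchanged copies of $\mu^k$; each pair-coordinate is $\mu^k_i\mu^k_j/N_{\text{tot}}\ge 0$; and each $N^+$-coordinate simplifies, via $N_{\text{tot}}+\sum_{j\in N^-}\beta_j\mu^k_j=\sum_c\beta_c\mu^k_c=\A_{k+1,\cl{}}(x_0;x)$, to $\mu^k_i\,\A_{k+1,\cl{}}(x_0;x)/N_{\text{tot}}\ge 0$ on $K^{k+1}$; when $N^+=\emptyset$ the $N^-$-coordinates are dropped, consistent with their vanishing on $K^{k+1}$ just noted.

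The one place I expect to need more than bookkeeping is the well-definedness of the Case~2 expressions at points of $K^{k+1}$ where the denominator $N_{\text{tot}}=\sum_{i\in N^+}\mu^k_i\beta_i$ vanishes, i.e. where every $\mu^k_i$ with $i\in N^+$ is zero. By Proposition~\ref{prop:rationalfunction} the coordinates are ratios of homogeneous polynomials, so \eqref{eq:pointrep} becomes a polynomial identity once denominators are cleared and therefore, holding on the dense set where $N_{\text{tot}}\ne 0$, holds on all of $K^{k+1}$; alternatively one checks directly that at such a degenerate point $N_{\text{tot}}\ge\A_{k+1,\cl{}}(x_0;x)\ge 0$ additionally forces $\mu^k_j=0$ for $j\in N^-$, so every $N^+$- and $N^-$-indexed coordinate vanishes and the representation collapses to the contribution of the retained $N^0$-columns together with $L^{k+1}=L^k$, which is exactly the inductive hypothesis. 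This closes the induction.
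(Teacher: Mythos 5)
Your proof is correct and follows essentially the same inductive route as the paper's: both invoke Proposition~\ref{prop:expressmulin} (via \eqref{eq:multipliersk+1tomultipliersk} and \eqref{eq:LkRktoLk+1Rk+1}) to push the representation from iteration $k$ to $k+1$, handle the $N^+=\emptyset$ degeneracy by observing the dropped coordinates vanish on $K^{k+1}$, and settle non-negativity by the same two-case split on the branch of Step~\ref{algstep:ifcondition}. The extra paragraph you add on well-definedness of the Case~2 expressions at points where $N_{\text{tot}}$ vanishes is a sound observation that the paper itself defers, and only partially addresses, in Proposition~\ref{prop:positivemu} and the remark that follows it.
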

\long\def\propcorrectmuthetaproof{
  We use induction to show \eqref{eq:pointrep} holds. The base case follows by direct verification. Assume that \eqref{eq:pointrep} holds after iteration $i$. If $N^+\ne\emptyset$, by Theorem~\ref{thm:expressmulin},
  \begin{equation*}
      \begin{split}
      &(x_0;x) = \begin{pmatrix}L^i & R^i\end{pmatrix} \begin{pmatrix} \theta^i(x_0;x)\\ \mu^i(x_0;x)\end{pmatrix} \\
      &= \begin{pmatrix}L^i & R^i\end{pmatrix} (P^i_\pi)^\intercal \begin{pmatrix}F^{i+1} & G^{i+1}\\ 0 & D^{i+1}\end{pmatrix}\begin{pmatrix} \theta^{i+1}(x_0;x)\\\mu^{i+1}(x_0;x)\end{pmatrix} = \begin{pmatrix}L^{i+1} & R^{i+1}\end{pmatrix} \begin{pmatrix} \theta^{i+1}(x_0;x)\\\mu^{i+1}(x_0;x)\end{pmatrix},
      \end{split}
  \end{equation*}
  where the first equality is by induction hypothesis, the second is by \eqref{eq:multipliersk+1tomultipliersk} and the third is by \eqref{eq:LkRktoLk+1Rk+1}. If $N^+ = \emptyset$, the argument is similar once we observe that, for $(x_0;x)\in K^{i+1}$, we have $\mu^i_j(x_0;x) = 0$ for $j\in N^-$. This allows us to restrict attention to columns of $R^i$ with indices in $N^0$. 
  
  The non-negativity of $\mu^{i+1}(x_0;x)$ for $(x_0;x) \in K^{i+1}$ follows from Appendix~\ref{app:ddalgorithm}. We summarize the main ideas. If $L^i$ is not orthogonal to $\A_{i+1,\cl{}}$, all $\mu^{i+1}$ except $\mu^{i+1}_1$ are positive scalar multiples of $\mu^i$ elements and are, therefore, non-negative. The non-negativity of  $\mu^{i+1}_1$ follows since it equals $1/|\alpha_{\xi}| \A_{i+1,\cl{}}(x_0;x)$. If $L^i$ is orthogonal to $\A_{i+1,\cl{}}$ then it follows that (i) for $(j,j')\in N^+\times N^-$, $\mu^{i+1}_{(j,j')}(x_0;x) = \mu^{i}_j(x_0;x)\mu^{i}_{j'}(x_0;x)/N_{\text{tot}}(x_0;x)$ is non-negative since $N_{\text{tot}}(x_0;x), \mu^i(x_0;x)\ge 0$, and (ii) for $j\in N^+$, \begin{equation*}
    \begin{split}
    \mu^{i+1}_j(x_0;x) = \mu^{i}_j(x_0;x) + \dfrac{\sum_{j'\in N^-} \mu_{j'}^{i}(x_0;x)\mu_{j}^{i}(x_0;x)\beta_j}{N_{\text{tot}}(x_0;x)}\\ 
    = \mu^{i}_j(x_0;x)\dfrac{\A_{i+1,\cl{}}(x_0;x)}{N_{\text{tot}}(x_0;x)}\ge 0.
    \end{split}
  \end{equation*}
  Therefore, combining the cases, $\mu^{i+1}(x_0;x)\ge 0$. 
}
\iftoggle{allproofsinpaper}{%
\begin{proof}
  \propcorrectmuthetaproof\qed
\end{proof}}{}

We claim that Algorithm~\ref{alg:ddwithmult} expresses $P^k$ as the Minkowski sum of a polytope, a pointed cone, and a lineality space. The pointed cone is formed by those columns of $R^k$ whose first entry is $0$ and the vertices of the polytope are obtained by dehomogenizing the columns of $R^k$ with positive first entry. However, this requires justification, since rays of $K^k$ can hide lineality directions, for example, if $R^k$ contains two opposing rays. The next result shows that rays in $R^k$ produced by Algorithm~\ref{alg:ddwithmult} do not contain lineality directions. 

\begin{lemma}\label{lem:RpointedWithL}
    There does not exist $(\omega,\nu )\ne 0$ such that $\omega\in \Re^{q_k}$, $\nu \in \Re^{p_k}_+$, and $R^k\nu = L^k\omega$. 
    Equivalently, lineality directions of $K^k$ are not non-negative combinations of columns of $R^k$ and columns of $L^k$ are linearly independent.
  \end{lemma}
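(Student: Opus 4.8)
I would prove the displayed non-existence statement by induction on the iteration index $k$, obtaining the ``equivalently'' reformulation at the end. The base case $k=0$ is immediate from Step~\ref{algstep:initialize}: there $L^0=(0;\Id_n)$ and $R^0$ is the single column $e_0$, so if $R^0\nu=L^0\omega$ with $\nu\ge0$, reading off the first coordinate forces the scalar $\nu=0$, and then $L^0\omega=0$ forces $\omega=0$; the alternative initializations \eqref{eq:nonneginit} and \eqref{eq:alternateinit} are dispatched identically.

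For the inductive step, assume the statement holds at iteration $k$ and suppose $R^{k+1}\nu=L^{k+1}\omega$ with $\nu\ge0$. I would substitute the factorization \eqref{eq:LkRktoLk+1Rk+1}, which by Proposition~\ref{prop:expressmulin} is available at every iteration (including when $N^+=\emptyset$), and use that $(P^{k+1}_\pi)^\intercal$ leaves the $L^k$-block fixed and merely permutes the columns of the $R^k$-block. This rewrites the hypothesis as
\[
  R^k\bigl(\Pi^\intercal D^{k+1}\nu\bigr)=L^k\bigl(F^{k+1}\omega-G^{k+1}\nu\bigr),
\]
where $\Pi$ is the induced column permutation of $R^k$. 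Since $D^{k+1}\ge0$ and $\nu\ge0$, the vector $\Pi^\intercal D^{k+1}\nu$ is non-negative, so the induction hypothesis applies to this identity and yields $D^{k+1}\nu=0$ together with $F^{k+1}\omega=G^{k+1}\nu$. It then remains to show, using only the explicit forms of $F^{k+1},G^{k+1},D^{k+1}$, that these two identities force $\nu=0$ and $\omega=0$.

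When $q_k>0$ and $\alpha\ne0$ the matrices are those of \eqref{eq:firstFGD}: writing $\nu=(\nu_0;\nu')$, the identity $D^{k+1}\nu=|\alpha_\xi|\nu'=0$ gives $\nu'=0$ since $\alpha_\xi\ne0$, so $G^{k+1}\nu=\nu_0 e_{q_k;\xi}$; matching $F^{k+1}\omega$ against $\nu_0 e_{q_k;\xi}$ on the coordinate blocks $[\cl{}(\xi-1)]$ and $[(\xi+1)\cl{}q_k]$ forces $\omega=0$ (the latter block uses $|\alpha_\xi|\ne0$), after which the $\xi$-th coordinate gives $\nu_0=0$. When $q_k=0$ or $\alpha=0$ the matrices are those of \eqref{eq:secondFGD}: here $F^{k+1}=\Id$ and $G^{k+1}=0$ give $\omega=0$ at once, while $D^{k+1}\nu=0$ is stripped block by block---the $N^0$-rows kill the $N^0$-coordinates of $\nu$, and each $N^+$-row is a non-negative combination (coefficient $1$ on a retained $N^+$-coordinate and coefficients $-\beta_j>0$ for $j\in N^-$ on the new-pair coordinates $(i,j)$) of precisely the $N^+$- and new-pair coordinates, so those vanish too; hence $\nu=0$. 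This also covers $N^+=\emptyset$, where $R^{k+1}$ is merely a column-subselection of $R^k$ and the claim is inherited directly. This closes the induction.

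Finally, the equivalence with ``lineality directions of $K^k$ are not non-negative combinations of columns of $R^k$ and columns of $L^k$ are linearly independent'' is routine given that the span of the columns of $L^k$ is the lineality space of $K^k$ (Appendix~\ref{app:ddalgorithm}): taking $\nu=0$ recovers linear independence of the columns of $L^k$, and if some nonzero lineality direction $d$ equalled $R^k\nu$ with $\nu\ge0$, then writing $d=L^k\omega$ would violate the non-existence statement. \textbf{The main obstacle} is the final case analysis: after the clean reduction through \eqref{eq:LkRktoLk+1Rk+1} and $D^{k+1}\ge0$, one must exploit the sign of $\alpha_\xi$ and the sign pattern of $\beta$ on $N^+$ versus $N^-$ to peel off the coordinates of $\nu$ and $\omega$ one block at a time, all while keeping the permutation $P^{k+1}_\pi$ and the shifting dimensions $p_k,q_k$ straight across iterations.
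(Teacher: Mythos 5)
Your proof is correct and follows essentially the same route as the paper: establish the equivalence between the two formulations, then induct on the iteration $k$ using the factorization \eqref{eq:LkRktoLk+1Rk+1} to reduce to the vanishing of $\bigl(\begin{smallmatrix}F^{k+1}&G^{k+1}\\0&D^{k+1}\end{smallmatrix}\bigr)(\omega;-\nu)$, and then dispatch the two cases by reading off the sign structure of $F^{k+1}$, $G^{k+1}$, $D^{k+1}$ from \eqref{eq:firstFGD} and \eqref{eq:secondFGD}. The only (harmless) difference is that your equivalence argument leans on the columns of $L^k$ spanning the lineality space of $K^k$, whereas the paper avoids presupposing this by invoking \eqref{eq:pointrep} to write $-\gamma$ directly as $R^k\nu' + L^k\omega'$ with $\nu'\ge 0$.
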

  \long\def\lemRpointedWithLproof{
    First, we show that the first and second statement are equivalent. Let $K'$ be the cone generated by non-negative combinations of columns of $R^k$. If $K'$ contains a lineality direction of $K^k$, there exists a $\gamma \in K'$, $\gamma\ne 0$, such that $-\gamma\in K^k$. However, by \eqref{eq:pointrep}, there is a $\nu'\in \Re^{p_k}_+$ and $\omega'\in \Re^{q_k}$ such that $-\gamma = R^k \nu' +  L^k\omega'$ and $\bar{\nu}\in \Re^{p_k}_+$, $\bar{\nu}\ne 0$, such that $\gamma = R^k\bar{\nu}$. This implies that $R^k(\nu'+\bar{\nu}) + L^k\omega' = 0$, contradicting the first statement. Similarly, if columns of $L^k$ are not linearly independent, there exists an $\omega\ne 0$ such that $L^k\omega = 0$, again contradicting the first statement. Therefore, the second statement follows from the first. On the other hand, assume that the first statement is not true and there exists $(\omega,\nu)\ne 0$ such that $R^k\nu = L^k\omega$. If $\nu\ne 0$, $\gamma  = R^k\nu\in K'$ and $-\gamma = -L^k\omega \in K^k$, contradicting the second statement. If $\nu = 0$, $L^k\omega = 0$ for some $\omega\ne 0$, and the linear independence of columns of $L^k$ is contradicted.

    We prove the first statement via induction. The result holds at initialization, since the columns of $R^0$ and $L^0$ are linearly independent. Now, assume that the result holds at iteration $i$ but not after iteration $i+1$. Let $(\omega,\nu)\ne 0$ contradict the first statement so that $(L^{i+1}, R^{i+1})(-\omega;\nu) = 0$. By \eqref{eq:LkRktoLk+1Rk+1}, this would contradict the induction hypothesis unless:
    \begin{equation}\label{eq:nuomega}
        \begin{pmatrix}F^{k+1} & G^{k+1}\\ 0 & D^{k+1}\end{pmatrix} \begin{pmatrix}-\omega\\\nu\end{pmatrix} = 0.
    \end{equation}
    If $L^k$ is not orthogonal to $\A_{i+1,\cl{}}$, \eqref{eq:nuomega} and \eqref{eq:firstFGD} imply that $\nu_{2\cl{}} = 0$ and $\omega=0$. Moreover, $0 = -F^{K+1}_{\xi,\cl{}}\omega + G^{k+1}_{\xi,\cl{}}\nu = \nu_1$, contradicting $(\omega,\nu)\ne 0$. On the other hand, if $L^k$ is orthogonal to $\A_{i+1,\cl{}}$, it follows from \eqref{eq:secondFGD} and \eqref{eq:nuomega} that $\omega = 0$ and $\nu_{N^0} = 0$. Moreover, for each $i\in N^+$, we have $\nu_{i} - \sum_{j\in N^-}\beta_j\nu_{(i,j)} = 0$, where the left-hand-side is a sum of non-negative entries, each of which must, therefore, be zero. In other words, $\nu=0$, which contradicts $(\omega,\nu)\ne 0$.
  }
  \iftoggle{allproofsinpaper}{%
  \begin{proof}
   \lemRpointedWithLproof\qed
  \end{proof}}{}

  A difficulty with rational functions as barycentric coordinates is that the denominators become zero over certain faces of $P$ \cite[Theorem 3]{warren2003uniqueness}. Nevertheless, we show that all the functions are well-defined in the interior of $K^k$. 

  \begin{proposition}\label{prop:positivemu}
    Consider a face $F$ of $K^k$ and a point $(\bar{x}_0;\bar{x})\in \ri(F)$. Then, if $R^k_{j,\cl{}}\in F$, the rational expression for $\mu^k_j$ evaluated at $(\bar{x}_0;\bar{x})$ is strictly positive.
  \end{proposition}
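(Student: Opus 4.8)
The plan is to establish, by induction on $k$, the following slightly stronger assertion: for every face $F$ of $K^k$, every $(\bar{x}_0;\bar{x})\in\ri(F)$, and every index $j$ with $R^k_{\cl{},j}\in F$, the denominator of the rational expression $\mu^k_j$ does not vanish at $(\bar{x}_0;\bar{x})$ and $\mu^k_j(\bar{x}_0;\bar{x})>0$. The base case $k=0$ is immediate from the initialization. For the inductive step I would use the update formulas of Algorithm~\ref{alg:ddwithmult} together with Propositions~\ref{prop:expressmulin} and~\ref{prop:correctmutheta}; in particular, as noted in the proof of Proposition~\ref{prop:expressmulin}, one has the rational identities $\mu^{k+1}_1=\A_{k+1,\cl{}}(x_0;x)/\alpha_\xi$ in Case~1 of the algorithm, and $\mu^{k+1}_l=\mu^k_l$ for $l\in N^0$, $\mu^{k+1}_i=\mu^k_i\,\A_{k+1,\cl{}}(x_0;x)/N_{\text{tot}}$ for $i\in N^+$, and $\mu^{k+1}_{(i,j_0)}=\mu^k_i\mu^k_{j_0}/N_{\text{tot}}$ for $(i,j_0)\in N^+\times N^-$ in Case~2, where $N_{\text{tot}}=\sum_{i\in N^+}\beta_i\mu^k_i$. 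I would also use the easily checked invariant that the columns of $L^k$ lie in the lineality space of $K^k$, hence are annihilated by every row $\A_{i,\cl{}}$ with $i\le k$ and have vanishing first coordinate, and that, being lineality directions of $K^k$, they lie in every face of $K^k$.

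The heart of the argument is a reduction from faces of $K^{k+1}$ to faces of $K^k$. Fix $(\bar{x}_0;\bar{x})\in\ri(F)$ with $F$ a face of $K^{k+1}$ and a ray $R^{k+1}_{\cl{},j}\in F$, and let $F'$ be the minimal face of $K^k$ containing $(\bar{x}_0;\bar{x})$, so that $(\bar{x}_0;\bar{x})\in\ri(F')$ (standard for polyhedral cones). Claim: every generator of $R^{k+1}_{\cl{},j}$ among the columns of $R^k$ lies in $F'$. Since an inequality among $\{\A_{i,\cl{}}z\ge 0:i\le k\}\cup\{z_0\ge0\}$ is tight at $(\bar{x}_0;\bar{x})$ exactly when it is tight on $F'$, it suffices to show such an inequality, if tight at $(\bar{x}_0;\bar{x})$, is met with equality by those parent columns. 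When $R^{k+1}_{\cl{},j}=\alpha_\xi R^k_{\cl{},j'}-\beta_{j'}L^k_{\cl{},\xi}$ in Case~1, the invariant on $L^k$ shows its $\A_{i,\cl{}}$- and $z_0$-slacks equal $\alpha_\xi>0$ times those of $R^k_{\cl{},j'}$, so they vanish iff those of $R^k_{\cl{},j'}$ do; the remaining Case~1 ray $R^{k+1}_{\cl{},1}=L^k_{\cl{},\xi}$ lies in $F'$ automatically. In Case~2 the retained rays are columns of $R^k$, and a new ray $\beta_iR^k_{\cl{},j_0}-\beta_{j_0}R^k_{\cl{},i}$ is a \emph{nonnegative} combination of $R^k_{\cl{},i}$ and $R^k_{\cl{},j_0}$, both in $K^k$; since their slacks on $\{\A_{i,\cl{}}z\ge0:i\le k\}\cup\{z_0\ge0\}$ are nonnegative, the slack of the new ray is zero only if both parents' slacks are. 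Thus in all cases the parents lie in $F'$, and the induction hypothesis applied at $(\bar{x}_0;\bar{x})\in\ri(F')$ makes the corresponding $\mu^k$-expressions well-defined and strictly positive there.

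It then remains to combine this with the update formulas. If $R^{k+1}_{\cl{},j}$ has nonzero slack on the new constraint — it is $L^k_{\cl{},\xi}$ in Case~1, or an $N^+$-ray in Case~2 — then since $R^{k+1}_{\cl{},j}\in F$ the inequality $\A_{k+1,\cl{}}z\ge0$ is not tight on $F$, whence $\A_{k+1,\cl{}}(\bar{x}_0;\bar{x})>0$ because $(\bar{x}_0;\bar{x})\in\ri(F)$; together with the formula for $\mu^{k+1}_1$ (a degree-one polynomial) resp. for $\mu^{k+1}_i$ and the positivity of the parent $\mu^k$-value, this yields $\mu^{k+1}_j(\bar{x}_0;\bar{x})>0$ once $N_{\text{tot}}(\bar{x}_0;\bar{x})>0$ is known. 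For a new ray $\beta_iR^k_{\cl{},j_0}-\beta_{j_0}R^k_{\cl{},i}$ in $F$, the reduction gives $\mu^k_i,\mu^k_{j_0}>0$ at $(\bar{x}_0;\bar{x})$, so $\mu^{k+1}_{(i,j_0)}(\bar{x}_0;\bar{x})>0$ again provided $N_{\text{tot}}(\bar{x}_0;\bar{x})>0$; for an $N^0$-ray, $\mu^{k+1}_l=\mu^k_l$ shares its denominator and there is nothing new.

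The step I expect to be the main obstacle is controlling the denominator $N_{\text{tot}}$ exactly where it is needed. By Proposition~\ref{prop:correctmutheta} one has the identity $N_{\text{tot}}=\A_{k+1,\cl{}}(x_0;x)+\sum_{j_0\in N^-}|\beta_{j_0}|\mu^k_{j_0}$, so $N_{\text{tot}}\ge0$ on $K^{k+1}$, and $N_{\text{tot}}(\bar{x}_0;\bar{x})=0$ forces $\A_{k+1,\cl{}}(\bar{x}_0;\bar{x})=0$ and $\mu^k_{j_0}(\bar{x}_0;\bar{x})=0$ for all $j_0\in N^-$, hence also $\mu^k_i(\bar{x}_0;\bar{x})=0$ for all $i\in N^+$. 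In that degenerate case $\A_{k+1,\cl{}}z\ge0$ is tight on $F$, so no $N^+$-ray lies in $F$ (its slack on this constraint is $\beta_i\ne0$), and if a new ray $\beta_iR^k_{\cl{},j_0}-\beta_{j_0}R^k_{\cl{},i}$ lay in $F$ then, by the reduction, $R^k_{\cl{},i}\in F'$, contradicting $\mu^k_i(\bar{x}_0;\bar{x})=0$ and the induction hypothesis; so the only rays of $R^{k+1}$ in $F$ are $N^0$-rays, for which no division by $N_{\text{tot}}$ occurs. Conversely, whenever a ray requiring such a division lies in $F$, the reduction produces an $i\in N^+$ with $R^k_{\cl{},i}\in F'$, so that $N_{\text{tot}}(\bar{x}_0;\bar{x})\ge\beta_i\mu^k_i(\bar{x}_0;\bar{x})>0$ by the induction hypothesis, the remaining summands being $\ge0$ since (again by the induction hypothesis together with Proposition~\ref{prop:correctmutheta}) every $\mu^k$-value is defined and nonnegative at $(\bar{x}_0;\bar{x})$. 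This closes the induction; the remaining verifications are routine manipulations with the formulas of Algorithm~\ref{alg:ddwithmult}.
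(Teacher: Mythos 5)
Your proposal is correct and follows essentially the same line of argument as the paper's proof: induction on $k$, with the crucial reduction step of locating the minimal face $F'$ of $K^k$ containing $(\bar{x}_0;\bar{x})$ in its relative interior and showing that every parent column (in $R^k$) of a column of $R^{k+1}$ lying in $F$ must lie in $F'$, then invoking the induction hypothesis together with the explicit update formulas for $\mu^{k+1}$ in the two cases of Algorithm~\ref{alg:ddwithmult}. The only difference is cosmetic: you establish the parent-in-$F'$ claim by comparing slacks of the defining inequalities, whereas the paper constructs a single supporting hyperplane $a^\intercal z = 0$ for $F'$ and observes $a^\intercal L^i_{:,\xi}=0$; both routes deliver the same containment. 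You also handle the control of the denominator $N_{\text{tot}}$ — showing that whenever $N_{\text{tot}}(\bar{x}_0;\bar{x})=0$ no ray with a $N_{\text{tot}}$-divided coordinate can lie in $F$, and conversely that a ray needing that division yields some $i\in N^+$ with $R^k_{:,i}\in F'$ so $N_{\text{tot}}(\bar{x}_0;\bar{x})>0$ — somewhat more explicitly than the paper, which compresses this into its Case~2.iii observation that either $J\cap N^+=\emptyset$ (so the problematic rays are absent from $F$) or $N_{\text{tot}}$ is positive at $(\bar{x}_0;\bar{x})$ by the inductive hypothesis; the substance is identical.
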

  \begin{proof}
    At the start of the algorithm, we have a single ray $R^0_{:,1}=e_0$. This ray belongs to $F=K^0 = \{x\mid x_0\ge 0\}$. If $(\bar{x}_0;\bar{x})\in \ri(F)$, $\mu^0_1 = \bar{x}_0 > 0$. Now, assume that the property holds after iteration $i$ of the algorithm. Consider a face $F$ of $K^{i+1}$ and a point $(\bar{x}_0;\bar{x})\in \ri(F)$. Since $(\bar{x}_0;\bar{x})\in K^i$, let $F'$ be the face of $K^i$ such that $(\bar{x}_0;\bar{x})\in \ri(F')$. By \eqref{eq:pointrep} and the induction hypothesis, let $(\bar{x}_0;\bar{x}) = R^i\mu^i + L^i\theta^i$ such that $\mu^i_j > 0$ for each column $R^i_{\cl{},j} \in F'$ and let $J=\{j\mid R^i_{\cl{},j} \in F'\}$. Construct a supporting hyperplane $a^\intercal (x_0;x) = 0$, exploiting that $F'$ is a face of $K^i$, such that $a^\intercal R^i_{\cl{},j} = 0$ but $a^\intercal (x_0;x) > 0$ for $(x_0;x) \in K^i\backslash F'$. We will show that the rays that belong to $F$ have a positive multiplier for $(x_0;x)$. To this end, we will consider two cases depending on whether $L^i$ is orthogonal to $A_{i+1,\cl{}}$ or not.
    
    \begin{casesp}
      \item Assume that $L^i$ is nonempty and $\A_{i+1,\cl{}}L^i \ne 0$. Then, $R^{i+1}$ and $L^{i+1}$ are defined at Steps~\ref{algstep:defineR1} and \ref{algstep:defineL1} of Algorithm~\ref{alg:ddwithmult} respectively.  Since $F'$ shares the lineality space of $K^i$, it follows that $a^\intercal L^i_{\cl{},\xi} = 0$. The rays in $R^{i+1}$ consist of $L^i_{\cl{},\xi}$ and those obtained by adding multiples of $L^i_{\cl{},\xi}$ to rays in $R^i$. We will analyze these two types of rays separately. 
      \begin{casesp} 
        \item A ray $r\in F$ is derived from $R^i_{\cl{},j}$ by adding a multiple of $L^i_{\cl{},\xi}$. Then, $R^i_{\cl{},j} \in F'$ since $r\in F'$ and $a^\intercal L^i_{\cl{},\xi} = 0$. The positivity of the multiplier associated with $r$ follows from the induction hypothesis since the multiplier of $R^i_{\cl{},j}$ was positive and Step~\ref{algstep:definemu1} scales it by a positive constant $1/|\alpha_\xi|$. 
        \item Now, consider $L^i_{\cl{},\xi}$. If $L^i_{\cl{},\xi}\in F$ we have, after negating $L^i_{\cl{},\xi}$ if necessary, that $\A_{i+1,\cl{}}L^i_{\cl{},\xi} > 0$. Since $L^i_{\cl{},\xi} \in F$, $(x_0;x) - \epsilon L^i_{\cl{},\xi}\in F$ for sufficiently small $\epsilon > 0$, which shows that $\A_{i+1,\cl{}}(\bar{x}_0,\bar{x}) > 0$. However, by \eqref{eq:pointrep}, this shows that the multiplier associated with $L^i_{\cl{},\xi}$ must be positive since all other columns of $R^{i+1}$ and those of $L^{i+1}$ are orthogonal to $\A_{i+1,\cl{}}$.
      \end{casesp}
    
     \item Assume that either $L^i$ is empty or $\A_{i+1,\cl{}}L^i = 0$. Then, by the induction hypothesis, $\mu^i_j > 0$ for $j\in J$ and, by \eqref{eq:pointrep}, $\mu^i_j = 0$ for $j\in \{\cl{}p_k\}\backslash J$. Observe that all rays derived at Step~\ref{algstep:defineR2} are positive combinations of rays in $R^i$. We argue that it suffices to consider rays with indices in $J$. Instead, consider a ray $r$ obtained by taking a positive combination of a ray $R^i_{\cl{},j}$ for some $j\not\in J$. Then, $a^\intercal r > 0$, which implies $r\not\in F'$ and, therefore, $r\not\in F$. So, we limit attention to $J$ and consider the three types of rays created at Step~\ref{algstep:defineR2}. 
     \begin{casesp}
     \item
    Consider a ray with index $j$ in $J\cap N^0$ that belongs to $F$. This ray remains unaltered and inherits its positive multiplier $\mu^i_j$ from the previous iteration. 
     \item Consider a ray $r$ that is a conic combination of $R^i_{\cl{},j}$ and $R^i_{\cl{},j'}$ where $j\in J\cap N^+$ and $j'\in J\cap N^-$. Then, by the induction hypothesis $\mu^i_j > 0$ and $\mu^i_{j'} > 0$ at $(\bar{x}_0;\bar{x})$. Moreover, since $J\cap N^+\ne \emptyset$, we have $N_{\text{tot}} > 0$ at $(\bar{x}_0,\bar{x})$. It follows that the multiplier for $r$, given by $\frac{\mu^i_{j}\mu^i_{j'}}{N_{\text{tot}}}$, is positive.
     \item 
    If $J\cap N^+ = \emptyset$, none of the other rays belong to $F$. On the other hand, if $J\cap N^+$ is non-empty, by the inductive hypothesis and $\beta_j > 0$ for $j\in N^+$, we have that $N_{\text{tot}}$ is positive over $F$. Now, if $\A_{i+1,\cl{}}(\bar{x}_0;\bar{x}) = 0$, we have $\A_{i+1,\cl{}}(x_0;x) = 0$ for all $(x_0;x)\in F$, in which case the ray with index in $J\cap N^+$ does not belong to $F$ and does not need to be considered. If $\A_{i+1,\cl{}}(\bar{x}_0;\bar{x}) > 0$, for any $j\in J\cap N^+$, by the induction hypothesis, $\mu^i_j > 0$. Then, the new multiplier associated with $j$ is  
    \begin{equation*}
      \frac{\mu^i_j}{N_{\text{tot}}}\left(N_{\text{tot}} + \sum_{j'\in N^-}\mu^i_{j'}\beta_{j'}\right) =\frac{\mu^i_j}{N_{\text{tot}}} \A_{i+1,\cl{}}(\bar{x}_0;\bar{x}) > 0.\tag*{\qed}
    \end{equation*}
  \end{casesp}
  \end{casesp}
  \iftoggle{createarxiv}{\renewcommand{\qedsymbol}{}}{}
  \end{proof}

Proposition~\ref{prop:positivemu} shows that the barycentric coordinates derived using Algorithm~\ref{alg:ddwithmult} are of a special type. For example, to express $(0.5,0.5) \in [0,1]^2$, we may choose the barycentric coordinates for $(0,1)$ and $(1,0)$ to be zero even though these vertices belong to $[0,1]^2$. However, Algorithm~\ref{alg:ddwithmult} does not produce these barycentric coordinates, and, instead, associates a positive multiplier with all the vertices of the face. This feature is due to the solution we chose for the transportation problem described in Appendix~\ref{app:ddalgorithm}. Even though Proposition~\ref{prop:positivemu} shows that all the associated rational expressions are strictly positive over $\ri(P^k)$, there are faces of $P^k$, where the denominators for some of these expressions become zero. This happens when, at iteration $k$, $N^{\text{tot}}(x_0;x)$ is zero at some $(x_0;x)$, and if $(x_0;x)\in K^{k+1}$, this point must belong to a face of $K^k$ generated by rays with indices in $N^0$.
Since the multipliers $\mu^k_j$, for $j\in N^0$, are not divided by $N_\text{tot}$, the division does not affect the non-zero multipliers. The affected multipliers all converge to zero. Since we eventually linearize the nonlinear terms producing a closed set, we will not consider this issue and will derive inequalities for the relative interior of the face containing $P$ instead.  

\begin{example}\label{ex:megaexample}
  Consider $P$ given as follows:
  \begin{subequations}\label{eq:Pineq}
  \begin{alignat}{2}
    x_1,x_2,x_3\ge 0\label{eq:P1-3}\\
    x_{1}+4 x_{2}+x_{3}\le 7\label{eq:P4}\\
    2x_{1}+x_{2}+x_{3} \le 5\label{eq:P5}\\
    x_{1}+x_{2}+x_{3} \le 4.\label{eq:P6}
  \end{alignat}
\end{subequations}
  \begin{figure}
    \begin{center}
      \newcommand{\pscale}{0.56}
      \let\pscaleQ\pscale
      \let\pscaleR\pscale
      \let\pscaleS\pscale
      \begin{tabular}{cc}
      \begin{subfigure}{0.42\textwidth}
      \begin{tikzpicture}%
      [x={(-0.387138cm, 0.347063cm)},
      y={(0.922022cm, 0.145724cm)},
      z={(0.000000cm, 0.926451cm)},
      scale=\pscaleR,
      back/.style={dotted, thick},
      edge/.style={color=blue!95!black, thick},
      intersectedge/.style={color=red!95!black, thick},
      facet/.style={fill=blue!95!black,fill opacity=0.300000},
      intersect/.style={fill=red!95!black,fill opacity=0.500000},
      plane/.style={fill=yellow!95!black,fill opacity=0.300000},
      vertex/.style={inner sep=1pt,circle,draw=green!25!black,fill=green!75!black,thick}]
    %
    %
\fill[plane] (0.50000, -1.00000, 5.00000) -- (-1.00000, 2.00000, 5.00000) -- (1.50000, 2.00000, 0.00000) -- (3.00000, -1.00000, 0.00000) -- cycle {};
    \node [right=0.1] at (1,2,2) {\eqref{eq:P5}};
\fill[intersect] (1.85714, 1.28571, 0.00000) -- (0.00000, 0.66667, 4.33333) -- (0.00000, 0.00000, 5.00000) -- (2.50000, 0.00000, 0.00000) -- cycle {};
\draw[intersectedge] (0.00000, 0.00000, 5.00000) -- (0.00000, 0.66667, 4.33333);
\draw[intersectedge] (0.00000, 0.00000, 5.00000) -- (2.50000, 0.00000, 0.00000);
\draw[intersectedge] (0.00000, 0.66667, 4.33333) -- (1.85714, 1.28571, 0.00000);
\draw[intersectedge] (2.50000, 0.00000, 0.00000) -- (1.85714, 1.28571, 0.00000);
\node [right] at (0,0,7) {$\dot{x} = (0,0,7)$};
\node (A) [xshift=15, right] at (0,0,5) {$x' = (0,0,5)$};
\draw [->] (0,0,5) -- (A);
\node (B) [xshift=8,yshift=-3,right] at (0,2/3,13/3) {$\starred{x} = (0,2/3,13/3)$};
\draw [->] (0,2/3,13/3) -- (B);

    \draw[color=black,thick,->] (0,0,0) -- (1,0,0) node[anchor=north east]{$x_1$};
    \draw[color=black,thick,->] (0,0,0) -- (0,1,0) node[anchor=north west]{$x_2$};
    \draw[color=black,thick,->] (0,0,0) -- (0,0,2) node[anchor=south]{$x_3$};
    \coordinate (0.00000, 0.00000, 0.00000) at (0.00000, 0.00000, 0.00000);
    \coordinate (0.00000, 0.00000, 7.00000) at (0.00000, 0.00000, 7.00000);
    \coordinate (0.00000, 1.75000, 0.00000) at (0.00000, 1.75000, 0.00000);
    \coordinate (7.00000, 0.00000, 0.00000) at (7.00000, 0.00000, 0.00000);
    \draw[edge,back] (0.00000, 0.00000, 0.00000) -- (0.00000, 0.00000, 7.00000);
    \fill[facet] (7.00000, 0.00000, 0.00000) -- (0.00000, 0.00000, 0.00000) -- (0.00000, 1.75000, 0.00000) -- cycle {};
    \fill[facet] (7.00000, 0.00000, 0.00000) -- (0.00000, 0.00000, 7.00000) -- (0.00000, 1.75000, 0.00000) -- cycle {};
    \draw[edge] (0.00000, 0.00000, 0.00000) -- (0.00000, 1.75000, 0.00000);
    \draw[edge] (0.00000, 0.00000, 0.00000) -- (7.00000, 0.00000, 0.00000);
    \draw[edge] (0.00000, 0.00000, 7.00000) -- (0.00000, 1.75000, 0.00000);
    \draw[edge] (0.00000, 0.00000, 7.00000) -- (7.00000, 0.00000, 0.00000);
    \draw[edge] (0.00000, 1.75000, 0.00000) -- (7.00000, 0.00000, 0.00000);
    \node[vertex] at (0.00000, 0.00000, 0.00000)     {};
    \node[vertex] at (0.00000, 0.00000, 7.00000)     {};
    \node[vertex] at (0.00000, 1.75000, 0.00000)     {};
    \node[vertex] at (7.00000, 0.00000, 0.00000)     {};
 
      \end{tikzpicture}
      \caption{Polytope bounded by \eqref{eq:P1-3}, \eqref{eq:P4}}\label{fig:Polytopea}
    \end{subfigure}&
      \begin{subfigure}{0.51\textwidth}
      \begin{tikzpicture}%
        [x={(-0.387138cm, 0.347063cm)},
        y={(0.922022cm, 0.145724cm)},
        z={(0.000000cm, 0.926451cm)},
        scale=\pscaleR,
        back/.style={dotted, thick},
        edge/.style={color=blue!95!black, thick},
        intersectedge/.style={color=red!95!black, thick},
        facet/.style={fill=blue!95!black,fill opacity=0.300000},
        intersect/.style={fill=red!95!black,fill opacity=0.500000},
        plane/.style={fill=yellow!95!black,fill opacity=0.300000},
        vertex/.style={inner sep=1pt,circle,draw=green!25!black,fill=green!75!black,thick}]
      %
      %
\draw[color=black,thick,->] (0,0,0) -- (1,0,0) node[anchor=north east]{$x_1$};
\draw[color=black,thick,->] (0,0,0) -- (0,1,0) node[anchor=north west]{$x_2$};
\draw[color=black,thick,->] (0,0,0) -- (0,0,2) node[anchor=south]{$x_3$};
\coordinate (-1.00000, 4.00000, 1.00000) at (-1.00000, 4.00000, 1.00000);
\coordinate (2.00000, 1.00000, 1.00000) at (2.00000, 1.00000, 1.00000);
\coordinate (2.00000, -3.00000, 5.00000) at (2.00000, -3.00000, 5.00000);
\coordinate (-1.00000, 0.00000, 5.00000) at (-1.00000, 0.00000, 5.00000);
\fill[plane] (-1.00000, 0.00000, 5.00000) -- (-1.00000, 4.00000, 1.00000) -- (2.00000, 1.00000, 1.00000) -- (2.00000, -3.00000, 5.00000) -- cycle {};

\node [right=0.1] at (-1,2.5,2.5) {\eqref{eq:P6}};
\node [right=0.1] at (0,0,5) {$(0,0,5)$};
\node [left=0.1] at (5/2,0,0) {$(5/2,0,0)$};
\node [below=0.1] at (0,0,0) {$(0,0,0)$};
\node [right=0.1] at (0,7/4,0) {$(0,7/4,0)$};
\node (A) [xshift=20, yshift=1, right] at (13/7,9/7,0) {$(13/7,9/7,0)$};
\draw [->] (13/7,9/7,0) -- (A);
\node (B) [xshift=-5, left] at (1,0,3) {$\check{x} = (1,0,3)$};
\node (C) [xshift=-17,yshift=-7,left] at (1,9/13,30/13) {$\phantom{(1,9/13,30/13)}\mathllap{\mathring{x} = (1,9/13,30/13)}$};
\draw [->] (1,9/13,30/13) -- (C);
\node (D) [xshift=-10,yshift=2,left] at (0,0,4) {$\hat{x} = (0,0,4)$};
\draw [->] (0,0,4) -- (D);
\node (E) [xshift=10,yshift=2,right] at (0,7/13,45/13) {$\bar{x} = (0,7/13,45/13)$};
\draw [->] (0,7/13,45/13) -- (E);
\draw [->] (1,0,3) -- (B);
\coordinate (0.00000, 0.00000, 4.00000) at (0.00000, 0.00000, 4.00000);
\coordinate (0.00000, 1.00000, 3.00000) at (0.00000, 1.00000, 3.00000);
\coordinate (1.00000, 0.00000, 3.00000) at (1.00000, 0.00000, 3.00000);
\coordinate (1.00000, 1.00000, 2.00000) at (1.00000, 1.00000, 2.00000);
\fill[intersect] (1.00000, 1.00000, 2.00000) -- (0.00000, 1.00000, 3.00000) -- (0.00000, 0.00000, 4.00000) -- (1.00000, 0.00000, 3.00000) -- cycle {};
\draw[intersectedge] (0.00000, 0.00000, 4.00000) -- (0.00000, 1.00000, 3.00000);
\draw[intersectedge] (0.00000, 0.00000, 4.00000) -- (1.00000, 0.00000, 3.00000);
\draw[intersectedge] (0.00000, 1.00000, 3.00000) -- (1.00000, 1.00000, 2.00000);
\draw[intersectedge] (1.00000, 0.00000, 3.00000) -- (1.00000, 1.00000, 2.00000);
      \coordinate (0.00000, 0.00000, 0.00000) at (0.00000, 0.00000, 0.00000);
      \coordinate (0.00000, 0.00000, 5.00000) at (0.00000, 0.00000, 5.00000);
      \coordinate (0.00000, 0.66667, 4.33333) at (0.00000, 0.66667, 4.33333);
      \coordinate (0.00000, 1.75000, 0.00000) at (0.00000, 1.75000, 0.00000);
      \coordinate (2.50000, 0.00000, 0.00000) at (2.50000, 0.00000, 0.00000);
      \coordinate (1.85714, 1.28571, 0.00000) at (1.85714, 1.28571, 0.00000);
      \draw[edge,back] (0.00000, 0.00000, 0.00000) -- (0.00000, 0.00000, 5.00000);
      \fill[facet] (1.85714, 1.28571, 0.00000) -- (0.00000, 0.66667, 4.33333) -- (0.00000, 0.00000, 5.00000) -- (2.50000, 0.00000, 0.00000) -- cycle {};
      \fill[facet] (1.85714, 1.28571, 0.00000) -- (0.00000, 1.75000, 0.00000) -- (0.00000, 0.00000, 0.00000) -- (2.50000, 0.00000, 0.00000) -- cycle {};
      \fill[facet] (1.85714, 1.28571, 0.00000) -- (0.00000, 0.66667, 4.33333) -- (0.00000, 1.75000, 0.00000) -- cycle {};
      \draw[edge] (0.00000, 0.00000, 0.00000) -- (0.00000, 1.75000, 0.00000);
      \draw[edge] (0.00000, 0.00000, 0.00000) -- (2.50000, 0.00000, 0.00000);
      \draw[edge] (0.00000, 0.00000, 5.00000) -- (0.00000, 0.66667, 4.33333);
      \draw[edge] (0.00000, 0.00000, 5.00000) -- (2.50000, 0.00000, 0.00000);
      \draw[edge] (0.00000, 0.66667, 4.33333) -- (0.00000, 1.75000, 0.00000);
      \draw[edge] (0.00000, 0.66667, 4.33333) -- (1.85714, 1.28571, 0.00000);
      \draw[edge] (0.00000, 1.75000, 0.00000) -- (1.85714, 1.28571, 0.00000);
      \draw[edge] (2.50000, 0.00000, 0.00000) -- (1.85714, 1.28571, 0.00000);
      \node[vertex] at (0.00000, 0.00000, 0.00000)     {};
      \node[vertex] at (0.00000, 0.00000, 5.00000)     {};
      \node[vertex] at (0.00000, 0.66667, 4.33333)     {};
      \node[vertex] at (0.00000, 1.75000, 0.00000)     {};
      \node[vertex] at (2.50000, 0.00000, 0.00000)     {};
      \node[vertex] at (1.85714, 1.28571, 0.00000)     {};
      \end{tikzpicture}
      \caption{Polytope bounded by \eqref{eq:P1-3}-\eqref{eq:P5}}\label{fig:Polytopeb}
    \end{subfigure}\\[3ex]
    \multicolumn{2}{c}{\begin{subfigure}{0.6\textwidth}
      \begin{tikzpicture}%
        [x={(-0.387138cm, 0.347063cm)},
        y={(0.922022cm, 0.145724cm)},
        z={(0.000000cm, 0.926451cm)},
        scale=\pscaleS,
        back/.style={dotted, thick},
        edge/.style={color=blue!95!black, thick},
        facet/.style={fill=blue!95!black,fill opacity=0.300000},
        vertex/.style={inner sep=1pt,circle,
        draw=green!25!black,fill=green!75!black,thick},
        myvertex/.style={inner sep=0.1pt,circle,
        fill=red!75!black}
        ]
      %
      %
      \draw[color=black,thick,->] (0,0,0) -- (1,0,0) node[anchor=north east]{$x_1$};
      \draw[color=black,thick,->] (0,0,0) -- (0,1,0) node[anchor=north west]{$x_2$};
      \draw[color=black,thick,->] (0,0,0) -- (0,0,2) node[anchor=south]{$x_3$};
      \node [above right] at (0,0,4) {$(0,0,4)$};
      \node [below] at (0,0,0) {$(0,0,0)$};
      \node (B) [xshift=20,yshift=5,right] at (13/7,9/7,0) {$\bigl(13/7,9/7,0\bigr)$};
      \node (A) [right=2] at (1,1,2) {$\bigl(1,1,2\bigr)$};
      \node [left] at (5/2,0,0) {$(5/2,0,0)$};
      \node [left] at (1,0,3) {$(1,0,3)$};
      \node [right] at (0,1,3) {$(0,1,3)$};
      \node [right] at (0,7/4,0) {$(0,7/4,0)$};
      \draw [->] (1,1,2) -- (A);
      \draw [->] (13/7,9/7,0) -- (B);
      \node (C) [yshift=5, xshift=20, right] at (0, 7/13, 45/13) {$\bar{x} = (0, 7/13, 45/13)$};
      \draw[->] (0, 7/13, 45/13) -- (C);
      \node (D) [yshift=12,xshift=-12,left] at (0, 8/15, 52/15)  {$\tilde{x} = (0, 8/15, 52/15)$};
      \draw[->] (0, 8/15, 52/15) -- (D);
      \coordinate (0.00000, 0.00000, 0.00000) at (0.00000, 0.00000, 0.00000);
      \coordinate (0.00000, 0.00000, 4.00000) at (0.00000, 0.00000, 4.00000);
      \coordinate (0.00000, 1.00000, 3.00000) at (0.00000, 1.00000, 3.00000);
      \coordinate (0.00000, 1.75000, 0.00000) at (0.00000, 1.75000, 0.00000);
      \coordinate (1.00000, 0.00000, 3.00000) at (1.00000, 0.00000, 3.00000);
      \coordinate (1.00000, 1.00000, 2.00000) at (1.00000, 1.00000, 2.00000);
      \coordinate (2.50000, 0.00000, 0.00000) at (2.50000, 0.00000, 0.00000);
      \coordinate (1.85714, 1.28571, 0.00000) at (1.85714, 1.28571, 0.00000);
      \draw[edge,back] (0.00000, 0.00000, 0.00000) -- (0.00000, 0.00000, 4.00000);
      \fill[facet] (1.85714, 1.28571, 0.00000) -- (0.00000, 1.75000, 0.00000) -- (0.00000, 1.00000, 3.00000) -- (1.00000, 1.00000, 2.00000) -- cycle {};
      \fill[facet] (1.85714, 1.28571, 0.00000) -- (1.00000, 1.00000, 2.00000) -- (1.00000, 0.00000, 3.00000) -- (2.50000, 0.00000, 0.00000) -- cycle {};
      \fill[facet] (1.85714, 1.28571, 0.00000) -- (0.00000, 1.75000, 0.00000) -- (0.00000, 0.00000, 0.00000) -- (2.50000, 0.00000, 0.00000) -- cycle {};
      \fill[facet] (1.00000, 1.00000, 2.00000) -- (0.00000, 1.00000, 3.00000) -- (0.00000, 0.00000, 4.00000) -- (1.00000, 0.00000, 3.00000) -- cycle {};
      \draw[edge] (0.00000, 0.00000, 0.00000) -- (0.00000, 1.75000, 0.00000);
      \draw[edge] (0.00000, 0.00000, 0.00000) -- (2.50000, 0.00000, 0.00000);
      \draw[edge] (0.00000, 0.00000, 4.00000) -- (0.00000, 1.00000, 3.00000);
      \draw[edge] (0.00000, 0.00000, 4.00000) -- (1.00000, 0.00000, 3.00000);
      \draw[edge] (0.00000, 1.00000, 3.00000) -- (0.00000, 1.75000, 0.00000);
      \draw[edge] (0.00000, 1.00000, 3.00000) -- (1.00000, 1.00000, 2.00000);
      \draw[edge] (0.00000, 1.75000, 0.00000) -- (1.85714, 1.28571, 0.00000);
      \draw[edge] (1.00000, 0.00000, 3.00000) -- (1.00000, 1.00000, 2.00000);
      \draw[edge] (1.00000, 0.00000, 3.00000) -- (2.50000, 0.00000, 0.00000);
      \draw[edge] (1.00000, 1.00000, 2.00000) -- (1.85714, 1.28571, 0.00000);
      \draw[edge] (2.50000, 0.00000, 0.00000) -- (1.85714, 1.28571, 0.00000);
      \node[vertex] at (0.00000, 0.00000, 0.00000)     {};
      \node[vertex] at (0.00000, 0.00000, 4.00000)     {};
      \node[vertex] at (0.00000, 1.00000, 3.00000)     {};
      \node[vertex] at (0.00000, 1.75000, 0.00000)     {};
      \node[vertex] at (1.00000, 0.00000, 3.00000)     {};
      \node[vertex] at (1.00000, 1.00000, 2.00000)     {};
      \node[vertex] at (2.50000, 0.00000, 0.00000)     {};
      \node[vertex] at (1.85714, 1.28571, 0.00000)     {};
      \node[myvertex] at (0, 7/13, 45/13)     {};
      \node[myvertex] at (0, 9/13, 30/13)     {};
      \node[myvertex] at (0, 8/15, 52/15)     {};
      \node[myvertex] at (0, 2/5, 13/5)     {};
      \end{tikzpicture}
      \caption{Polytope bounded by \eqref{eq:P1-3}-\eqref{eq:P6}}\label{fig:Polytopec}
    \end{subfigure}
    }
  \end{tabular}
      
  \end{center}
  \caption{Stages of relaxation construction. Figure~\ref{fig:Polytopea} introduces \eqref{eq:P5} on the simplex bounded by \eqref{eq:P1-3} and \eqref{eq:P4}. The points
  obtained as convex combinations from $\dot{x}$ are shown. Figure~\ref{fig:Polytopeb} shows the effect of introducing \eqref{eq:P6} and the points $\hat{x}$, $\check{x}$, $\bar{x}$, and $\mathring{x}$ introduced to capture the multiplier of $x'$.}
  \label{fig:polytopeP}
  \end{figure}
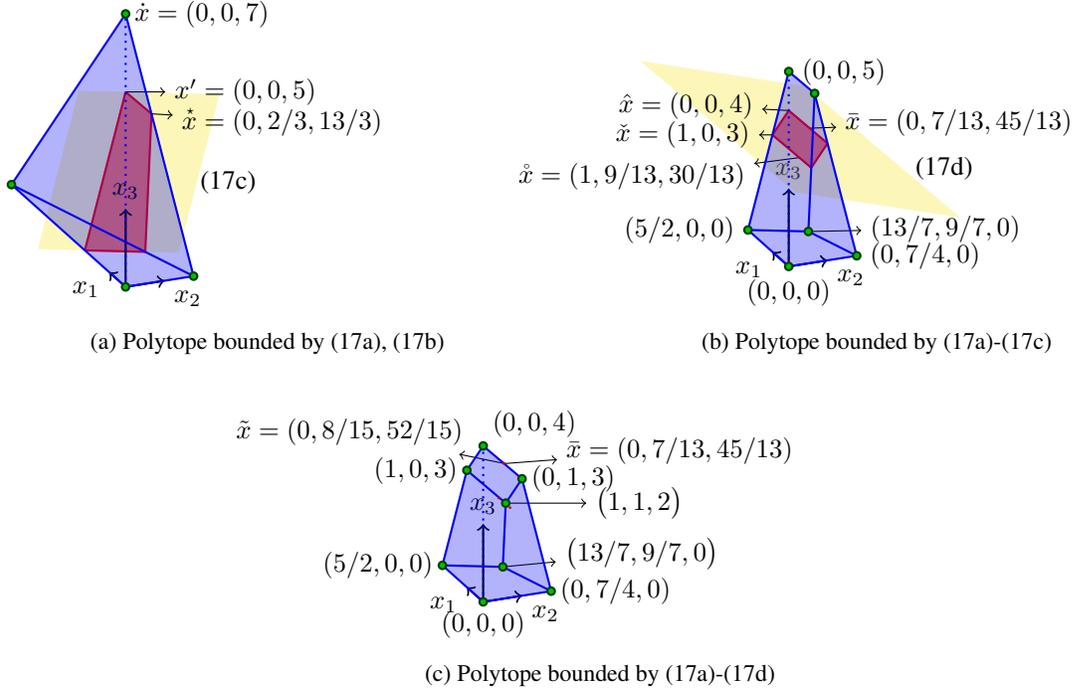%
  For an illustration of the polytope, see Figure~\ref{fig:Polytopec}, where the inequalities are introduced in the order they are specified above. Figure~\ref{fig:Polytopea} shows the polytope obtained with \eqref{eq:P1-3} and \eqref{eq:P4}, while Figure~\ref{fig:Polytopeb} illustrates when \eqref{eq:P5} has also been added. $P$ is a simple polytope and, therefore, we can obtain expressions for $\mu$ via explicit formulae \cite{warren2007barycentric}. In this case, we write the following dehomogenized matrix depicting the vertices of $P$:
  \begingroup
  \renewcommand*{\arraystretch}{1.5}
  \begin{equation*}
  R = \begin{pmatrix}
    1 & 1 & 1 & 1 & 1 & 1 & 1 & 1 
    \\
     0 & \frac{5}{2} & 0 & 1 & 0 & \frac{13}{7} & 0 & 1 
    \\
     0 & 0 & 0 & 0 & \frac{7}{4} & \frac{9}{7} & 1 & 1 
    \\
     0 & 0 & 4 & 3 & 0 & 0 & 3 & 2 
  \end{pmatrix}.
\end{equation*}
\endgroup
  We process the inequalities in the order $[1\cl{}6]$. For ease of reference, instead of indexing $\mu$ in the order listed in $R$, we index $\mu$ using the points themselves. Thus, the barycentric coordinate for $\hat{x} = (0,0,4)$ is denoted as $\mu_{\hat{x}}$.
  First, we show that the barycentric coordinates from Algorithm~\ref{alg:ddwithmult} differ from those calculated using \cite{warren2007barycentric} when redundant rays are present in the former. Since the absolute value of the determinant of the normal vectors at $(0,0,4)$ is $1$, the calculation of the barycentric coordinate using \cite{warren2007barycentric} yields, 
  \begin{equation}\label{eq:fullmu3example}
    \mu_{\hat{x}} = \dfrac{x_{3} \left(7-x_{1}-4 x_{2}-x_{3}\right) \left(5-2 x_{1}-x_{2}-x_{3}\right)}{d(x)},
  \end{equation}
  where the numerator consists of inequalities that are not tight at $(0,0,4)$ multiplied with the determinant of the normal vectors, which is $1$. Here, $d(x)$ is the sum of numerators of $\mu_i$, $i\in\{1\cl{}8\}$, and is given by:
  \begin{equation}\label{eq:dxformula}
    \begin{split}
    d(x) = 
    5 x_{1}^{2}+12 x_{1} x_{2}+9 x_{3} x_{1}+7 x_{2}^{2}+11 x_{3} x_{2}\\
    {}+4 x_{3}^{2}-55 x_{1}-63 x_{2}-48 x_{3}+140.
    \end{split}
  \end{equation}
  Now, we show that $\mu_{\hat{x}}$ differs from the coordinate computed by Algorithm~\ref{alg:ddwithmult}. The algorithm produces the following redundant points that are not in $\extpt(P)$: 
  \begin{equation*}
    (0,7/13,45/13), (0,9/13,30/13), (0,8/15,52/15), \text{ and } (1,2/5,13/5).
  \end{equation*}
  In particular, $\bar{x} = (0,7/13,45/13) = 6/13(0,0,4)+7/13(0,1,3)$ and $\tilde{x} = (0,8/15,52/15) = 7/15(0,0,4)+8/15(0,1,3)$ (see Figure~\ref{fig:Polytopec}). (The two points $\bar{x}$ and $\tilde{x}$ are too close to be distinguishable in Figure~\ref{fig:Polytopec}). 
  It turns out that $\mu_{\hat{x}}= \mu^3_{\hat{x}} + 6/13\mu^3_{\bar{x}} + 7/15\mu^3_{\tilde{x}}$, which highlights a source of difference between \eqref{eq:fullmu3example} and the formulae generated by Algorithm~\ref{alg:ddwithmult}. There are additional differences. First, Algorithm~\ref{alg:ddwithmult} works for non-simple polytopes and polyhedra. Second, Algorithm~\ref{alg:ddwithmult} is iterative in nature, which makes it well-suited for constructing a relaxation hierarchy. Third, the denominators of the resulting expressions only approach zero as the coordinates converge to zero. Finally, as detailed in Theorem~\ref{thm:expressmulin}, the barycentric coordinates are affinely related across iterations. We illustrate this last point next. Specifically, $\mu^3_{\hat{x}}$ computed by Algorithm~\ref{alg:ddwithmult} is:
  \begin{equation}\label{eq:mu3definition}
    \mu^3_{\hat{x}} = \dfrac{5 x_{3} \left(x_{1}+4 x_{2}+x_{3}-7\right)^{2} \left(-5+x_{3}+x_{2}+2 x_{1}\right)}{d(x)\left(-35+5 x_{3}+7 x_{2}+5 x_{1}\right)}.
  \end{equation}
  We backtrack to the step where we have not yet used the inequality~\eqref{eq:P6} (see Figure~\ref{fig:Polytopeb}). Let us consider the derivation of $\mu^3_{\hat{x}}$. The polytope has vertices $(0,0,0)$ and $(0,0,5)$ such that $(0,0,4)$ lies on the edge connecting them. There are four vertices $(5/2,0,0)$, $(0,0,0)$, $(13/7,9/7,0)$, and $(0,7/4,0)$ that satisfy \eqref{eq:P6}. 
  We use these to write:
  \begin{equation*}
    N_{\text{tot}} = (\beta_+)^\intercal \mu^3 = \dfrac{-x_{3}^{2}+\left(-x_{1}-x_{2}+12\right) x_{3}+5 x_{1}+7 x_{2}-35}{5 x_{1}+7 x_{2}+5 x_{3}-35}.
  \end{equation*}
  Then, $(0,0,4) = 4/5(0,0,5)+1/5(0,0,0)$. Moreover, ${x'} = (0,0,5)$ is also used in convex combinations with other vertices to produce $\bar{x} = (0,7/13,45/13)$, $\check{x} = (1,0,3)$, and $\mathring{x} = (1,9/13,30/13)$ with its multiplier being $9/13$, $3/5$, and $6/13$, respectively. Then, 
  \begin{equation}\label{eq:mulinN-example}
    \mu^2_{x'} = 4/5 \mu^3_{\hat{x}} + 9/13\mu^3_{\bar{x}} + 3/5\mu^3_{\check{x}} + 6/13\mu^3_{\mathring{x}}
  \end{equation}
  and has the expression:
  \begin{equation}\label{eq:exactm2xprime}
    \mu^2_{x'} = \dfrac{x_{3} \left(-7+x_{3}+4 x_{2}+x_{1}\right)}{-35+5 x_{3}+7 x_{2}+5 x_{1}}.
  \end{equation}
 We will leverage \eqref{eq:mulinN-example}, an example of the affine relation \eqref{eq:multipliersk+1tomultipliersk}, for our relaxations. It is useful to note that the denominator in \eqref{eq:mu3definition} is significantly more complicated than the one in \eqref{eq:exactm2xprime}. In other words, \eqref{eq:mulinN-example} can be visualized as a way to cancel the factor $d(x)$ from the denominator. Algorithm~\ref{alg:ddwithmult} discovers this relation by rewriting $4-x_1-x_2-x_3\ge 0$ as 
  \begin{equation}\label{eq:Pbetaineq}
    \begin{split}
    \beta\mu^2 = 4\mu^2_{(0,0,0)} + 9/4\mu^2_{(0,7/4,0)} - 1\mu^2_{(0,0,5)} - 1\mu^2_{(0,2/3,13/3)}\\
     + 3/2\mu^2_{(5/2,0,0)} + 6/7\mu^2_{(13/7,9/7,0)} \ge 0.
    \end{split}
  \end{equation}
  This expression is $[4,-1,-1,-1] R^2$ computed in Step~\ref{algstep:definealphabeta} of Algorithm~\ref{alg:ddwithmult}. The coefficients in this equality are then the slacks for each point with respect to $4-x_1-x_2-x_3\ge 0$, where, for example, the slack for $(0,0,0)$ is $4$.
  We notice that $(0,0,5) \in N^-$, and its barycentric coordinate $\mu^2_{x'}$ is computed from  $\mu^3$ as a non-negative combination of multipliers $\mu^3_x$ where $x$ is a vertex strictly feasible to \eqref{eq:P6}, \textit{i.e.}, its index belongs to $N^+$. If $i$ is the index of such a vertex, its non-negative multiplier is $(\beta_i)/(\beta_i + 1)$, where the denominator $\beta_i+1$ is used to express $R^3$ in dehomogenized form. This results in the expression \eqref{eq:mulinN-example}.
  At the previous iteration, the outer-approximation is formed by the simplex defined by \eqref{eq:P1-3} and \eqref{eq:P4}, and its barycentric coordinates are affine functions of the problem variables. In this case, let $\dot{x} = (0,0,7)$, and we have that 
  \begin{equation}\label{eq:level1mu}
    \mu^1_{\dot{x}}=\frac{x_3}{7} = \frac{5}{7} \mu^2_{x'} + \frac{13}{21}\mu^2_{\starred{x}},
  \end{equation}
  where $\starred{x} = {(0,2/3,13/3)}$. 
  We consider another example of \eqref{eq:multipliersk+1tomultipliersk}, relating $\mu^2_{(0,0,0)}$ to $\mu^3$, where the latter is computed when \eqref{eq:P6} is introduced. Since $(0,0,0)$ satisfies \eqref{eq:P6} strictly, its index is in $N^+$ and its barycentric coordinate is:
  \begin{equation}\label{eq:mulinN+example}
    \mu^2_{(0,0,0)} = \mu^3_{(0,0,0)} + \frac{1}{5} \mu^3_{\bar{x}} + \frac{1}{5}\mu^3_{(0,8/15,52/15)}.
  \end{equation}
  Unlike the previous case, where a new point was obtained as a convex combination of prior vertices, Equation~\eqref{eq:mulinN+example} illustrates the case where the multiplier for the same point changes from one level to another. 
  \qed
\end{example}

\begin{remark}\label{rmk:removeredundant}
  Double-description can be modified so that it does not generate redundant rays \cite{Fukuda1995,padberg2013linear}. However, we do not use these modifications since the resulting barycentric coordinates do not have closed-form expressions. Instead, we use the following constructive approach illustrated in Example~\ref{ex:megaexample}. For any ray, $r$, that is redundant and occurs as column $j$ in $R^i$, we solve a linear feasibility problem to express it as $r=R^i_{\cl{},J}\nu$, where $J=\{\cl{}p_i\}\backslash\{j\}$. Then, we rewrite \eqref{eq:pointrep} as $(x_0;x) = R_{\cl{},J} (\mu^i_J +\mu^i_j\nu) + L^i\theta^i$ and observe that $\mu^i_J + \mu^i_j\nu$ is also a rational expression. Since the multipliers of the retained columns do not reduce, Proposition~\ref{prop:positivemu} continues to hold. \qed
\end{remark}

\section{An algebraic relaxation hierarchy}\label{sec:algebraic}

In this section, we develop the algebraic underpinnings for our hierarchy. Specifically, (i) we utilize the algebraic structure of barycentric coordinates, and (ii)  we invoke Algorithm~\ref{alg:ddwithmult} on multiple constraint orders simultaneously. We refer the reader to Proposition~\ref{prop:expansionhelps} and Corollary~\ref{cor:allorders} for motivation regarding these two prominent features of our relaxation schemes. The next remark provides a concrete illustration of the benefit of the second feature. 

\begin{remark}\label{rmk:singleineqRLT}
  We investigate the relaxation strength that arises from creating various outer-approximations of $P$ simultaneously. Consider the extreme case where $P$ is outer-approximated using one constraint at a time. We argue that such an outer-approximation yields all the constraints in the first-level RLT relaxation. Consider executing Algorithm~\ref{alg:ddwithmult} on the $i^{\text{th}}$ constraint defining $P$. For ease of notation, we express this constraint as $a_{i0} + a^\intercal_i x\ge 0$. Assume without loss of generality that $a_{ij_i}\ne 0$ for some $j_i\in \{1\cl{}n\}$. At Step~\ref{algstep:definealphabeta}, we compute $\alpha = a$ and $\beta=a_0$. Then, Steps \ref{algstep:definemu1}-\ref{algstep:defineL1} compute:
  \begin{subequations}\label{eq:oneineq}
    \begin{align}
       &L^{i,1} = \left(0_{1\times n}; -a_{ij}\sign(a_{ij_i})e_{j_i} + |a_{ij_i}| e_j\right)_{j\in \{1\cl{}n\}\backslash{j_i}}\\
       &\theta^{i,1}(1;x) = \left(x_j/|a_{ij_i}|\right)_{j\in \{1\cl{}n\}\backslash{j_i}}\\
       &R^{i,1} = \left(e_{j_i}; e_0-a_{i0}e_{j_i}\sign(a_{ij_i})\right)\\
       &\mu^{i,1}(1;x) = \left((a_{i0}+a^\intercal_i x)/|a_{ij_i}|,1/|a_{ij_i}|\right),\label{eq:oneineqmu}
    \end{align}
  \end{subequations}
  where we have used the superscript $i$ to track the constraint index. Now, consider Constraint \eqref{eq:DDkyscale}. Since the first element of $\mu^{i,1}$ is $(a_{i0}+a^\intercal x)/|a_{ij_i}|$, this constraint reduces to $(a_{i0}+a^\intercal_i x)y \in (a_{i0}+a^\intercal_i x) C$, which is precisely the constraint derived using RLT. When the set of lineality directions is not empty, the relaxation proposed in Theorem~\ref{thm:simpleAChierarchy} is not as useful if only one outer-approximation of $P$ is used. This is because the constraint $y\in C$ cannot be scaled by $\theta$ whose sign is undetermined. However, with multiple outer-approximations, the introduced variables are $x_iy_j$ for $i\in \{1\cl{}n\}$ and $j\in\{1\cl{}n_y\}$ and the same variables are used to express \eqref{eq:DDkyscale} regardless of which constraint of $P$ is used to outer-approximate $P$. As we have shown, using $\mu^{i,1}$ for each $i$ to scale \eqref{eq:DDkyscale} produces all RLT inequalities at the first-level of the hierarchy. 

  We provide a simple counting argument to better understand the source of this strength in the relaxation. To convexify \eqref{eq:DDkyscale}, where $C$ is scaled with $a_{i0}+a^\intercal_i x$ for each $i$, we introduce $nn_y$ variables, one for each $x_iy_j$. However, when $\mu^{i,1}y$ variables are linearized we introduce $mn_y$ variables. Since $P$ is bounded, $m > n$ and the latter linearization introduces more variables. Additionally, observe that by defining $\mu^{i,1}$ using \eqref{eq:oneineqmu}, and similarly defining $\mu^{i,1}y^\intercal$ as $\frac{1}{|a_{ij_i}|}(a_{i0}; a_i) (1;x)y^\intercal$, the resulting point satisfies the inequalities obtained by scaling \eqref{eq:DDkyscale} using $\mu^{i,1}$.
\qed
\end{remark}

Remark~\ref{rmk:singleineqRLT} strengthens the DP based relaxations proposed in Theorems~\ref{thm:simplehierarchy} and \ref{thm:simpleAChierarchy} by incorporating algebraic expressions for $\mu$. This parallels the relationship between lift-and-project/DP hierarchy and RLT relaxations in 0-1 programs. Specifically, the contrast in lift-and-project and RLT is captured in Proposition~\ref{prop:RLThypercubetight} and Corollary~\ref{cor:allorders}: the former preserves the original constraint order, while the latter selects an arbitrary set of $k$ variables to process their upper-bounding constraints. Similarly, the barycentric coordinates used in Theorem~\ref{thm:simplehierarchy} correspond to the sequence of outer-approximations $P^k$ derived after fixing the constraint order while the hierarchy proposed in this section allows for all possible constraint orders.

As mentioned before, rational functions are required to represent barycentric coordinates for arbitrary polytopes, whereas RLT relaxations rely solely on polynomial expressions. This distinction has significant implications. In fact, it is known that any non-negative polynomial can be written as a sum of squares of rational functions, but not necessarily as a sum of squares of polynomials. This demonstrates that our representations are more expressive than what is considered in RLT relaxations. The next result shows that considering such functions is not only crucial for representation but also leads to the discovery of new inequalities that are not captured in the standard RLT hierarchy. 

\begin{proposition}\label{prop:twoineq}
  Consider $a, b\in \Re^n_+$ and 
  \begin{equation*}
    P = \left\{x\;\middle|\; x\ge 0, 1 - a^\intercal x \ge 0, 1 - b^\intercal x \ge 0\right\}.
  \end{equation*}
  Let $N^> = \{i\mid a_i > b_i\}$, $N^< = \{i\mid a_i < b_i\}$, $N^= =\{i\mid a_i = b_i\}$, and $d(x) = 1 - \sum_{j\in N^<} a_jx_j - \sum_{j\in N^>} b_jx_j$. Then, after processing all the rows, the barycentric coordinates are:
  \begin{subequations}\label{eq:tworow}
  \begin{alignat}{2}
    &\mu(a_ie_0 + e_i) = x_i &\quad& i\in N^= \label{eq:tworowN0} \\
    &\mu(e_0) = (1-a^\intercal x)(1-b^\intercal x)/d(x) \label{eq:tworowNplus1}\\
    &\mu(a_ie_0 + e_i) = x_i(1-b^\intercal x)/d(x) && i \in N^> \label{eq:tworowNplus2} \\
    &\mu(b_ie_0 + e_i) = x_i(1-a^\intercal x)/d(x) && i \in N^< \label{eq:tworowNminus1} \\
    &\begin{aligned}
      &\mu\left((a_ib_j - a_jb_i)e_0 + (b_j-a_j)e_i+(a_i-b_i)e_j\right)\\
      &\quad = x_ix_j/d(x)
    \end{aligned} && i\in N^>, j\in N^<,\label{eq:tworowNminus2}
  \end{alignat}
\end{subequations}
where the argument of $\mu$ is the ray of $P$ corresponding to the barycentric coordinate and the iteration superscript has been dropped for notational convenience.
\end{proposition}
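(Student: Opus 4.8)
The plan is to run Algorithm~\ref{alg:ddwithmult} explicitly on $P$, processing the inequalities in the order $x\ge 0$, then $1-a^\intercal x\ge 0$, then $1-b^\intercal x\ge 0$, and to simply read off the two Case~2 updates. After the $n$ non-negativity rows --- equivalently, starting from the initialization \eqref{eq:nonneginit} --- we are in the state $R^0=\Id_{n+1}$, $\mu^0=(x_0;x)$, $L^0=[\,]$, with homogenization variable $x_0$; since $L^0$ is empty and remains empty, every subsequent iteration executes Steps~\ref{algstep:defineNsets}--\ref{algstep:defineE2}. As in the proof of Proposition~\ref{prop:DDhypercube}, the displayed formulas are the symbolic $\mu$-expressions with $x_0$ substituted by $1$, leaving the columns of $R$ un-dehomogenized.

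First I would record iteration~$1$, where the row $(1;-a^\intercal)$ is processed. Here $\beta=(1,-a_1,\dots,-a_n)$, so $N^+=\{0\}$, $N^0=\{i\ge 1\mid a_i=0\}$, $N^-=\{i\mid a_i>0\}$, and $N_{\text{tot}}=x_0$. Steps~\ref{algstep:defineR2}--\ref{algstep:definemu2} then yield: the column $e_0$ with multiplier $x_0-a^\intercal x$; for each $i$ with $a_i>0$ a new column $a_ie_0+e_i$ with multiplier $x_i$; and for each $i$ with $a_i=0$ the column $e_i$, unchanged, with multiplier $x_i$. Writing $e_i=0\cdot e_0+e_i$, this says uniformly that after iteration~$1$ the column associated with index $i$ is $a_ie_0+e_i$, with multiplier $x_i$, for every $i\in\{1\cl{}n\}$.

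Next I would process the row $(1;-b^\intercal)$ in iteration~$2$. Evaluating this row on the columns of $R^1$ gives slack $1$ on $e_0$ and slack $a_i-b_i$ on $a_ie_0+e_i$; hence the classification is precisely $N^0=N^=$, $N^+=\{0\}\cup N^>$, $N^-=N^<$ --- this is the crux of the bookkeeping. Consequently $N_{\text{tot}}=(x_0-a^\intercal x)+\sum_{i\in N^>}(a_i-b_i)x_i=x_0-\sum_{j\notin N^>}a_jx_j-\sum_{j\in N^>}b_jx_j$, which is the homogenization of $d(x)$. The stated formulas then follow by applying Steps~\ref{algstep:defineR2} and \ref{algstep:definemu2}: the $N^0=N^=$ columns carry over unchanged, giving \eqref{eq:tworowN0}; the $N^+$ columns $e_0$ and $a_ie_0+e_i$ ($i\in N^>$) keep their identity and acquire the multiplier $\mu^1_{(\cdot)}\bigl(1+\sum_{j\in N^<}(a_j-b_j)x_j/N_{\text{tot}}\bigr)$, which simplifies via the identity $N_{\text{tot}}+\sum_{j\in N^<}(a_j-b_j)x_j=x_0-b^\intercal x$ to \eqref{eq:tworowNplus1} and \eqref{eq:tworowNplus2}; and the new columns $\beta_iR^1_{:,j}-\beta_jR^1_{:,i}$, $(i,j)\in N^+\times N^-$, with multiplier $\mu^1_i\mu^1_j/N_{\text{tot}}=x_ix_j/N_{\text{tot}}$, expand to $b_je_0+e_j$ when $i=0$ and to $(a_ib_j-a_jb_i)e_0+(b_j-a_j)e_i+(a_i-b_i)e_j$ when $i\in N^>$, giving \eqref{eq:tworowNminus1} and \eqref{eq:tworowNminus2}.

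There is no conceptual obstacle; the argument is a direct unwinding of the algorithm. The only point requiring care is iteration~$2$: one must observe that after iteration~$1$ the column attached to index $i$ is $a_ie_0+e_i$ for \emph{every} $i$ (the case $a_i=0$ degenerating to $e_i$), so that the sign of $a_i-b_i$ alone determines whether $i$ falls in $N^0$, $N^+$, or $N^-$ --- this is what makes the three sets $N^=,N^>,N^<$ appear. The rest is the two small simplifications of $N_{\text{tot}}$ and of the $N^+$-multipliers, together with the expansion of the two cross-ray linear combinations.
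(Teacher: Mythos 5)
The proposal is correct and follows essentially the same route as the paper's proof: initialize Algorithm~\ref{alg:ddwithmult} with the nonnegativity constraints already processed, unwind the Case~2 update once for $1-a^\intercal x\ge 0$ and once for $1-b^\intercal x\ge 0$, and observe that in the second iteration the slack vector $\beta=(1,a_1-b_1,\ldots,a_n-b_n)$ classifies the columns into $N^0=N^=$, $N^+=\{e_0\}\cup N^>$, $N^-=N^<$. Your write-up merely spells out the multiplier and cross-ray simplifications that the paper leaves as ``can be verified,'' so the two proofs are the same up to level of detail.
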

\long\def\proptwoineqproof{
  We initialize Algorithm~\ref{alg:ddwithmult} so that $x\ge 0$ are already processed by setting $L^0=[]$ and $\theta^0=[]$, $R^0 = \Id_{n+1}$, and $\mu^0 = (x_0;x)$. Then, it is easy to verify that after processing, $1-a^\intercal x\ge 0$, we have
  \begin{equation*}
  R^{1} = \begin{pmatrix}
      1 & a^\intercal\\
       0 &\Id_n
  \end{pmatrix} \text{ and }
  \begin{pmatrix}
      x_0-a^\intercal x 
      \\
      x
\end{pmatrix}.
\end{equation*}
During the last step, $N^{\text{tot}} = d(x)$. Then, $\beta = [1; a^\intercal - b^\intercal]$, $N^+ = N^> \cup \{1\}$, $N^- = N^<$, and $N^0= N^{=}$. It can be verified that barycentric coordinates in \eqref{eq:tworowN0} are obtained by indices in $N^0$, \eqref{eq:tworowNplus1} and \eqref{eq:tworowNplus2} from indices in $N^+$, and \eqref{eq:tworowNminus1} and \eqref{eq:tworowNminus2} from indices in $N^+\times N^-$, where we obtained \eqref{eq:tworowNplus1} and \eqref{eq:tworowNminus1} by choosing $e_0$ from $N^+$ while \eqref{eq:tworowNplus2} and \eqref{eq:tworowNminus2} were obtained by choosing indices in $N^>$ which also belong to $N^+$.
}
\iftoggle{allproofsinpaper}{%
\begin{proof}
\proptwoineqproof\qed
\end{proof}}{}

Proposition~\ref{prop:twoineq} and Theorem~\ref{thm:expressmulin} find relations that do not have their counterparts in RLT. Observe that the numerators of $\mu$ in Proposition~\ref{prop:twoineq} are a product of two constraint expressions. Therefore, Constraint \eqref{eq:DDkyscale}, where we scale $y\in C$ using barycentric coordinates,  bears similarity to RLT except that it is scaled by $1/d(x)$. This scaling along with \eqref{eq:multipliersk+1tomultipliersk} yields nontrivial inequalities.

\begin{remark}\label{rmk:advantageoverRLT}
  Let $\mu_{ij}$ denote $\mu\left((a_ib_j - a_jb_i)e_0 + (b_j-a_j)e_i+(a_i-b_i)e_j\right)$ for $i\in N^>$ and $j\in N^<$ as computed in \eqref{eq:tworowNminus2}. Then,
  \begin{subequations}\label{eq:expressxisimplex+1}
    \begin{align}
    \begin{aligned}
    &x_i = x_i(1-b^\intercal x)/d(x) + \sum_{j\in N^<} (b_j-a_j)x_ix_j/d(x)\\
    &\quad\quad= \mu(a_ie_0+e_i)+ \sum_{j\in N^<} (b_j-a_j)\mu_{ij}
    \end{aligned} & \text{ for } i\in N^{>}\label{eq:xN>}\\
    \begin{aligned}
    &x_j = x_j(1-a^\intercal x)/d(x) + \sum_{i\in N^>}(a_i-b_i)x_ix_j/d(x)\\
    &\quad\quad=\mu(b_je_0+e_j)+ \sum_{j\in N^>} (a_j-b_j)\mu_{ij}
    \end{aligned} & \text{ for } j \in N^{<}\label{eq:xN<},
    \end{align}
  \end{subequations}
  where $d(x)$ is as in Proposition~\ref{prop:twoineq}. The expression $x^\alpha y_\ell/d(x)$ for $\alpha = (\alpha_i)_{i\in [\cl{}n]}$ and $\ell\in\{\cl{}n_y\}$ is linearized as $w_{d,\alpha,\ell}$ and $\mu y$ is then written as a linear function of these introduced variables. For example, given $i\in N^>$ and $j\in N^<$, we have 
  \begin{equation}\label{eq:expressmuy}
    y\mu(a_ie_0+e_i) = w_{d,e_i,\cl{}} - \sum_{j=1}^n b_j w_{d,e_i+e_j,\cl{}} \text{ and } y\mu_{ij} = w_{d,e_i+e_j,\cl{}},\\
  \end{equation}
  which using \eqref{eq:xN>} yields
  \begin{equation}\label{eq:expressxiyell}
    x_iy_\ell = w_{d,e_i,\ell} - \sum_{j=1}^n b_j w_{d,e_i+e_j,\ell} + \sum_{j\in N^<}(b_j-a_j) w_{d,e_i+e_j,\ell}.\\ 
  \end{equation}
  RLT scales $y\in C$  using the numerators of the expressions in \eqref{eq:tworow}. However, the way we cancel $d(x)$ to express $x_iy_{\ell}$ in terms of $x_ix_jy_{\ell}/d(x)$ using the expressions above is different. Since \eqref{eq:expressmuy} and \eqref{eq:expressxiyell} express $x_iy_\ell$ as a conic combination of $\mu y_\ell$, it follows that requiring \eqref{eq:expressxiyell} for all $\ell$ enforces $x_iy \in x_i C$ even though this containment is not imposed directly. \qed
\end{remark}

Observe that \eqref{eq:tworow} does not include all constraint products. For instance, the term $x_i(1 - a^\intercal x)/d(x)$ for $i \in N^>$ is not used.

\begin{example}\label{ex:megacontinues}
  We continue Example~\ref{ex:megaexample} to illustrate the linearization techniques and  inequalities used in our relaxation hierarchy. Assume \eqref{eq:Pineq} is a strict subset of inequalities describing $P$. Then, at level-$3$, we compute $\mu^3_3$ (see \eqref{eq:mu3definition}) and replace $\prod_{i=1}^3 x_i^{\alpha_i}/d_3(x)$, where $d_3(x) = d(x) (-35+5x_3+7x_2+5x_1)$, with $w_{3,\alpha_1,\alpha_2,\alpha_3}$, thereby linearizing $\mu^3_3$. Here, the first index of $w$ tracks the index of the denominator used in the relaxation hierarchy. Then, we obtain:
  \begin{equation}\label{eq:linearmu3}
  \begin{split}
    \mu^3_3 = -1225 w_{3,0,0,1}+840 w_{3,1,0,1}+1645 w_{3,0,1,1}+595 w_{3,0,0,2}\\
    -165 w_{3,2,0,1}
    -830 w_{3,1,1,1}-260 w_{3,1,0,2}-680 w_{3,0,2,1}\\
    -550 w_{3,0,1,2}
    -95 w_{3,0,0,3}
    +10 w_{3,3,0,1}+85 w_{3,2,1,1}\\
    +25 w_{3,2,0,2}
    +200 w_{3,1,2,1}
    +130 w_{3,1,1,2}+20 w_{3,1,0,3}\\
    +80 w_{3,0,3,1}+120 w_{3,0,2,2}+45 w_{3,0,1,3}+5 w_{3,0,0,4},
  \end{split}
  \end{equation}
  Equation~\eqref{eq:linearmu3} linearizes \eqref{eq:mu3definition}. The algebraic representation of $\mu$ also reveals other inequalities such as the one below:
  \begin{equation*}
    \begin{split}
  \frac{(4-x_1-x_2-x_3)x_{3}}{-35+5 x_{3}+7 x_{2}+5 x_{1}} \ge 0, \text{ or }
    4w_{2,0,0,1} -w_{2,1,0,1} - w_{2,0,1,1} - w_{2,0,02} \ge 0.
    \end{split}
  \end{equation*}
  This valid inequality exploits the denominator in \eqref{eq:exactm2xprime} and is also useful when Constraint \eqref{eq:P6} has not yet been processed by Algorithm~\ref{alg:ddwithmult}, \textit{i.e.},\/ the $V$-representation is obtained for the polytope defined by \eqref{eq:P1-3}-\eqref{eq:P5}.
  \qed
\end{example}

We now explore the structure of barycentric coordinates in terms of the constraint expressions. When the if condition in Step~\ref{algstep:ifcondition} is true, $\mu^k$ and $\theta^k$ are defined as affine functions of $\mu^{k-1}$ and $\theta^{k-1}$. However, when this condition is false, $\mu^k$ is obtained as nonlinear expressions of $\mu^{k-1}$ in Step~\ref{algstep:definemu2}. Therefore, to understand the structure of the barycentric coordinates, we will focus on the calculations in Step~\ref{algstep:definemu2}. This will be made simpler by executing the algorithm in two phases. During the first phase, the algorithm rearranges the constraints so that the if condition in Step~\ref{algstep:ifcondition} is true for the initial steps and then remains false during the rest of the algorithm. We describe the first phase of the algorithm in more detail below.

In Phase 1, we execute Algorithm~\ref{alg:ddwithmult} so that, if any row indexed $[(i+1)\cl{}m]$ is not orthogonal to $L^i$ at iteration $i+1$ then it is reindexed to row $i+1$. This way the first iterations of the algorithm identify the lineality space by executing Steps~\ref{algstep:definexi}-\ref{algstep:defineL1} while later iterations execute Steps~\ref{algstep:defineNsets}-\ref{algstep:defineE2}. Then, Algorithm~\ref{alg:ddwithmult} can be interpreted as initially performing a procedure closely related to Gaussian elimination on $(\Id_{n+1};e_0;\A)$. Let $\rho+1$ be the number of linearly independent columns of $(e_0;\A)$. We can assume without loss of generality that these are indexed $[0\cl{}\rho]$ by reordering variables if necessary. We will retain $x_0$ as the column with index $0$. This is possible since the first column of $(e_0;\A)$, being the only column with a $1$ in the first row, is linearly independent of the other columns. Then, for notational simplicity we write 
\begin{equation}\label{eq:simplifyinversion}
  \begin{split}
  &\begin{pmatrix}
    e_{\rho+1;0} & 0_{1\times (n-\rho)}\\
    \A_{[\cl{}\rho],[0\cl{}\rho]} & \A_{[\cl{}\rho],[\rho+1\cl{}]}\\
    \A_{[\rho+1\cl{}],[0\cl{}\rho]} & \A_{[\rho+1\cl{}],[\rho+1\cl{}]}
  \end{pmatrix} = 
  \begin{pmatrix}
    B & N\\
    \Upsilon & \Psi
  \end{pmatrix}
  ,\text{~}
  \begin{pmatrix}
    x_{[0:\rho]}\\
    x_{[\rho+1:n]}
  \end{pmatrix} = 
  \begin{pmatrix}
    x_B\\
    x_N
  \end{pmatrix}
  ,\\
  &\text{ and }
  \begin{pmatrix}
    B & N\\
    0 & \Id_{n-\rho}
  \end{pmatrix} 
  \begin{pmatrix}
    x_B\\
    x_N
  \end{pmatrix} =
  \begin{pmatrix}
    \sigma\\
    \psi
  \end{pmatrix}, 
\end{split}
\end{equation}
where $B\in \Re^{(\rho+1)\times (\rho+1)}$, $\Upsilon\in \Re^{(m-\rho)\times (\rho+1)}$, $N\in \Re^{(n-\rho)\times (\rho+1)}$, and $\Psi\in \Re^{(m-\rho)\times(\rho+1)}$.
The first $\rho$ iterations of Algorithm~\ref{alg:ddwithmult} can be visualized as performing the following change of variables:
\begin{equation}\label{eq:phase1end}
  \begin{pmatrix}
    \Id_{\rho+1} & 0\\
    0 & \Id_{n-\rho}\\
    B & N\\
    \Upsilon & \Psi
  \end{pmatrix}
  \begin{pmatrix}
  x_B\\
  x_N
  \end{pmatrix}
  = \begin{pmatrix}
  B^{-1} & -B^{-1}N\\
  0 & \Id_{n-\rho}\\
  \Id_{\rho+1} & 0\\
  \Upsilon B^{-1} & 0
\end{pmatrix}
\begin{pmatrix}
  \sigma\\
  \psi
\end{pmatrix},
\end{equation}
where we have used linear dependence of the last $n-\rho$ columns of $\A$ to write $\Upsilon B^{-1}N$ as $\Psi$. The above matrix operation is interpreted as follows. Consider the matrix on the right-hand-side of \eqref{eq:phase1end}. Its first $n+1$ rows are $(R^\rho; L^\rho)$, where $R^{\rho} = (B^{-1}; 0)$ and $L^\rho = (-B^{-1}N; \Id_{n-\rho})$. Moreover, $(\mu^\rho; \theta^\rho) = (\sigma;\psi)$. Then \eqref{eq:DDknonneg} enforces $x_0\ge 0$ and $\A_{[\cl{}\rho],\cl{}}(x_0;x)\ge 0$ because $0\le \mu^\rho = \sigma = Bx_B + Nx_N$. In other words, this phase of the algorithm writes $K^\rho$ in the space of $(\mu^\rho,\theta^\rho)$ as the Minkowski sum of the non-negative orthant and a lineality space; see Lemma~\ref{lem:RpointedWithL}. 
Thus, Phase 1 generalizes the treatment of one constraint in~Remark~\ref{rmk:singleineqRLT}. 

Now, we consider the steps after the completion of the first phase, where the lineality space is identified. Observe that Steps~\ref{algstep:defineNsets}-\ref{algstep:defineE2} do not alter or utilize $\theta$ and $L$. The cone is reformulated as $\{(x_0;x)\mid (x_0;x) = R^\rho\sigma + L^\rho\psi, \sigma \ge 0, \Upsilon B^{-1}\sigma \ge 0\}$ and, the algorithm focuses on $\{\sigma\mid \sigma\ge 0, \Upsilon B^{-1}\sigma \ge 0\}$ in the second phase. Therefore, executing Phase 1 before initialization, we can write the cone as $\{(\bar{x}_0;\bar{x})\mid (\bar{x}_0;\bar{x})\ge 0, \tilde{A} (\bar{x}_0,\bar{x})\ge 0\}$, where $(\bar{x}_0;\bar{x}) = \sigma$ and $\tilde{A} = \Upsilon B^{-1}$. Then, the algorithm only runs Phase 2 and computes $\bar{R}$ and $\bar{\mu}(\sigma)$ such that $\sigma = \bar{R}\bar{\mu}(\sigma)$ and  
\begin{equation}\label{eq:changeafterphase2}
  \begin{pmatrix}x_B\\x_N\end{pmatrix} = 
  \begin{pmatrix}B^{-1}\bar{R}\\0\end{pmatrix} \bar{\mu}(Bx_B + Nx_N) + \begin{pmatrix} -B^{-1}N\\\Id_{n-\rho}\end{pmatrix}x_N,
\end{equation}
which gives the barycentric coordinates in the original space.
Moreover, since we focus on polytopes, Phase 1 will compute $L^\rho=[]$ and we will henceforth write $(\bar{x}_0;\bar{x}) = \bar{R}^k\bar{\mu}^k$ when lineality directions can be ignored.

  The reader may have suspected based on expressions in Proposition~\ref{prop:twoineq} and Examples~\ref{ex:nontrivialbarycentric} and \ref{ex:megaexample} that the barycentric coordinates are related to the product of constraint expressions. We next argue that this intuition indeed holds. To see this, we first interpret $\beta\mu^{k-1}(x_0;x)$, computed at iteration $k$, in terms of the original constraint expression $\A_{k,\cl{}}(x_0;x)\ge 0$. This relationship is easy to see when the polytope is full-dimensional, since $N^+\ne \emptyset$ at any of the iterations. Otherwise, all the rays following this iteration would satisfy $\A_{k,\cl{}}(x_0;x) = 0$ violating the assumed full-dimensionality. Then, the two expressions are equivalent as follows easily from Theorem~\ref{thm:expressmulin}. Specifically,
  \begin{equation}\label{eq:lifttobeta}
    \begin{split}
    \beta^\intercal\mu^{k-1} = \A_{k\cl{}} R^{k-1} \mu^{k-1} = \A_{k\cl{}} \Id_{n} (P^1_\pi)^\intercal D^1\cdots (P^{k-1}_\pi)^\intercal D^{k-1} \mu^{k-1}\\ = \A_{k\cl{}}(x_0;x),
    \end{split}
  \end{equation}
  where $D^1,\ldots,D^{k-1}$ are as defined in Theorem~\ref{thm:expressmulin}. However, if the polytope is not full-dimensional, the two expressions are not the same, even though they evaluate to the same value for every $(x_0;x)\in P$. Nevertheless, we argue that each barycentric coordinate takes the following form.

  \begin{definition}\label{defn:nonnegratio}
    Let a constraint expression refer to any linear expression whose non-negativity is one of the constraints defining $K$. We say a function is a constraint product ratio if it can be expressed as a ratio of two functions, each a non-negative combination of products of constraint expressions. \qed 
  \end{definition}

  The class of constraint product ratios is closed under positive combinations and products and, therefore, under the pruning operation in Remark~\ref{rmk:removeredundant}.

  \begin{proposition}\label{prop:mustructure}
    Each barycentric coordinate in Algorithm~\ref{alg:ddwithmult} is a constraint product ratio. If the number of executions of Step~\ref{algstep:definemu2} are $i$, then the degree of the numerator (resp. denominator) of the barycentric coordinate is bounded from above by $\frac{3^i}{2}+\frac{1}{2}$ (resp. $\frac{3^i}{2}-\frac{1}{2}$).
  \end{proposition}
  \long\def\propmustructureproof{
    At the end of iteration $i$, we augment Algorithm~\ref{alg:ddwithmult} to rewrite $\A_{j,\cl{}} (x_0;x)$, for all $j\ge i$, as $(\beta^{i,j})^\intercal \mu^{i}(x_0;x)$ so that both these expressions have the same evaluation at each point $(x_0;x)\in K^i$. This is done by calculating $(\beta^{i,j})^\intercal$ as $(\beta^{i-1,j})^\intercal (P^i_\pi)^\intercal D^i$. We first analyze the change of variables performed at the end of Phase 1. Assume that we have an expression that is expressed as a constraint product ratio of $(\bar{x}_0,\bar{x})$ after Phase 2. This ratio uses expressions
    $\tilde{A}_{j,\cl{}}(\bar{x}_0;\bar{x})$ (resp. $\bar{x}_j$), for some $j$, which, using $\Upsilon B^{-1}N = \Psi$ and \eqref{eq:simplifyinversion}, become $\Upsilon_{j,\cl{}} B^{-1} (Bx_B + Nx_N) = \Upsilon_{j,\cl{}}x_B + \Psi_{j,\cl{}} x_N$ (resp. $\bar{x}_i = Bx_B+Nx_N$). In other words, each is a constraint product ratio in the original variable space after the change of variables. Therefore, we can limit our attention to Phase 2.
    
    We will suppress the function arguments for $\mu$ for notational convenience.
    We now show that at the end of iteration $k$, $\mu^k$ is a constraint product ratio. Moreover, there exists $\phi^{i,\ell}(\bar{x}_0;\bar{x})$, a constraint product ratio, such that $(\beta^{i,\ell})^\intercal\mu^i - \phi^{i,\ell}(\bar{x}_0;\bar{x})$ simplifies to $\tilde{A}_{\ell,\cl{}}(\bar{x}_0;\bar{x})$. Observe that this implies, after simplification, that $(\beta^{i,\ell})^\intercal \mu^i =  \tilde{A}_{\ell,\cl{}}(\bar{x}_0;\bar{x}) + \phi^{i,\ell}(\bar{x}_0;\bar{x})$, which shows that $(\beta^{i,\ell})^\intercal \mu^i$ is also a non-negative combination of inequalities satisfied by $K$. Since the first two equalities in \eqref{eq:lifttobeta} show that $\beta^{j,j}$ is the same as $\beta$ computed at iteration $j$, we obtain that $\beta^\intercal \mu^j$ is a constraint product ratio. We consider the base case at iteration $0$. Since $R^0 = \Id$ and $\mu^0 = (\bar{x}_0,\bar{x})$, it follows that $\beta^{i,j}(\bar{x}_0;\bar{x}) = \tilde{A}_{j,\cl{}}(\bar{x}_0,\bar{x})$.
  
    Assume now that the result is true at iteration $i$, and we prove that it holds for iteration $i+1$. For $j\in N^0$, $\mu^{i+1}_j = \mu^i_j$, and so, the result follows by induction. For $j\in N^+$, we have $\mu^{i+1}_j = \mu^i_j (\beta^\intercal \mu^{i})/(\beta^\intercal_+ \mu^i)$, where $\beta^\intercal_+$ is the coordinate-wise maximum of $0$ and $\beta$. Then, by induction, $\beta^\intercal \mu^i$, $\mu^i_j$, and $\beta^\intercal_+ \mu^i$ are constraint product ratios and the result follows. Now, consider $(j,j')$ where $j\in N^+$ and $j'\in N^-$. Then, the result holds for $\mu^{i+1}_{j,j'} = \mu^{i}_j\mu^{i}_{j'} /(\beta^\intercal_+ \mu^i)$ because $\mu^i_j$, $\mu^i_{j'}$, and $\beta^\intercal_+ \mu^i$ are constraint product ratios. 
    
    We show that $(\beta^{i+1,\ell})^\intercal \mu^{i+1}$ is of the specified form. If $N^+ \ne \emptyset$, we write $(\beta^{i,\ell})^\intercal \mu^i = (\beta^{i,\ell})^\intercal (P^{i+1}_\pi)^\intercal D^{i+1}\mu^{i+1} = \beta^{i+1,\ell}\mu^{i+1}$. In other words, since each $\mu^i$ can be written as an affine combination of $\mu^{i+1}$, it follows that $(\beta^{i+1,\ell})^\intercal \mu^{i+1}$ and  $(\beta^{i,\ell})^\intercal \mu^i$ share the same expression. Therefore, $\phi^{i+1,\ell}(\bar{x}_0;\bar{x}) = \phi^{i,\ell}(\bar{x}_0;\bar{x})$. Now, assume that $N^+ = \emptyset$. Then, 
    \begin{equation*}(\beta^{i,\ell})^\intercal \mu^i = \sum_{j\in N^0} (\beta^{i,\ell})^\intercal \mu^i_j + \sum_{j\in N^-}(\beta^{i,\ell})^\intercal \mu^i_j = (\beta^{i+1,\ell})^\intercal \mu^{i+1} + \sum_{j\in N^-}(\beta^{i,\ell})^\intercal \mu^i_j,
    \end{equation*} 
    where the second equality is because $\mu^{i+1}$ retains the entries $(\mu^i_j)_{j\in N^0}$ unaltered and drops the entries $(\mu^i_j)_{j\in N^-}$. We then define 
    \begin{equation*}\phi^{i+1,\ell}(\bar{x}_0;\bar{x}) = \phi^{i,\ell}(\bar{x}_0;\bar{x}) - \sum_{j\in N^-}(\beta^{i,\ell})^\intercal \mu^i_j(\bar{x}_0;\bar{x})
    \end{equation*}
    and show that $\phi^{i+1,\ell}(\bar{x}_0;\bar{x})$ is a constraint product ratio. If $\beta^{i,\ell}_j < 0$, the $j^{\text{th}}$ term in the summation is retained without change. If $\beta^{i,\ell}_j > 0$, we instead express $-\mu^{i}_j(\bar{x}_0;\bar{x})$ as a constraint product ratio which allows us to replace $\beta^{i,\ell}_j$ with $-\beta^{i,\ell}$, which is negative. After this change, it follows from the inductive hypothesis that $\phi^{i+1,\ell}(\bar{x}_0;\bar{x})$ is a constraint product ratio being the sum of constraint product ratios. 
  
    It remains to show that, for $j\in N^-$, $-\mu^i_j$ can be expressed as a non-negative combination of inequalities defining $K$. Clearly, $\mu^{i}_j = 0$ for every point in $K^{i+1}$ because $\A_{i+1,\cl{}}$ is satisfied at equality by every ray in $R^{i+1}$ and, so, every point $(\bar{x}_0;\bar{x}) \in K^{i+1}$ satisfies $\tilde{A}_{i+1,\cl{}}(\bar{x}_0;\bar{x}) = 0$. Let $T$ be the affine hull of $K^{i+1}$. By the induction hypothesis, the numerator of $\mu^{i}_j$ is a non-negative combination of product of constraint expressions. Since each of these terms is non-negative over $K^{i+1}$, they are all zero on $T$. This implies that, for each term, at least one of the constraint expressions is zero over $T$. To see this, consider any linear expression that is not zero over $T$. Then, it is zero over a linear subspace of $T$ of strictly lower dimension. In other words, we can focus on the product of the remaining expressions, which must be zero over $T$ since this product is continuous over $T$. Eliminating linear expressions one-by-one, we identify a linear expression in the product that is zero over $T$. Since $T$ is expressible as an intersection of linear equalities implied in $\tilde{A}_{1:i+1,\cl{}}(\bar{x}_0;\bar{x})\ge 0$, the identified expression is a linear combination of the expressions defining these linear equalities.  It follows that the negative of this linear combination is also a linear combination and can be used to express $-\mu^i_j(\bar{x}_0;\bar{x})$ as a non-negative combination of inequalities defining $K$.

    Finally, we argue that the degree of the polynomials in the numerator and denominator of $\mu^k$ are bounded from above by $\frac{3^i}{2}+\frac{1}{2}$ and $\frac{3^i}{2}-\frac{1}{2}$ respectively, where $i$ is the number of iterations where the if condition in Step~\ref{algstep:ifcondition} is false. At $i=0$, the numerator has a degree at most $1$ and denominator has a degree $0$. At Step~\ref{algstep:definemu1} of Algorithm~\ref{alg:ddwithmult}, we transform $\mu$ linearly. Therefore, the degree of the polynomials does not change. Since there are $n$ lineality directions when the algorithm starts and this cardinality reduces by $1$ each time the if condition at Step~\ref{algstep:ifcondition} is true, there are exactly $n$ such iterations. The remaining $m-n$ iterations execute Steps~\ref{algstep:defineNsets}-\ref{algstep:defineE2}. We are interested in Step~\ref{algstep:definemu2} during these iterations. Since the numerators of $\mu^{k-1}$ have degree less than $\frac{3^n}{2}+\frac{1}{2}$ and the denominator of $N_{\text{tot}}$ has degree at most $\frac{3^i}{2}-\frac{1}{2}$, the numerator of $\mu^k$ has degree at most $3^i + 1 + \frac{3^i}{2} -\frac{1}{2} = \frac{3^{i+1}}{2} + \frac{1}{2}$. Similarly, the denominator of $\mu^{k-1}$ has degree no more than $\frac{3^i}{2}-\frac{1}{2}$ and the numerator of $N^{\text{tot}}$ has degree no more than $\frac{3^i}{2}+\frac{1}{2}$. Therefore, the degree of the denominator is no more than $3^i - 1 + \frac{3^i}{2} + \frac{1}{2} = \frac{3^{i+1}}{2}-\frac{1}{2}$.
  }
  \iftoggle{allproofsinpaper}{%
  \begin{proof}
    \propmustructureproof\qed
  \end{proof}}{}

  The above result has important consequences for certificates of optimality for concavo-convex programs. We elaborate this in the context of DBP.

  \begin{theorem}\label{thm:representationresult}
    If $x^\intercal Q y\ge \delta$ over $\bigl\{(x,y)\mid P\times P^y\bigr\}$, where $P$ and $P^y$ are non-empty polytopes, then
    \begin{equation}\label{eq:bilrepresentation}
    z(x)(x^\intercal Q y -\delta) =  q(x,y),
    \end{equation}
    where $z(x)$ is a non-negative combination of products of constraints in $P$. Moreover, $q(x,y)$ is a non-negative combination of products of constraints in $P$ where each such product may additionally be multiplied with a constraint in $P^y$. Moreover, both $z(x)$ and $q(x,y)$ have a finite degree and $z(x) > 0$ for  $x\in \ri(P)$. These polynomials are derived in closed form from the dual solution to \eqref{eq:DBPhull} and Algorithm~\ref{alg:ddwithmult}. 
  \end{theorem}
  \begin{proof}
  We consider minimizing $x^\intercal Q y$ over $P\times P^y$. By Proposition~\ref{prop:mustructure}, the polynomials used to express $\mu^m$ as a constraint product ratio have a degree at most $\frac{3^{m-n}}{2}+\frac{1}{2}$. Then, the expressions for the convex multipliers $\lambda$ in \eqref{eq:DBPhull} associated with the vertices of $P$ can be found by scaling $\mu^m(x_0;x)$ using the entries in $R^m_{1\cl{}}$ (see Definition~\ref{defn:dehomogenize}). The feasible region of \eqref{eq:DBPhull} is bounded. This is because $Y^k_{\cl{}i} \in \conv\bigl((0,0)\cup \{(1,y)\mid y\in P^y\}\bigr)$. Moreover, the feasible region is non-empty since $P\times P^y$ is non-empty and each such point can be lifted to become feasible to \eqref{eq:DBPhull}. Then, by strong duality of linear programming, it follows that there is a non-negative matrix $S\in \Re^{m_y\times p}_+$ and $\gamma \in \Re^p_+$ such that:
  \begin{equation}\label{eq:expressTrace}
    \begin{split}
  \Trace(QYV^\intercal) &= \Trace\left(S,\begin{pmatrix}b^y  &- A^y\end{pmatrix}\begin{pmatrix}\lambda^\intercal\\ Y\end{pmatrix}\right) + \delta \lambda^\intercal e + \gamma^\intercal \lambda\\
  &=\Trace\left(S,\begin{pmatrix}b^y  &- A^y\end{pmatrix}\begin{pmatrix}\lambda^\intercal\\ Y\end{pmatrix}\right) + \delta + \gamma^\intercal \lambda,
    \end{split}
  \end{equation}
  where the second equality is because the barycentric expressions satisfy $\lambda^\intercal e = 1$. Then, by Proposition~\ref{prop:mustructure} each $\lambda$ is a constraint product ratio whose denominator is bounded by a degree $\frac{3^{m-n}}{2}-\frac{1}{2}$ polynomial. We refer to this polynomial as $z(x)$. By writing $Y$ as $y\lambda^\intercal$, we multiply \eqref{eq:expressTrace} by $z(x)$ to derive the desired expression in the statement of the theorem. The positivity of $z(x)$ follows from the positivity of $\mu$ over $\ri(P)$ shown in Proposition~\ref{prop:positivemu}.
  \qed
  \end{proof}

  Observe that the representation~\eqref{eq:bilrepresentation} shows that $x^\intercal Q y \ge \delta$ at $(x,y)\in \ri(P)\times P^y$ because $z(x) > 0$ over $\ri(P)$.
  Then, since $x^\intercal Q y$ is continuous, it is not smaller than $\delta$ over $P\times P^y$. Similar proofs can be constructed for VXP except that the dual certificates will need to extracted using Rockafellar-Fenchel duality theorem from $\DDP_\V$ \cite{Rockafella1970}. For the case of a simple polytope, the calculation of the associated polynomials is easier. This is because there is an explicit formula~\cite{warren2007barycentric} which expresses the numerator of a barycentric coordinate as a non-negative scalar multiple of the product of those constraint expressions that are strictly positive at the corresponding vertex. The denominator is then the sum of the expressions in the numerator. We illustrate the procedure in the proof of Theorem~\ref{thm:representationresult} using the following example.
  \begin{example}\label{ex:DBPproof}
    Consider the following disjoint bilinear program:
    \begin{equation}\label{eq:DBPex}
    \begin{alignedat}{3}
      &\min &\;& 
      \rlap{$\displaystyle {}-27x_1y_1 - 108x_1y_2 + 90x_2y_1 - 32x_2y_2$}\\
      &&&+ 180x_1 - 180x_2 - 180y_1 + 204y_2\\
      &\text{s.t.}&& x\in P := 
       \left\{
          \begin{aligned}
          &x_1-x_2\ge -2\\
          &-3x_1+2x_2\ge -6\\
          &-3x_1-4x_2\ge -15\\
          &x_1\ge 0
          \end{aligned}
       \right\}, &\;&
       y \in P^y := 
       \left\{
          \begin{aligned}
          &y_1-y_2\ge -2\\
          &-3y_1+2y_2\ge -6\\
          &-3y_1-4y_2\ge -15\\
          &y_1,y_2\ge 0
          \end{aligned}
       \right\}.
    \end{alignedat}
  \end{equation}
    There are several optimal solutions for \eqref{eq:DBPex}. One of these optimal solutions is:
    \begin{equation*} (x_1,x_2,y_1,y_2) = (0,2,0,0)
    \end{equation*}
    which has an optimal value of $-360$. 
    When solved with Gurobi 12.0.1 \cite{gurobi}, the solver explores 84170 nodes \cite{gurobi} to prove optimality within $0.0086\%$ tolerance. The optimal solution for the first-level RLT relaxation is:
    \begin{equation*}
    \begin{split}
    (x_1,x_2,y_1,y_2,xy_{11},xy_{12},xy_{21},xy_{22}) 
    = \frac{1}{23}(12,30,18,12,12,36,-36,18),
    \end{split}
    \end{equation*}
    with an objective value of $-524\frac{8}{23}$ exhibiting significant relaxation gap. Instead, as shown in Lemma~\ref{lemma:disjVrep}, \eqref{eq:DBPhull} achieves the optimal value. Now, we extract an algebraic proof from its solution using Theorem~\ref{thm:representationresult}.
    To do so, we use Algorithm~\ref{alg:ddwithmult} to compute:
    \begin{equation}\label{eq:lambdaYex}
    \begin{bmatrix}
    \lambda^\intercal\\
    Y
    \end{bmatrix} = 
    \frac{1}{150 - 40x_2 + 33x_1}
    \begin{bmatrix}
    1\\
    y_1\\
    y_2\\
    \end{bmatrix}
    \begin{bmatrix}
      (-15 + 3x_1 + 4x_2)(-6 + 3x_1 - 2x_2)\\
      -6(2 + x_1 - x_2)(-15 + 3x_1 + 4x_2)\\
      -7x_1(-6 + 3x_1 - 2x_2)\\
      18x_1(2 + x_1 - x_2)
    \end{bmatrix}^\intercal.
    \end{equation}
    Then, we retrieve the optimal dual solution for \eqref{eq:DBPhull}. The constraints with non-zero dual values are:
    \begin{alignat*}{3}
      &\text{Constraint in \eqref{eq:DBPhull}} &\quad&& \text{Dual Value}\\
      &6\lambda_1 - 3 Y_{12} + 2 Y_{22} \ge 0 &&& 150\\
      &15\lambda_4 - 3Y_{14} - 4Y_{24}\ge 0 &&& 42\\
      &Y_{13}\ge 0 &&& 63\\
      &Y_{21}\ge 0 &&&140\\
      &\lambda_1 + \lambda_2 + \lambda_3 + \lambda_4 = 1 &&& 360
    \end{alignat*}
    Since the last constraint is implicit in the definition of barycentric coordinates, it is not needed for the proof. We multiply each constraint with its dual value (except for the last one) and use the definitions of $\lambda$ and $Y$ from \eqref{eq:lambdaYex} to obtain
    \begin{equation*}
    \frac{(150-40x_2+33x_1)(\text{objective of \eqref{eq:DBPex}} + 360)}{(150-40x_2+33x_1)} \ge 0.
    \end{equation*}
    Moreover, $150-40x_2+33x_1 > 0$ over $P$. This gives the algebraic identity \eqref{eq:bilrepresentation} claimed in Theorem~\ref{thm:representationresult} proving that the minimum value in \eqref{eq:DBPex} is $-360$.
    \qed
  \end{example}
  
  Algorithm~\ref*{alg:ddwithmult} derives several such denominator expressions. Although this follows from Theorem~\ref{thm:representationresult}, we detail next how the relaxations for polynomial functions~(see \cite{lasserre2002semidefinite,sherali2013reformulation}) can be used to develop inequalities relating monomials divided by such expressions. 

  \begin{remark}\label{rmk:nonlinineq}
    Let $\Omega \subseteq \Z^n_+$ and $d_\ell(x)$ be expressible as an affine expression in $(x^\alpha)_{\alpha\in \Omega}$. Define $\tilde{x} = (x^\alpha)_{\alpha\in\Omega}$, 
    \begin{align*}
      &\mathcal{F} = 
      \left\{
        \left(\begin{pmatrix}1 & y^\intercal\\\tilde{x} &\tilde{x}y^\intercal\end{pmatrix},
        d_{\ell}(x)\begin{pmatrix}1 & y^\intercal\\ \tilde{x} & \tilde{x}y^\intercal\end{pmatrix}\right)\;\middle|\; (x,y)\in P\times C\right\}, \text{ and}\\
      &\mathcal{G} =
      \left\{
        \left(\dfrac{1}{d_\ell(x)}\begin{pmatrix}1 & y^\intercal\\\tilde{x} &\tilde{x}y^\intercal\end{pmatrix},
        \begin{pmatrix}1 & y^\intercal\\ \tilde{x} & \tilde{x}y^\intercal\end{pmatrix}\right)\;\middle|\; (x,y)\in P\times C\right\},
    \end{align*}
    so that $d_\ell(x)>0$ over $P$ and $P\ne\emptyset$. For any set $S$, let $0\closure\conv(\mathcal{S})$ be the recession cone of $\closure\conv(\mathcal{S})$. Then, it has been shown that \citep[Proposition 1]{he2023convexification}
    \begin{align*}
      &\closure\conv(\mathcal{F}) = 
      \left\{
        \left(W,W^{d_\ell}\right)\;\middle|\; 
          \left(W,W^{d_\ell}\right)\in \rho\closure\conv(\mathcal{G}), W_{1,1}=1,
          \rho\ge 0
      \right\} \\    
      &\closure\conv(\mathcal{G}) = 
      \left\{
        \left(\overline{W}^{d_\ell},\overline{W}\right)\;\middle|\; 
          \left(\overline{W}^{d_\ell},\overline{W}\right)\in \sigma\closure\conv(\mathcal{F}), \overline{W}_{1,1}=1,
          \sigma\ge 0
      \right\}.
    \end{align*}
    It follows that inequalities for $\mathcal{F}$ are in correspondence with the inequalities for $\mathcal{G}$. We may thus utilize polynomial inequalities to find valid inequalities for $\mathcal{G}$. For example, we may require that $1/d_{\ell}(x) \left(1;(x^\alpha)_{\alpha\in \Omega}\right)\left(1,(x^\alpha)_{\alpha\in \Omega}^\intercal\right)$ is a positive-semidefinite matrix. The inequalities for $\conv(\mathcal{G})$ are useful in our context. This is because we leverage Proposition~\ref{prop:mustructure} to express barycentric products as a sum of constraint product ratios where each ratio consists of a product of constraint expressions as its numerator. Then, we expand the product in the numerator as a linear combination of $x^\alpha/d_{\ell}(x)$, for some $\alpha$ as in the definition of $\conv(\mathcal{G})$. Moreover, we multiply these barycentric coordinates with $y$ in \eqref{eq:DDkyscale}, which after expansion result in the terms $\tilde{x}y^\intercal/d_{\ell}(x)$ in $\mathcal{G}$. \qed
  \end{remark}

  \paragraph{Hierarchy:} Our discussions have shown various ways to tighten the hierarchies for $(\D)$ and $(\AC)$ that were presented in Theorems~\ref{thm:simplehierarchy} and Theorem~\ref{thm:simpleAChierarchy} by using the algebraic structure of the barycentric coordinates. We summarize these ideas without attempting to be comprehensive. We say $\Xi_k$ is a set of constraint orders, where each constraint order $\varsigma\in \Xi$ is a sequence of $k$ indices in $\{1\cl{}m\}$ without repetitions. To keep track of the constraint orders, the iteration superscripts will be replaced by $\varsigma$ so that \eqref{eq:pointrep} will be written as: $(x_0;x) = R^\varsigma \mu^\varsigma + L^\varsigma \theta^\varsigma$, $\mu^\varsigma\ge 0$. We will denote subsequences of $\varsigma$ consisting of $i^{\text{th}}$ to $j^{\text{th}}$ processed constraint as $\varsigma[i\cl{}j]$. The denominator associated with a barycentric coordinate $\mu^\varsigma_j$ will be denoted as $d_{\varsigma,j}(x_0;x)$. At level $k$, let $\eta$ be the degree of the largest degree monomial in the numerators for $\mu^\varsigma$. Here, we assume that $L^\varsigma = []$. As mentioned earlier, this can be achieved at the initialization step by performing Gaussian elimination and redefining variables. Then, we construct the following relaxation:
  \begingroup
  \allowdisplaybreaks
  \begin{subequations}\label{eq:tighthierarchy}
  \begin{alignat}{4}
    &(\DE^k)&\quad&\min&\;& z\\
    &&&\text{s.t.}&&z \ge \sum_{i=1}^{p_\varsigma} \ftilde\left(R^k_{[2\cl{}],i}\mu^\varsigma_i(1;x),y\mu^\varsigma_i(1;x),\mu^\varsigma_i(1;x)\right)&\quad&\varsigma\in \Xi\label{eq:objDEk} \\
    &&&&&y\mu^\varsigma_i(1;x)\in \mu^\varsigma_i(1;x) C &\quad& \begin{aligned}&i\in \{\cl{}p_\varsigma\},\\&\varsigma\in \Xi\end{aligned}\label{eq:DEkyscale}\\
    &&&&&\mu^\varsigma_i(1;x)\ge 0 &&\begin{aligned}&i\in \{\cl{}p_\varsigma\},\\&\varsigma\in \Xi\end{aligned}\label{eq:DEknonneg}\\
    &&&&&\mu^{\varsigma[1\cl{}t-1]} = (P^\varsigma_\pi)^\intercal \begin{pmatrix}F^{\varsigma[1\cl{}t]} & G^{\varsigma[1\cl{}t]}\\ 0 & D^{\varsigma[1\cl{}t]}\end{pmatrix}\mu^{\varsigma[1\cl{}t]} &&\begin{aligned}&t\le k,\\&\varsigma\in \Xi\end{aligned}\label{eq:DEkmurecurse}\\
    &&&&&\mu^{\varsigma[1\cl{}t-1]}y^\intercal = (P^\varsigma_\pi)^\intercal \begin{pmatrix}F^{\varsigma[1\cl{}t]} & G^{\varsigma[1\cl{}t]}\\ 0 & D^{\varsigma[1\cl{}t]}\end{pmatrix}\mu^{\varsigma[1\cl{}t]}y^\intercal &&\begin{aligned}&1\le t\le k,\\ &\varsigma\in \Xi\end{aligned}\label{eq:DEkmuy}\\
    &&&&&\dfrac{\prod_{j\in \Theta} \A_{j,\cl{}}(1;x)}{d_{\varsigma[1:t],j}(1;x)} \ge 0 &&\begin{aligned}&|\Theta|\le \eta,\\&1\le t\le k\\&\varsigma\in \Xi.\end{aligned}\label{eq:DEkprodcons}
  \end{alignat}
\end{subequations}
\endgroup
We expand expressions and linearize $x^\alpha/d_{\varsigma,j}(1;x)$ and $yx^\alpha/d_{\varsigma,j}(1;x)$ ensuring that the denominators which evaluate to the same expression are linearized using the same variables. We use Remark~\ref{rmk:nonlinineq} to introduce inequalities relating $x^\alpha/d_{\varsigma,j}(1;x)$ and $yx^\alpha/d_{\varsigma,j}(1;x)$ for various $\alpha$ with one another. There are further opportunities to tighten the relaxation. First, we can disaggregate \eqref{eq:DEkyscale} replacing $\mu^\varsigma(1;x)$ with the left-hand-side of \eqref{eq:DEkprodcons} using Proposition~\ref{prop:mustructure}. 
Second, as in Remark~\ref{rmk:nonlinineq}, we may require that $1/d_{\varsigma,j}(1;(x^\alpha)_{\alpha\in \mathcal{M}})(1,(x^\alpha)^\intercal_{\alpha \in \mathcal{M}})$ is a positive semidefinite matrix, where $\mathcal{M}$ is a set of monomial powers. We can also multiply these matrix constraints with those defining $P$ and/or $C$ \cite{lasserre2002semidefinite}. Third, if the objective $f(x,y)$ is written as a sum of terms, we can disaggregate \eqref{eq:objDEk} for each term in the summation. For example, this is the case for $(\AC)$, where we replace the objective with $\sum_{j=1}^n z_j$ and \eqref{eq:objDEk} with
\begin{equation*}
  z_j \ge \sum_{i=1}^{p_k} g_j\left(\dfrac{y\mu^\varsigma_i(1;x)}{\mu^\varsigma_i(1;x)}\right)R^\varsigma_{j,i} \mu^\varsigma_i(1;x).
\end{equation*}
It is useful to consider $(\DBP)$, where $g_j(y) = Q_{j,\cl{}} y$. We introduce $z_j \ge \sum_{i=1}^{p_k} Q_{j,\cl{}} y\mu^\varsigma_i(1;x)R^\varsigma_{j,i} = Q_{j,\cl{}}y \left(\mu^\varsigma(1;x)\right)^\intercal R^\varsigma_{j,\cl{}} = Q_{j,\cl{}}y (1;x)^\intercal$, where the last equality follows from \eqref{eq:DEkmuy}. Even though we required that the lineality space was empty for Theorem~\ref{thm:simpleAChierarchy}, this is no longer required here, and a meaningful relaxation can still be constructed if there are sufficient constraint orders processed by Algorithm~\ref{alg:ddwithmult} as shown in Remark~\ref{rmk:singleineqRLT}. Then, we simply replace $R^\varsigma \mu(1;x) = (1;x)$ with $R^\varsigma \mu(1;x) + L^\varsigma \theta(1;x)$ instead to obtain 
\begin{equation*}
  z_j \ge Q_{j,\cl{}}y \left(\left(\mu^\varsigma(1;x)\right)^\intercal R^\varsigma_{j,\cl{}} + \left(\theta^\varsigma(1;x)\right)^\intercal L^\varsigma_{j,\cl{}}\right).
\end{equation*}

\begin{theorem}\label{thm:tighthierarchy}
The proposed hierarchy~\eqref{eq:tighthierarchy} is at least as tight as \eqref{eq:DD}.
\end{theorem}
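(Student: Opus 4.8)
The plan is to prove the slightly more precise statement that, for any constraint order $\varsigma\in\Xi_k$ of length $k$, the level-$k$ relaxation $(\DE^k)$ is at least as tight as the relaxation $(\underline{\DD}^k)$ obtained by running Algorithm~\ref{alg:ddwithmult} on exactly the constraints appearing in $\varsigma$; taking $(1,\dots,k)\in\Xi_k$ then recovers $\opt_{\DE^k}\ge\opt_{\underline{\DD}^k}$ for the basic hierarchy \eqref{eq:DD}. By Remark~\ref{rmk:motivateorderings} and the argument used for Theorem~\ref{thm:simplehierarchy}, running Algorithm~\ref{alg:ddwithmult} on any $k$ of the defining inequalities produces a valid $(\underline{\DD}^k)$-type relaxation of $(\D)$, so fixing such a $\varsigma$ is harmless. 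The whole proof then reduces to showing that the ``$\varsigma$-block'' of $(\DE^k)$—the constraints \eqref{eq:objDEk}, \eqref{eq:DEkyscale}, \eqref{eq:DEknonneg}, together with the instances of the recursion \eqref{eq:DEkmurecurse} for that $\varsigma$—already implies every constraint of $(\underline{\DD}^k)$ once we identify the linearization of $\mu^\varsigma(1;x)$ (resp.\ $y\mu^\varsigma_i(1;x)$) with the free variable $\lambda^\varsigma$ (resp.\ $y^{\varsigma,i}$). All the additional ingredients of \eqref{eq:tighthierarchy}—the other orders in $\Xi_k$, the $y$-scaled recursions \eqref{eq:DEkmuy}, the denominator inequalities \eqref{eq:DEkprodcons}, and the tightenings (disaggregation, positive semidefiniteness) listed after \eqref{eq:tighthierarchy}—only further constrain the feasible set and so can be ignored for the comparison.

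Concretely, I would take an arbitrary feasible point $\zeta$ of the linearized $(\DE^k)$ with objective value $z$, read off from it the values $\lambda^\varsigma:=\mu^{\varsigma[1\cl{}k]}$ and $y^{\varsigma,i}$, and check the three constraint families of $(\underline{\DD}^k)$: nonnegativity $\lambda^\varsigma_i\ge 0$ is literally \eqref{eq:DEknonneg}; the membership $y^{\varsigma,i}\in\lambda^\varsigma_i C$ is literally \eqref{eq:DEkyscale}; and the linear-precision equality $R^\varsigma\lambda^\varsigma=(1;x)$ follows by telescoping \eqref{eq:DEkmurecurse} from $t=k$ down to $t=1$ and then invoking the companion identity \eqref{eq:LkRktoLk+1Rk+1}. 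Indeed, the product of the transformation matrices in \eqref{eq:DEkmurecurse} carries $\lambda^\varsigma=\mu^{\varsigma[1\cl{}k]}$ back to $\mu^0$, which is the genuine (unlinearized) linear function of $x$ fixed at initialization, while the same product left-multiplied by $R^0$ equals $R^\varsigma$ by \eqref{eq:LkRktoLk+1Rk+1}; combining, $R^\varsigma\lambda^\varsigma=R^0\mu^0=(1;x)$. Hence $(x,\lambda^\varsigma,(y^{\varsigma,i})_i)$ is feasible for $(\underline{\DD}^k)$, and by \eqref{eq:objDEk} its $(\underline{\DD}^k)$-objective $\sum_i\ftilde\!\left(R^\varsigma_{[2\cl{}],i}\lambda^\varsigma_i,y^{\varsigma,i},\lambda^\varsigma_i\right)$ is at most $z$. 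Taking the infimum over feasible $\zeta$ gives $\opt_{\underline{\DD}^k}\le\opt_{\DE^k}$, which is the claim.

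The delicate point—and the one I would spend most care on—is this linear-precision step: one has to verify that the recursion \eqref{eq:DEkmurecurse} is imposed for every $t\in\{1,\dots,k\}$ so the telescoping reaches the base level; that at the base level $\mu^0$ is genuinely tied to the variables $x$ (after the Phase-1 Gaussian elimination this reads $\mu^0=Bx_B+Nx_N$ with $R^0=B^{-1}$, and \eqref{eq:changeafterphase2} still delivers $R^\varsigma\lambda^\varsigma=(1;x)$); and that the lineality block is trivial, which is the standing assumption $L^\varsigma=[]$. If instead $L^\varsigma\ne[]$, one carries the $\theta$-variables through unchanged and replaces the linear-precision equality by $R^\varsigma\lambda^\varsigma+L^\varsigma\theta^\varsigma=(1;x)$, exactly as indicated before the theorem, and nothing else in the argument changes. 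Everything else is a line-by-line match between the $\varsigma$-block of \eqref{eq:tighthierarchy} and \eqref{eq:DD}, so once the telescoping is in place the result is immediate, and the upgrade from ``dominates one order'' to ``dominates the basic hierarchy'' is just the choice $(1,\dots,k)\in\Xi_k$.
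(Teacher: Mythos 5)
Your proposal is correct and matches the paper's own proof in approach: both argue that the constraints \eqref{eq:DEknonneg} and \eqref{eq:DEkyscale} are literal copies of \eqref{eq:DDknonneg} and \eqref{eq:DDkyscale}, and that the linear-precision equality \eqref{eq:DDklinprecision} is recovered by telescoping the linearized recursion \eqref{eq:DEkmurecurse} from level $k$ back to the initialization and then invoking \eqref{eq:LkRktoLk+1Rk+1} via Proposition~\ref{prop:expressmulin} together with $L^\varsigma=[]$. You spell out the step-by-step constraint match and the role of Phase~1 more explicitly than the paper, and you use the correct reference \eqref{eq:DEkmurecurse} where the paper's text appears to cite \eqref{eq:DEkmuy}, but the argument is otherwise the same.
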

\begin{proof}
  The result follows since \eqref{eq:tighthierarchy} includes the relationships in \eqref{eq:DD}, except \eqref{eq:DDklinprecision}. However, this equality also follows because
  \begin{equation*}
    \begin{split}
      (1;x) = 
  \mu^{\varsigma[1\cl{}0]} =(P^{\varsigma[1:1]}_\pi)^\intercal \begin{pmatrix}F^{\varsigma[1:1]} & G^{\varsigma[1:1]}\\ 0 & D^{\varsigma[1:1]}\end{pmatrix}\mu^{\varsigma}\cdots (P^\varsigma_\pi)^\intercal \begin{pmatrix}F^{\varsigma} & G^{\varsigma}\\ 0 & D^{\varsigma}\end{pmatrix}\mu^{\varsigma} = R^\varsigma \mu^\varsigma,
    \end{split}
  \end{equation*}
  where the first equality is by initialization of Algorithm~\ref{alg:ddwithmult}, the second equality is by \eqref{eq:DEkmuy}, and the last equality is because of Theorem~\ref{thm:expressmulin} and $L^\varsigma = []$. \qed
\end{proof}

For $\AC$, by multiplying \eqref{eq:DEkprodcons} with $z\ge \sum_{i=1}^{p_k} g\left(\dfrac{y\mu^k_i(1;x)}{\mu^k_i(1;x)}\right)R^k_{\cl{},i} \mu^k_i(1;x)$ and  $y\in C$, we recover the RLT inequalities. These arise naturally in our hierarchy when $d_{\varsigma[1:t],j}(1;x) = 1$, as is the case during Phase 1 of Algorithm~\ref{alg:ddwithmult}.

We mention that the expressions for $\mu^\varsigma$ and $\mu^{\varsigma'}$, where $\varsigma'$ is a permutation of $\varsigma$, often turn out to be the same, especially after pruning as in Remark~\ref{rmk:removeredundant}. This is because rational expressions for barycentric coordinates are unique, when the $V$-description is minimal, as long as the associated polynomials are of sufficiently low degree~\cite{warren2003uniqueness}. These are also the same in another case of interest as discussed in the following result and Remark~\ref{rmk:barycentricproduct}.

  \begin{proposition}\label{prop:barycentricforsums}
    Consider polytopes $P_1$, $P_2$, and $P$ in $\Re^n$ such that $P=P_1+P_2$. For $x\in P_1$, let $x=\sum_{i=1}^{p_1}\gamma_i(x) v^{i,1}$ and, for $x\in P_2$, let $x=\sum_{i=1}^{p_2}\varphi_i(x) v^{i,2}$, where $\gamma_i$ (resp. $\varphi_i$) are barycentric coordinates for $P_1$ (resp. $P_2$) and $(v^{i,1})_{i=1}^{p_1}$ (resp. $(v^{i,2})_{i=1}^{p_2}$) are vertices of $P_{1}$ (resp. $P_{2}$). Then, for $x_1\in P_1$ and $x_2\in P_2$: 
    \begin{equation}\label{eq:barycentricsum}
      x_1 + x_2 =\sum_{i=1}^{p_1}\sum_{j=1}^{p_2}\gamma_i\varphi_{j} \left(v^{i,1}+v^{j,2}\right).
  \end{equation}
  \end{proposition}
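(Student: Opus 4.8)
The plan is to deduce \eqref{eq:barycentricsum} directly from the two defining properties of barycentric coordinates: partition of unity and linear precision, together with nonnegativity. Since $P_1$ and $P_2$ are polytopes, Definition~\ref{defn:barycentric} (applied after homogenizing and then dehomogenizing as in Definition~\ref{defn:dehomogenize}) supplies exactly what is needed: there are no rays or lineality directions, every generator is a vertex, the homogenizing coordinate forces $\sum_{i=1}^{p_1}\gamma_i(x_1)=1$ and $\sum_{j=1}^{p_2}\varphi_j(x_2)=1$, and $\sum_i\gamma_i(x_1)v^{i,1}=x_1$, $\sum_j\varphi_j(x_2)v^{j,2}=x_2$ with $\gamma_i(x_1),\varphi_j(x_2)\ge 0$. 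Here I read the coefficients in \eqref{eq:barycentricsum} as $\gamma_i=\gamma_i(x_1)$ and $\varphi_j=\varphi_j(x_2)$.

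First I would fix $x_1\in P_1$ and $x_2\in P_2$ and expand the right-hand side of \eqref{eq:barycentricsum}, separating the two summands of $v^{i,1}+v^{j,2}$:
\[
\sum_{i=1}^{p_1}\sum_{j=1}^{p_2}\gamma_i\varphi_j\bigl(v^{i,1}+v^{j,2}\bigr)
=\Bigl(\sum_{j=1}^{p_2}\varphi_j\Bigr)\sum_{i=1}^{p_1}\gamma_i v^{i,1}
+\Bigl(\sum_{i=1}^{p_1}\gamma_i\Bigr)\sum_{j=1}^{p_2}\varphi_j v^{j,2}.
\]
By partition of unity the two bracketed sums equal $1$, and by linear precision the remaining sums are $x_1$ and $x_2$, so the right-hand side is $x_1+x_2$, which is \eqref{eq:barycentricsum}.

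Next I would record the accompanying observation that the products $\gamma_i\varphi_j$ are themselves barycentric coordinates for $P=P_1+P_2$ at $x_1+x_2$: they are nonnegative, they sum to $\bigl(\sum_i\gamma_i\bigr)\bigl(\sum_j\varphi_j\bigr)=1$, and \eqref{eq:barycentricsum} is the linear precision identity. To match Definition~\ref{defn:barycentric} one needs $\{v^{i,1}+v^{j,2}\}$ to contain $\extpt(P)$; this is the standard Minkowski-sum fact that a vertex of $P$, being the unique maximizer of some linear functional $c$, equals the sum of the (vertex) maximizers of $c$ over $P_1$ and over $P_2$. One also notes that $x_1+x_2$ ranges over all of $P$ as $(x_1,x_2)$ ranges over $P_1\times P_2$, by definition of the Minkowski sum.

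I do not anticipate a real obstacle: the core identity reduces to the one-line expansion above, and the only point requiring a word of care is the Minkowski-sum vertex fact, which is classical. If the proposition is read as asserting only the displayed identity \eqref{eq:barycentricsum}, the proof is complete after the first two paragraphs.
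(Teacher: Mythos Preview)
Your proposal is correct and follows essentially the same approach as the paper: expand the double sum by splitting $v^{i,1}+v^{j,2}$, then use partition of unity $\sum_j\varphi_j=\sum_i\gamma_i=1$ and linear precision to recover $x_1+x_2$. The additional paragraph about $\gamma_i\varphi_j$ being barycentric coordinates for $P_1+P_2$ goes beyond what the proposition asserts (and anticipates Remark~\ref{rmk:barycentricproduct}), but is a harmless extra.
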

  \long\def\propbarycentricforsumsproof{
    The result follows since 
    \begin{equation*}
      \begin{split}
      \sum_{i=1}^{p_1}\sum_{j=1}^{p_2}\gamma_i(x_1)\varphi_{j}(x_2) \left(v^{i,1}+v^{j,2}\right) = \sum_{i=1}^{p_1} \gamma_i(x_1) v^{i,1} \sum_{j=1}^{p_2}\varphi_j(x_2) \\
      + \sum_{i=1}^{p_1} \gamma_i(x_1) \sum_{j=1}^{p_2}\varphi_j(x_2) v^{j,2} = x_1+x_2,
      \end{split}
    \end{equation*}
    where the last equality is because $\sum_{j=1}^{p_1}\gamma_j(x_1) = 1$ and $\sum_{j=1}^{p_2}\varphi_j(x_2) = 1$.
  }
  \iftoggle{allproofsinpaper}{%
  \begin{proof}
    \propbarycentricforsumsproof\qed
  \end{proof}}{}

  \begin{remark}\label{rmk:barycentricproduct}
  A special case of Proposition~\ref{prop:barycentricforsums} is when $P=P_1\times P_2$, where $P_i\in \Re^{n_i}$. The barycentric coordinates for $P_1\times P_2$ are then $\gamma(x_1)\varphi(x_2)$. This implies that Algorithm~\ref{alg:ddwithmult} can be run in parallel for $P_1$ and $P_2$. 
  Since barycentric coordinates for $[0,1]$ are $\left(x;1-x\right)$, we obtain the barycentric coordinates for $[0,1]^n$ as in Proposition~\ref{prop:DDhypercube} which are used as product factors in $(\ACRLT)$. \qed
  \end{remark}

  It follows easily that the convex hull of barycentric coordinates $\mu(1;x)$ over the corresponding polytope $P$ is a simplex. 
  
  \begin{proposition}\label{prop:convexhullmu}
    Let $(v_i)_{i=1}^p$ be vertices of a polytope $P\in \Re^n$. Assume that $\left(\mu_i(x)\right)_{i=1}^p$ are the barycentric coordinates associated with the vertices of $P$. Let $X=\left\{\left(\mu_i(x)\right)_{i=1}^p\;\middle|\; x\in P\right\}$. Then, 
    \begin{equation}\label{eq:convhullmu}
      \conv(X) = \left\{\mu\;\middle|\; \sum_{i=1}^p \mu_i = 1, \mu_i\ge 0, i=1,\ldots,p\right\}.
    \end{equation}
  \end{proposition}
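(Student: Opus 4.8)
The plan is to prove the two inclusions in \eqref{eq:convhullmu} separately, reducing the nontrivial direction to the statement that every unit vector $e_{p;i}$ belongs to $X$. Write $\mu(x):=\mu^k(1;x)$ for the dehomogenized barycentric coordinates and recall that, after dehomogenization (and after pruning redundant rays as in Remark~\ref{rmk:removeredundant}, which leaves the $V$-description with exactly the $p$ vertices), the matrix $R^k$ has first row $e_{p;:}^\intercal$ and remaining rows listing $v_1,\dots,v_p$ as columns.

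For $\conv(X)\subseteq\Lambda_p$ (the right-hand side of \eqref{eq:convhullmu}): by Proposition~\ref{prop:correctmutheta}, equivalently \eqref{eq:pointrep}, for every $x\in P$ we have $\mu(x)\ge 0$ and $R^k\mu(x)=(1;x)$; reading off the first coordinate gives $\sum_{i=1}^p\mu_i(x)=1$. Hence $X\subseteq\Lambda_p$, and since $\Lambda_p$ is convex, $\conv(X)\subseteq\Lambda_p$. For the reverse inclusion it suffices to show $e_{p;i}\in X$ for each $i$, because $\Lambda_p=\conv\{e_{p;i}\mid i=1,\dots,p\}$. I would evaluate the coordinates at the vertex $v_i$: by the previous paragraph $\mu(v_i)\in\Lambda_p$ and $\sum_{j=1}^p\mu_j(v_i)v_j=v_i$, so $v_i$ is written as a convex combination of the (distinct) vertices $v_1,\dots,v_p$. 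Since $v_i$ is an extreme point of $P$, this combination is trivial: $\mu_j(v_i)=0$ for $j\neq i$ and $\mu_i(v_i)=1$, i.e.\ $\mu(v_i)=e_{p;i}\in X$. (Alternatively, Proposition~\ref{prop:positivemu} applied to the $0$-dimensional face $\{v_i\}$ shows $\mu_i$ is strictly positive there, and being one of a collection of nonnegative numbers summing to $1$ it must equal $1$, forcing the others to vanish.) Combining the two inclusions yields \eqref{eq:convhullmu}.

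The only delicate point is the evaluation of the rational functions $\mu_j$ at the vertices, since their denominators vanish on certain faces of $P$. This causes no difficulty here: I only use the identity $R^k\mu(x)=(1;x)$ with $\mu(x)\ge 0$, which Proposition~\ref{prop:correctmutheta} asserts at \emph{every} point of $K^k$ (its proof relies on the numerically valid affine relations of Proposition~\ref{prop:expressmulin} rather than on nonvanishing of denominators), and extremality of $v_i$ supplies the rest; no cancellation-of-denominators argument is needed beyond what is already established. I expect this to be the main (minor) obstacle to writing the argument cleanly. Finally, $\conv(X)$ is automatically closed here, as it is squeezed between $\Lambda_p$ and $\conv\{\mu(v_i)\}_{i=1}^p=\Lambda_p$, so no separate closedness argument is required.
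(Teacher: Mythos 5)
Your proof is correct and follows essentially the same route as the paper's: both show $X\subseteq\Lambda_p$ via the linear-precision equality and nonnegativity, and establish $\Lambda_p\subseteq\conv(X)$ by noting $e_{p;i}\in X$ for every $i$. The only difference is that the paper simply asserts $\mu(v_i)=e_{p;i}$, whereas you justify it via the extremality of $v_i$ (or via Proposition~\ref{prop:positivemu}); that is a reasonable bit of detail to fill in, not a different argument.
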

  \long\def\propconvexhullproof{
    First, observe that $e_{p;i}\in X$ for $i=1,\ldots,p$. This is because $e_{p;i} = \left(\mu_i(v_j)\right)_{j=1}^p$. Let $S$ denote the simplex on the right-hand-side of \eqref{eq:convhullmu}. Then, it follows that $\conv(X) \supseteq S$. To see the reverse inclusion, observe that $\mu(x)$ satisfy all the inequalities defining $S$.
  }
  \iftoggle{allproofsinpaper}{%
  \begin{proof}
    \propconvexhullproof\qed
  \end{proof}}{}

  \begin{remark}\label{rmk:overmu}
  Consider the following variant of $(\D)$, where the feasible region is: 
  \begin{equation}\label{eq:muxfeasible}
    \left\{(\mu,x,y)\in \Re^p\times\Re^n\;\middle|\; y\in C, (\mu,x)\in X'\right\},
  \end{equation}
  with $X'=\{(\mu,x)\mid \mu = \mu(1;x), x\in P\}$. We minimize $f(\mu,x,y)$, which is concave in $(\mu,x)$ for a fixed $y\in C$ and convex in $y$ for a fixed $(\mu,x)\in \conv(X')$. 
  Then, by Proposition~\ref{prop:convexhullmu} and because an optimal solution for $(\D)$ is such that $(\mu,x)$ is extremal, we may relax $X'$ with its convex hull in \eqref{eq:muxfeasible}. Since $x$ is affinely related to $\mu$ via $(1;x) = R\mu$ and because affine transformations commute with convexification, it follows from Proposition~\ref{prop:convexhullmu} that $\conv(X') = \{(\mu,x)\mid \mu\in \conv(X), (1;x) = R\mu\}$. In other words, our hierarchy allows construction of convex envelope of $f(\mu,x,y)$ over $X'\times C$. Specifically, this shows that our hierarchy also applies to the case where $P=P_1\times\cdots\times P_{n_p}$, $x_i\in P_i\subseteq \Re^{n_i}$, and the objective depends on multilinear expressions of $(x_i)_{i=1}^{n_p}$ since we can minimize $f(\mu,y)$ and $\prod_{i=1}^{n_p} x_{i\ell_i}$ for $\ell_i\in \{\cl{}n_i\}$ is expressible as an affine transform of $\mu$ by Remark~\ref{rmk:barycentricproduct} and Theorem~\ref{thm:expressmulin}. \qed
  \end{remark}

  We have focused our attention on lifting $P$ to the nonlinear set $X'$ in Remark~\ref{rmk:overmu}. However, the same construction also applies to $P^y$ in $(\DBP)$ so that our Algorithm~\ref{alg:ddwithmult} can be used to lift $P^y$ as well, thereby identifying additional valid inequalities for $(\DBP)$. Since, by Remark~\ref{rmk:barycentricproduct}, barycentric coordinates for $P\times P^y$ are products of those for $P$ and $P^y$, $(\DBP)$ optimizes a linear function of the barycentric coordinates for $P\times P^y$. Even if $C$ is not polyhedral, tighter relaxations for $(\D)$ and $(\AC)$ can be obtained by outer-approximating $C$ with a polytope $P^y$ and using our lifting procedure on $P^y$ besides $P$. 

  \section{Facial Disjunctive Programs}\label{sec:fdp}

  In this section, we develop a hierarchy that terminates finitely with the convex hull of $\FD$. The hierarchy is first constructed using disjunctive programming arguments and then additional algebraic relations are developed using barycentric coordinates. 
  Let $\gamma_{ij}$ be the indicator of $F_{ij}$. For any $S \subseteq \{1,\ldots,n_p\}$ and any selection vector 
  $s=(s_i)_{i\in S}$ with $s_i\in \{\cl{}j_i\}$, define
  \begin{equation*}
  F^{S,s} = \left\{x\;\middle|\; 
  \begin{aligned}
  x_i\in F_{is_i} &\text{ for } i \in S\\
  x_i\in \bigcup_{j=1}^{j_i} F_{ij} &\text{ for } i \notin S\\
  \end{aligned}\right\}.
  \end{equation*}
  For example, 
  $F^{\emptyset,()} = \{x\mid x_i \in \bigcup_{j=1}^{j_i} F_{ij}\}$.

  \begin{assumption}\label{assume:vertexPartition}
  $F_{ij}$ and $F_{ij'}$ do not share vertices when $j\ne j'$.
  \end{assumption}

  \begin{theorem}\label{thm:FDPhierarchy}
    Let $J^S$ denote $\prod_{i\in S}\{\cl{}j_i\}$. Let $(\tau^{ij}, \pi^{ij})\in \Re^{n_i+1}$ be such that $\tau^{ij} - (\pi^{ij})^\intercal x_i \ge 0$ is valid for $P_i$ but $F_{ij} = \{x_i\mid x_i \in P_i, \tau^{ij} - (\pi^{ij})^\intercal x_i \le 0\}$. The set $\FDR^k$ below defines the level $k$ nonlinear relaxation of $\FD$:
    \begin{subequations}\label{eq:FDPhierarchy}
      \begin{alignat}{3}
        &\gamma^{S,s} \left(\begin{pmatrix}-c & B\end{pmatrix}(1;x) + Dy\right) \le_{\bar{K}} 0, &\quad&\forall S \subseteq \{\cl{}n_p\},\; |S|=k,\;s\in J^S  \label{eq:fdpconeconstraint}\\
        &\sum_{s \in J^S} \gamma^{S,s} (1;x;y) = (1;x;y)&&\forall\, S \subseteq \{\cl{}n_p\},\; |S|=k\label{eq:fdpsumconstraint}\\
        &\gamma^{S,s}x_{i'}\in \gamma^{S,s}P_{i'} &&
        \forall S \subseteq \{\cl{}n_p\},\; |S|=k,\;s\in J^S,\;i'\in[\cl{}n_p] \label{eq:fdpscaleP}\\
        &\gamma^{S,s}\bigl(\tau^{is_i} - (\pi^{is_i})^\intercal x_i\bigr)\le 0&&\forall i\in S \subseteq \{\cl{}n_p\},\; |S|=k,\;s\in J^S \label{eq:fdpfacedefinition}\\
        &\gamma^{S,s}\ge 0 &&\forall S \subseteq \{\cl{}n_p\},\; |S|=k,\;s\in J^S,
      \end{alignat}
    \end{subequations}
  With Assumption~\ref{assume:vertexPartition}, $\gamma^{S,s}$ can be restricted to be the indicator of $F^{S,s}$. Let the linear relaxation obtained by substituting $\gamma^{S,s}x$ with $u^{S,s}$ and $\gamma^{S,s}y$ with $w^{S,s}$ be denoted as $\underline{\FDR}^k$. Then, $\underline{\FDR}^1\supseteq \cdots \supseteq \underline{\FDR}^{n_p} = \conv(\FD)$. 
  \end{theorem}
  \begin{proof}
  We show that \eqref{eq:FDPhierarchy} is a valid relaxation when $\gamma^{S,s}$ is defined as $\prod_{i\in S}\gamma_{is_i}$ so that $\gamma^{S,s}$ is the indicator of $F^{S,s}$. For each $x_i$ that belongs to multiple faces, $F_{ij}$, $j\in J_i$, the indicator for the smallest indexed face in $J_i$ as $1$ while the rest are set to $0$. With Assumption~\ref{assume:vertexPartition}, this qualification is not needed since each $x_i$ belongs to a unique face. If not, the smallest face containing $x_i$ is a subset of each $F_{ij}$, $j\in J_i$, contradicting that faces do not share vertices.
  
  First, \eqref{eq:fdpsumconstraint} follows since $x\in F^{\emptyset,()}$ implies by Assumption~\ref{assume:vertexPartition} that for each $S$, there is exactly one $s\in J^S$ such that $\gamma^{S,s} = 1$. Second, \eqref{eq:fdpfacedefinition} follows because $\gamma^{S,s}=1$ implies $x_i\in F^{S,s}$ and $\tau^{is_i} - (\pi^{is_i})^\intercal x_i\le 0$. The rest of the inequalities follow since $\gamma^{S,s}$ is non-negative and multiplied with one of the inequalities valid for $P_1\times\cdots\times P_{n_p}$. Now, we show that the $\underline{\FDR}^k\supseteq \underline{\FDR}^{k+1}$ for $k < n_p$. To see this, let $S\subseteq \{\cl{}n_p\}$ such that $|S| = k$. Choose $\ell\not\in S$. For a given $s = (s_i)_{i\in S}$ and $\psi \in \{\cl{}j_\ell\}$, define $\bar{s}(\psi)$ as follows:
  \begin{equation*}
  \bar{s}(\psi) = \begin{cases}
     s_i & i \in S\\
     \psi & i = \ell.
    \end{cases}
  \end{equation*}
  Define 
  \begin{equation}\label{eq:fdpsumoverell}
    (\gamma^{S,s},u^{S,s}, w^{S,s}) = \sum_{\psi \in \{\cl{}j_\ell\}} \left(\gamma^{S\cup\{\ell\},\bar{s}(\psi)},u^{S\cup\{\ell\},\bar{s}(\psi)},w^{S\cup\{\ell\},\bar{s}(\psi)}\right).
  \end{equation} 
  The proposed solution satisfies each constraint in $\underline{\FDR}^k$ since a constraint with $S\subseteq\{\cl{}n_p\}$ and $s\in J^S$ is an aggregation of the constraints in $\underline{\FDR}^{k+1}$ for different $\psi$ in $\{\cl{}j_\ell\}$, with subset $S\cup\{\ell\}$ and selection vector $\bar{s}(\psi) \in J^{S\cup\ell}$. Finally, we show that $\underline{\FDR}^{n_p} = \conv(\FD)$. Consider a feasible solution $(\gamma, u, w)$ to $\underline{\FDR}^{n_p}$. By \eqref{eq:fdpscaleP}, \eqref{eq:fdpfacedefinition}, and that $(\tau^{ij}, \pi^{ij})$ defines $F_{ij}$, for any $s\in J^{\{\cl{}n_p\}}$, it follows that $u^{\{\cl{}n_p\},s}\in \gamma^{\{\cl{}n_p\},s}\prod_{i=1}^{n_p} F_{is_i}$. Since each $P_i$ is bounded, so is $\prod_{i=1}^{n_p} F_{is_i}$. When $\gamma^{\{\cl{}n_p\},s} = 0$, we have $u^{\{\cl{}n_p\},s} = 0$, since $\prod_{i=1}^{n_p} F_{is_i}$ has no recession directions. If $\gamma^{\{\cl{}n_p\},s} > 0$, using \eqref{eq:fdpconeconstraint}, we have that $(u^{\{\cl{}n_p\},s},w^{\{\cl{}n_p\},s})\in \gamma^{\{\cl{}n_p\},s} \FD$. Otherwise, $Dw^{\{\cl{}n_p\},s} \in \bar{K}$. In other words, $(0, w^{\{\cl{}n_p\},s})$ belongs to the recession cone of $\FD\cap F^{\{\cl{}n_p\},s}$. Since all disjunctive sets $\FD\cap F^{\{\cl{}n_p\},s}$ for $s\in J^{\{\cl{}n_p\}}$ share the same recession directions, it follows by disjunctive programming that $\underline{\FDR}^{n_p} = \conv(\FD)$ \cite[Corollary 9.8.1]{Rockafella1970}. \qed
  \end{proof}

  Recall that FDP generalizes 0-1 programs where $j_i=2$, $F_{i1} = \{x_i\mid x_i\in P_i, x_i=0\}$, and $F_{j2} = \{x_i\mid x_i\in P_i, x_i = 1\}$. Moreover, $(\tau_{i1}, \pi^{i1}) = (0,-1)$ and $(\tau_{i2},\pi^{i2}) = (1,1)$ and Assumption~\ref{assume:vertexPartition} is satisfied. Then, \eqref{eq:fdpfacedefinition} reduces to:
  \begin{equation*}
  x^{S,S'}x_i \le 0\; \forall i\in S' \text{ and } x^{S,S'}(1-x_i)\le 0\; \forall i\in S,
  \end{equation*}
  which imply each of these quantities is $0$ since $x^{S,S'}x_i\ge 0$ and $x^{S,S'}(1-x_i)\ge 0$ follow from \eqref{eq:fdpscaleP}. Specifically, \eqref{eq:fdpfacedefinition} and \eqref{eq:fdpscaleP} imply the equalities $x_i^2 = x_i$ used in RLT relaxations. Nevertheless, $\underline{\FDR}^k$ relaxation obtained by linearizing \eqref{eq:FDPhierarchy} is not as tight as the RLT relaxation for 0-1 programs. To see this consider Equation \eqref{eq:fdpsumoverell}. Utilizing that $(u^{T,t}, w^{T,t})$ linearizes $\gamma^{T,t}(x^{T,t},y^{T,t})$, \eqref{eq:fdpsumoverell} follows by restricting $\gamma^{T,t}$, for all $(T,t)$, to represent the indicator of $F^{T,t}$. 
  Since the left-hand-side of \eqref{eq:fdpsumoverell} is independent of $\ell$, the right-hand-side is invariant with $\ell$ as well.  For 0-1 programs, setting $S=\{1\}$ and $\ell=i$, the following provides an example of \eqref{eq:fdpsumoverell}: 
  \begin{equation*}
  (x_1,x_1x,x_1y) = (x_ix_1, x_ix_1x, x_ix_1y) + \bigl((1-x_i)x_1,(1-x_i)x_1x, (1-x_i)x_1y\bigr).
  \end{equation*}
  Algebraic calculation shows that the right-hand-side of the above expression is independent of $i$, and this relation is implicit in RLT but not in \eqref{eq:FDPhierarchy}. The discrepancy arises in part because, unlike RLT, \eqref{eq:FDPhierarchy} does not express $\gamma^{S,s}$ as functions of $x$. We address this issue next. 

  Let $P_i$ have $p_i$ vertices, $V_{ir_i}$ be the $r_i^{\text{th}}$ vertex of $P_i$, and $\lambda_{ir_i}(x)$ be barycentric coordinate associated with $V_{ir_i}$. For each face $F_{ij}$, let $E_i(j)$ represent the index set of vertices of $P_i$ that are contained in $F_{ij}$. Therefore, by Assumption~\ref{assume:vertexPartition}, for each $r$ there is at most one $j$ such that $r\in E_i(j)$.

  Now, we define the barycentric indicator function associated with a face as the sum of barycentric coordinates of the vertices contained in the face.
  
  \begin{definition}\label{defn:barycentricIndicator}
     For each $i\in \{\cl{}n_p\}$ and $j\in \{\cl{}j_i\}$, we define \begin{equation*}
     \eta_{ij}(x_i) = \sum_{r_i\in E_i(j)} \lambda_{ir_i}(x_i)
     \end{equation*} to be the \emph{barycentric indicator function} of $F_{ij}$. For $S\subseteq\{\cl{}n_p\}$ and $s = (s_i)_{i\in S}$, we define 
     \begin{equation*}
     \eta^{S,s}(x) = \prod_{i\in S} \eta_{is_i}(x_i)
     \end{equation*} to be the \emph{barycentric indicator} associated with $F^{S,s}$.
  \end{definition}
    
We motivate the above definition next. By Assumption~\ref{assume:vertexPartition}, $\bigl(E_i(j)\bigr)_{j\in \{\cl{}j_i\}}$ are disjoint and $F_{ij} = \conv\bigl(\bigcup_{r_i\in E_i(j)} V_{ir_i}\bigr)$. It follows easily that $\eta_{ij}(x_i)$ is $1$ when $x_i\in F_{ij}$ and $0$ if $x_i\in F_{ij'}$ for any $j'\ne j$. (We recall that the barycentric coordinate associated with points outside the affine hull of $P_i$, \textit{i.e.} points in $N^-$ when $N^+=\emptyset$ at Step~\ref{algstep:definemu2} in Algorithm~\ref{alg:ddwithmult}, are forced to be zero.) In other words, $\eta_{ij}(x_i)$ acts as an indicator function of $F_{ij}$ for feasible $x$ even though it is strictly positive for $x$ in the relative interior of $P_i$. By Remark~\ref{rmk:barycentricproduct}, the barycentric coordinates of $P=P_1\times\cdots\times P_{n_p}$ are given by $\prod_{r_i\in\{\cl{}p_i\}}\lambda_{ir_i}$. Then, the barycentric $\eta^{S,s}$ is similarly the sum of the barycentric coordinates associated with all the vertices of $P$ in $F^{S,s}$. It can be written in terms of barycentric coordinates as:
  \begin{equation}\label{eq:rewriteeta}
     \eta^{S,s}(x) = \prod_{i\in S} \sum_{r_i\in E_i(s_i)} \lambda_{ir_i}(x_i)= \sum_{r\in \prod_{i\in S} E_i(s_i)}\prod_{i\in S}\lambda_{ir_i}(x).
  \end{equation} 
  \begin{proposition}\label{prop:fdpnonlin}
  Let $\eta_{S,s}(x)$ be the barycentric indicator associated with face $F^{S,s}$ of $P$. If Assumption~\ref{assume:vertexPartition} holds, substituting $\eta^{S,s}(x)$ for $\gamma^{S,s}$ in \eqref{eq:FDPhierarchy} gives a nonlinear relaxation of $\FD$. Given $r\in \prod_{i\in S}\{\cl{}p_i\}$, let 
  \begin{equation*}
    \chi_S(i,r,x) = \begin{cases}
    V_{ir_i} & i\in S\\
    x_i & \text{otherwise.}
    \end{cases}
  \end{equation*}
  Then, $\forall S \subseteq \{\cl{}n_p\}$, $|S|=k$, $s\in J^S := \prod_{i\in S}\{\cl{}j_i\}$, we may write
  \begin{equation}\label{eq:uformulation}
    u^{S,s} = 
  \sum_{r\in \prod_{i\in S} E_i(s_i)}\left(\prod_{i\in S}\lambda_{ir_i}(x_i) 
    \begin{bmatrix}
      1\\
      \chi_S(1,r,x)\\
      \vdots\\
      \chi_S(n_p,r,x)
    \end{bmatrix}\right).
  \end{equation}
  In particular, for a given $r\in \prod_{i\in S} \{\cl{}p_i\}$, 
  \begin{equation}\label{eq:simplifyprod}
    \eta^{S,s}(x)\prod_{i\in S}\lambda_{ir_i}(x_i) = 
    \begin{cases}\prod_{i\in S}\lambda_{ir_i}(x_i) & \text{ if } r_i\in E_i(s_i) \text{ for all } i\in \{\cl{}n_p\}\\
      0 & \text{otherwise.}
    \end{cases}
  \end{equation} 
  \end{proposition}
  \begin{proof}
  If $x\in F^{S,s}$, it follows $\sum_{r_i\in E_i(s_i)}\lambda_{ir_i}(x_i) = 1$ for all $i\in S$. Therefore, by Equation~\eqref{eq:rewriteeta}, $\eta^{S,s}(x)= 1$. If $x\in \FD\backslash F^{S,s}$, then there exists an $i\in S$ such that $x_i\in F_{ij}$ with $j\ne s_i$. Therefore, $\sum_{r_i\in E_i(s_i)} \lambda_{ir_i(x_i)} = 0$ since, by Assumption~\ref{assume:vertexPartition}, $F_{ij}$ and $F_{is_i}$ do not share any vertices. It follows that for $x\in \FD$, $\eta^{S,s}(x)$ is an indicator of $F^{S,s}$. Therefore, by Theorem~\ref{thm:FDPhierarchy}, it follows that $\gamma^{S,s}$ may be replaced by $\eta^{S,s}(x)$ in \eqref{eq:FDPhierarchy}. 
  
  Now, we show \eqref{eq:simplifyprod}. For $i\in S$, if $r_i\in \{\cl{}p_i\}\backslash E_i(s_i)$ we have $\eta^{S,s}(x)\lambda_{ir_i}(x) = 0$. This follows for $x_i\in F_{is_i}$ by realizing that $\lambda_{ir_i}(x_i)=0$ and for $x_i\not\in F_{is_i}$ by utilizing $\eta^{S,s}(x) = 0$. Now, assume that $r_i\in E_i(s_i)$ for all $i\in \{\cl{}n_p\}$. If $x\in F^{S,s}$, $\eta^{S,s}(x)=1$ and \eqref{eq:simplifyprod} follows. If $x\in \FD\backslash F^{S,s}$, then there exists an $i$ such that $x_i\in F_{ij}$ for $j\ne s_i$. By Assumption~\ref{assume:vertexPartition}, $r_i\not\in E_i(j)$ since $r_i\in E_i(s_i)$. Therefore, $\lambda_{ir_i} = 0$ and \eqref{eq:simplifyprod} is satisfied.

  Finally, \eqref{eq:uformulation} follows by substituting $\eta^{S,s}(x)$ for $\gamma^{S,s}$ in $u^{S,s} = \gamma^{S,s}x$, expanding $x$ using the barycentric coordinates as follows:
  \begin{equation*}
  x = \sum_{r\in \prod_{i\in S} \{\cl{}p_i\}}\left(\prod_{i\in S}\lambda_{ir_i}(x_i) 
    \begin{bmatrix}
      1\\
      \chi_S(1,r,x)\\
      \vdots\\
      \chi_S(n_p,r,x)
    \end{bmatrix}\right),
  \end{equation*}
  and using \eqref{eq:simplifyprod} for simplification. \qed
  \end{proof}
  
  In the following, we compute the barycentric indicators for 0-1 programs and find them to be the RLT product-factors. We then specialize Proposition~\ref{prop:fdpnonlin} and relate it to how products are simplified in RLT relaxations.
  \begin{remark}\label{rmk:fdpto0-1}
  For 0-1 programs, we write $[0,1]^{n_p}$ as $P=P_1\times\cdots\times P_{n_p}$ where $P_i = \{x_i\mid 0\le x_i\le 1\}$. Then, $p_i = 2$, $V_{i1} = \{0\}$, $V_{i2} = \{1\}$, $\lambda_{i1}(x_i) = 1-x_i$ and $\lambda_{i2}(x_i) = x_i$. Consider the product factor $x^{S',S''}$ such that $S'\cap S''=\emptyset$. Define $S=S'\cup S''$ and $s = (s_i)_{i\in S}$, where $s_i = \{2\}$ if $i\in S'$ and $s_i=\{1\}$ if $i\in S''$. Using \eqref{eq:rewriteeta}, we have $\eta^{S,s}(x) = x^{S',S''}$. Therefore, $\eta^{S,s}(x)$ are the product-factors used in RLT for 0-1 programs. In Theorem~\ref{thm:FDPhierarchy}, we defined $u^{S,s}$ as the linearization of $\eta^{S,s}(x)x$. Using $\eta^{S,s}(x) = x^{S',S''}$, $\chi_S(i,r,S)$ in Proposition~\ref{prop:fdpnonlin} is $1$ if $i\in S'$, $0$ if $i\in S''$ and $x_i$ otherwise. Therefore, as in \eqref{eq:uformulation}, we have $\eta^{S,s}(x)x_i = \eta^{S,s}(x)$ if $i\in S'$, $0$ if $i\in S''$, and $x^{S'\cup\{i\},S''}$ otherwise. These substitutions align with RLT where $x^{S',S''}x_i$ is substituted with $x^{S',S''}$ if $i\in S'$, $0$ if $i\in S'$ and $x^{S'\cup\{i\},S''}$ otherwise. Finally, \eqref{eq:simplifyprod} reduces to $x^{S',S''}x^{\bar{S},S\backslash \bar{S}} = x^{\bar{S},S\backslash \bar{S}}$ if $\bar{S} = S'$ and $0$ otherwise. \qed
  \end{remark}
  
  Remark~\ref{rmk:fdpto0-1} highlights the use of nonlinear expressions for $\gamma^{S,s}$ based on barycentric coordinates. To see this, consider $\gamma^{\{1\},2}x_2 = x_1x_2 = x_2x_1 = \gamma^{\{2\},2}x_1$, an equivalence that is otherwise not apparent. The barycentric coordinates again provide a unifying framework for different relaxation hierarchies.

\section{Conclusions}

In this paper, we develop new relaxation hierarchies for concavo-convex programs, which include affine-convex programs and concave minimization problems as special cases. These relaxations improve upon both disjunctive programming and reformulation-linearization-based hierarchies, and are derived using a constructive approach based on the double description method. We provide an algebraic characterization of barycentric coordinates for polyhedral cones as rational functions of constraint products, enabling the use of inequalities from polynomial and fractional optimization literature to derive tighter relaxations. In summary, the paper develops a common theoretical framework to analyze different relaxation hierarchies for these problem classes.

For prominent cases where RLT relaxations terminate finitely, we recover the corresponding hierarchy using barycentric coordinates. This work demystifies the role of RLT product-factors for constructing relaxations by formally relating them to barycentric coordinates. The techniques introduced in the paper provide a geometric-algebraic bridge connecting DP and RLT for continuous optimization problems. Moreover, Algorithm~\ref{alg:ddwithmult} provides a symbolic iterative procedure for computing barycentric coordinates, addressing a long-standing challenge in computational geometry.

While we leave the computational exploration of the hierarchies for different problem classes to future work, we discuss several challenges and opportunities briefly. Although the hierarchy terminates finitely, the first challenge is that the degree of the resulting expressions can grow rapidly (see Proposition~\ref{prop:mustructure}). A successful implementation will require careful selection of inequalities to execute Algorithm~\ref{alg:ddwithmult} while balancing the associated tradeoffs. Acceleration techniques for double description may provide useful insights in this regard~\cite{Fukuda1995,padberg2013linear}. Second, we showed that by processing just a few constraints, inequalities that are not seen in RLT relaxations can be derived. These ideas can be extended to analyze simple structured sets.
As discussed in Corollary~\ref{cor:allorders} and Remark~\ref{rmk:singleineqRLT} when constraints are processed in different orders simultaneously, the algebraic relationships between the barycentric coordinates provide opportunities for strengthening the relaxations just as RLT strengthens lift-and-project relaxations. However, the computational burden increases as well since the number of constraint orders grows exponentially with the number of constraints. The technique detailed in Theorem~\ref{thm:representationresult} can be used after projecting polytopes $P$ and $P^y$ to small dimensions to derive algebraic proofs of validity. Finally, related work \cite{oh2022algorithms,oh2024branchandbound} has leveraged insights from this paper, and specifically the double-description procedure to design a simplicial branch-and-bound algorithm for disjoint bilinear programs that terminates finitely.

\section*{Acknowledgements}
This work was supported by Air Force Office of Scientific Research via grant \#FA9550-22-1-0069. The author would like to thank the anonymous reviewers for their constructive feedback.

\iftoggle{createarxiv}{
  \bibliographystyle{plainnat}
}{\bibliographystyle{spmpsci}}
\iftoggle{createarxiv}{
\bibliography{finitehierarchy_R1}
}{\bibliography{DD}}

\iftoggle{createappendix}{
\footnotesize
\section*{Appendix}
}{}

\iftoggle{allproofsinpaper}{}{%
\subsection{Proof of Lemma~\ref{lemma:disjVrep}}
\disjstandardsimplex
}

\iftoggle{ancilliarydiscussions}{%
\subsection{Brief Description of DD algorithm}\label{app:ddalgorithm}
\ddalgorithm
}{}

\iftoggle{allproofsinpaper}{}{%
\subsection{Proof of Proposition~\ref{prop:DDhypercube}}
\propDDhypercubeproof
}

\iftoggle{allproofsinpaper}{}{%
\subsection{Proof of Proposition~\ref{prop:rationalfunction}}
\proprationalfunctionproof
}

\iftoggle{allproofsinpaper}{}{%
\subsection{Barycentric coordinates in Example~\ref{ex:nontrivialbarycentric}}\label{app:showremainingmu}
\showallmuexpr
}

\iftoggle{allproofsinpaper}{}{
\subsection{Proof of Proposition~\ref{prop:correctmutheta}}
\propcorrectmuthetaproof
}

\iftoggle{allproofsinpaper}{}{%
\subsection{Proof of Lemma~\ref{lem:RpointedWithL}}
\lemRpointedWithLproof
}

\iftoggle{allproofsinpaper}{}{%
\subsection{Proof of Proposition~\ref{prop:twoineq}}
\proptwoineqproof
}

\iftoggle{allproofsinpaper}{}{%
\subsection{Proof of Proposition~\ref{prop:mustructure}}
\propmustructureproof
}

\iftoggle{allproofsinpaper}{}{
\subsection{Proof of Proposition~\ref{prop:barycentricforsums}}
\propbarycentricforsumsproof
}

\iftoggle{allproofsinpaper}{}{
\subsection{Proof of Proposition~\ref{prop:convexhullmu}}
\propconvexhullproof
}

\end{document}